\newtheorem{theorem}{Theorem}
\newtheorem{lemma}[theorem]{Lemma}
\newtheorem{corollary}[theorem]{Corollary}
\theoremstyle{definition}
\newtheorem{definition}{Definition}
\newtheorem{example}{Example}
\newtheorem{prop}[theorem]{Proposition}
\theoremstyle{remark}
\newtheorem{remark}{Remark}
\numberwithin{equation}{section}
\begin{document}

\title[\tiny{\textbf{\textsf{Minimal surface systems, maximal surface systems
and special Lagrangian equations}}}]{Minimal surface systems, maximal surface systems
and special Lagrangian equations}

\author{Hojoo Lee}
\address{Department of Geometry and Topology, University of Granada, Granada, Spain}
\email{ultrametric@gmail.com}
\thanks{This work was supported by the National Research Foundation of Korea Grant funded
 by the Korean Government (Ministry of Education, Science and Technology) [NRF-2011-357-C00007] and
 in part by 2010 Korea-France STAR Program.}

\subjclass[2000]{Primary 49Q05, 35J47, 35B08, 53D12} 

\keywords{minimal submanifold, special Lagrangian equation, entire solution}

\begin{abstract}
  We extend Calabi's correspondence between minimal graphs in  Euclidean space ${\mathbb{R}}^{3}$ and maximal graphs in
 Lorentz-Minkowski space ${\mathbb{L}}^{3}$. We establish the  twin correspondence  between
 $2$-dimensional minimal graphs in Euclidean space ${\mathbb{R}}^{n+2}$ carrying a positive area-angle function and $2$-dimensional maximal graphs  in pseudo-Euclidean space  ${\mathbb{R}}^{n+2}_{n}$ carrying the same positive area-angle function.

 We generalize Osserman's Lemma on degenerate Gauss maps  of entire $2$-dimensional minimal graphs
 in ${\mathbb{R}}^{n+2}$ and offer several Bernstein-Calabi type theorems. A simultaneous application of  Harvey-Lawson Theorem
 on special Lagrangian equation and our extended Osserman's Lemma yields a geometric proof of J\"{o}rgens' Theorem on
 $2$-variables unimodular Hessian equation.

 We introduce the correspondence from $2$-dimensional minimal graphs in ${\mathbb{R}}^{n+2}$ to special Lagrangian
 graphs in ${\mathbb{C}}^{2}$, which induces an explicit correspondence from $2$-variables symplectic Monge-Amp\'{e}re 
 equations to $2$-variables unimodular Hessian equation.

\end{abstract}

\maketitle

 \tableofcontents

\section{Motivation and main results}

   The point of this paper is to investigate the minimal surface system in Euclidean space ${\mathbb{R}}^{n+2}$
   endowed with the metric ${dx_{1}}^{2} +\cdots + {dx_{n+2}}^{2}$ and the maximal surface system
    in pseudo-Euclidean space  ${\mathbb{R}}^{n+2}_{n}$ equipped with
  the metric ${dx_{1}}^{2} + {dx_{2}}^{2} -  {dx_{3}}^{2} -\cdots - {dx_{n+2}}^{2}$.

        The research on the minimal surface system is initiated in \cite{LO77, Oss69, Oss70}.  Lawson  and  Osserman \cite{LO77} studied  non-existence, non-uniqueness and irregularity of solutions of the minimal surface system.
 There have been extensive work in extending  Bernstein's Theorem to higher codimension \cite{Fu98, HSHV09, JX99, Kaw84,
 Mes00, Ni02, Sim77, TW02, Wa03, YU02} and studying the minimal surface system  \cite{LW03, LW08, Wa04a, Wa04b}.

 About one hundred years ago, Bernstein proved a truly beautiful theorem that the only entire solutions of the minimal surface equation in ${\mathbb{R}}^{3}$
\[
 0= \left( 1 + {f_{y}}^{2}   \right) f_{xx} - 2   f_{x} f_{y}   f_{xy}  +    \left( 1 + {f_{x}}^{2}  \right) f_{yy}
 \]
  are affine functions.  As illustrated in \cite{NS05}, extending Bernstein's Theorem in ${\mathbb{R}}^{3}$ is one of the central themes in the modern theory of minimal submanifolds.

  Calabi introduced a new extension of Bernstein's Theorem in ${\mathbb{R}}^{3}$. In 1970, he showed that 
  the only entire maximal graphs in $2+1$ dimensional space-time  ${\mathbb{L}}^{3}$ are spacelike planes. 
  In 1976, Cheng and Yau generalized Calabi's Theorem in  Lorentz space ${\mathbb{L}}^{n}$ for all 
  dimensions $n \geq 3$.

 Furthermore, Calabi \cite{Cal70} introduced an interesting duality between the minimal surface equation in  ${\mathbb{R}}^{3}$ and the maximal surface equation in ${\mathbb{L}}^{3}$. Al\'{i}as and Palmer \cite{AP01} employed
  Calabi's correspondence to show that the non-existence of entire non-planar minimal graphs in ${\mathbb{R}}^{3}$
is equivalent to the non-existence of entire non-planar maximal graphs in ${\mathbb{L}}^{3}$. Recently, Ara\'{u}jo and Leite \cite{AL09} discovered interesting results on the duality which is equivalent to Calabi's correspondence.

 In this paper, we extend the geometric integrability in the Calabi correspondence to higher codimension $n \geq 2$ by
  constructing the twin correspondence between  $2$-dimensional minimal graphs having a positive area-angle function (introduced in Section \ref{ntas}) in Euclidean space ${\mathbb{R}}^{n+2}$ and  $2$-dimensional maximal graphs having the same
  positive area-angle function in pseudo-Euclidean space ${\mathbb{R}}^{n+2}_{n}$.

 More explicitly, for the codimension $n \geq 2$, by the twin correspondence, a minimal graph $x{\mathbf{e}}_{1}+y{\mathbf{e}}_{2}+f_{1}(x,y){\mathbf{e}}_{3} +\cdots+f_{n}(x,y){\mathbf{e}}_{n+2}$ in ${\mathbb{R}}^{n+2}$ over the simply connected domain $\Omega$ satisfying the positive area-angle condition $\sum_{1 \leq i<j \leq n}  {\frac{ \partial \left( {f}_{i}, {f}_{j} \right)}{\partial (x, y)}}^{2}  <1$ associates a maximal graph $x{\mathbf{e}}_{1}+y{\mathbf{e}}_{2}+g_{1}(x,y){\mathbf{e}}_{3} +\cdots+g_{n}(x,y){\mathbf{e}}_{n+2}$ in ${\mathbb{R}}^{n+2}_{n}$ over the  domain $\Omega$ obeying the positive area-angle condition $\sum_{1 \leq i<j \leq n}  {\frac{ \partial \left( {g}_{i}, {g}_{j} \right)}{\partial (x, y)}}^{2}  <1$. 
 
  Our twin correspondence shows that the minimal surface system in ${\mathbb{R}}^{n+2}$ becomes the integrability condition for the maximal surface system in  ${\mathbb{R}}^{n+2}_{n}$. The twin correspondence with $n=2$ induces an explicit duality between the  $2$-variables special Lagrangian equation and the $2$-variables split special Lagrangian equation. We also construct the correspondence from minimal graphs in ${\mathbb{R}}^{n+2}$ to special Lagrangian graphs in ${\mathbb{C}}^{2}$.

The Bernstein problem for surfaces in higher codimension is to find conditions under which
 extremal graphs of functions from ${\mathbb{R}}^{2}$ to ${\mathbb{R}}^{n}$ are affine functions.
 Unlike Bernstein's Theorem in ${\mathbb{R}}^{3}$, for the higher codimension $n \geq 2$, there exist plenty of entire $2$-dimensional minimal non-planar  graphs in ${\mathbb{R}}^{n+2}$.

   Recently, Hasanis, Savas-Halilaj and Vlachos  \cite{HSHV09} showed that Bernstein's Theorem for  $2$-dimensional entire minimal graphs  in ${\mathbb{R}}^{n+2}$ with a positive area-angle function holds. The twin correspondence induces a Calabi type theorem for entire $2$-dimensional  maximal graphs in ${\mathbb{R}}^{n+2}_{n}$ with a positive area-angle function.

 We briefly sketch the structure of the rest of this paper.  In Section 2, we quickly review several notations,
 the minimal surface system in Euclidean space ${\mathbb{R}}^{n+2}$, and the maximal surface system  in pseudo-Euclidean space ${\mathbb{R}}^{n+2}_{n}$.

 In Section  \ref{GaussMap}, we study the generalized Gauss map of minimal surfaces in ${\mathbb{R}}^{n+2}$
 to investigate their global properties. Our extended Osserman's Lemma on degenerate Gauss maps of entire $2$-dimensional
 minimal graphs in ${\mathbb{R}}^{n+2}$ indicates that pairs of their height functions having
 nowhere zero Jacobian determinant on the whole plane contribute the degeneracy of their Gauss maps.

   Employing our extended Osserman's Lemma, we are able to prove several Bernstein type
  theorems for minimal surfaces with codimension $n \geq 2$.  If the height functions of
  entire $2$-dimensional minimal graphs in ${\mathbb{R}}^{n+2}$ are strictly monotone, they are planes.
 We  present a minimal-surface proof of J\"{o}rgens' Theorem that the only entire  solutions
 of the unimodular Hessian equation are quadratic polynomial functions.

 Section \ref{SR2R4} is devoted to the construction of the correspondence from  minimal graphs in ${\mathbb{R}}^{n+2}$ to
special Lagrangian graphs in ${\mathbb{C}}^{2}$.

 In Section \ref{SecTwin}, we construct the twin correspondence between  $2$-dimensional minimal graphs
 in ${\mathbb{R}}^{n+2}$ with a positive area-angle function and  $2$-dimensional maximal graphs
 in ${\mathbb{R}}^{n+2}_{n}$ with the same positive area-angle function.
  This generalize classical Calabi's correspondence between minimal graphs in ${\mathbb{R}}^{3}$ and maximal graphs in
 ${\mathbb{L}}^{3}$. An application of the twin correspondence yields a higher codimension extension
 of Calabi's Theorem in ${\mathbb{L}}^{3}$.

 In Section \ref{SyMA}, we introduce the special Lagrangian equation and the split special Lagrangian
 equation and see why they are geometrically equivalent to each other. Exploiting J\"{o}rgens' Theorem on
  the unimodular Hessian equation, we can classify all entire solutions of these
 two symplectic Monge-Amp\'{e}re equations. In particular, our Calabi type theorem states that any entire
 maximal gradient graph $(x,y,f_{x},f_{y})$ in ${\mathbb{R}}^{4}_{2}$ for some potential
 function $f:{\mathbb{R}}^{2} \rightarrow {\mathbb{R}}$ should be a spacelike plane.

 Finally, in Section \ref{SecHOL}, we prove the existence of simultaneous conformal coordinate transformations for
  twin graphs in ${\mathbb{R}}^{n+2}$ and ${\mathbb{R}}^{n+2}_{n}$.  The twin correspondence admits an explicit description via the induced holomorphic curves in their Weierstrass representation formulas.
  
  \textbf{Acknowledgement.} I would like to deeply thank my advisor Jaigyoung Choe for introducing me to the theory
 of minimal submanifolds, one of the most beautiful as well as useful branches of Mathematics. This paper was part 
 of my Ph.D. thesis. I also would like to express my gratitude to the referee for helpful comments and 
 valuable suggestions.

 \section{Preliminaries}

\subsection{Notations and assumptions}  \label{ntas}
 Throughout this paper,   $\Omega \subset {\mathbb{R}}^{2}$ denotes a simply connected domain.
All real-valued functions here are at least class of ${\mathcal{C}}^{2}$.
 Our ambient spaces are ${\mathbb{R}}^{n+2}$ and ${\mathbb{R}}^{n+2}_{n}$.  Here, ${\mathbb{R}}^{n+2}$ denotes the  Euclidean space endowed with the metric ${dx_{1}}^{2} +\cdots + {dx_{n+2}}^{2}$ and  ${\mathbb{R}}^{n+2}_{n}$ the  pseudo-Euclidean space  equipped with
  the metric ${dx_{1}}^{2} + {dx_{2}}^{2} -  {dx_{3}}^{2} -\cdots - {dx_{n+2}}^{2}$.

\begin{definition}  Let $n \geq 2$. Given a function $\phi=\left({\phi}_{1}, \cdots, {\phi}_{n}\right): \Omega \subset {\mathbb{R}}^{2} \rightarrow {\mathbb{R}}^{n}$, we use the notation
\[
 \Vert \mathcal{J} \Vert :=  \sqrt{ \sum_{1 \leq i<j \leq n}  {{\mathcal{J}}_{i,j}}^{2}}, \quad  {\mathcal{J}}_{i,j}:= \frac{ \partial \left( {\phi}_{i}, {\phi}_{j} \right)}{\partial (x, y)}.
\]
When the estimation $\Vert \mathcal{J} \Vert<1$ holds on the domain $\Omega$, we say that
 $\phi=\left({\phi}_{1}, \cdots, {\phi}_{n}\right)$ is a positive area-angle map
and that the graph of $\phi=\left({\phi}_{1}, \cdots, {\phi}_{n}\right)$ has a positive area-angle.  We call the number ${\Theta}_{\phi}=\cos^{-1}{\left(  \Vert \mathcal{J} \Vert \right)} \in \left(0, \frac{\pi}{2} \right]$ the area-angle of $\phi$.
\end{definition}

 \subsection{Minimal surface system in Euclidean space ${\mathbb{R}}^{n+2}$}

  Let   ${\mathbf{e}}_{1}=(1,0, \cdots, 0)$, $\cdots$,  ${\mathbf{e}}_{n+2}=(0,0, \cdots, 1)$ denote the
  standard basis in ${\mathbb{R}}^{n+2}$.

 \begin{prop}[\textbf{Osserman}, \cite{Oss69, Oss86}] \label{MSS1}
 The graph $\mathbf{\Phi}(x,y)=x {\mathbf{e}}_{1}+y{\mathbf{e}}_{2}+ f_{1}(x,y) {\mathbf{e}}_{3} +  \cdots + f_{n}(x,y)  {\mathbf{e}}_{n+2}$   of the height function $f=\left({f}_{1}, \cdots, {f}_{n}\right): \Omega \rightarrow {\mathbb{R}}^{n}$ becomes a minimal surface in ${\mathbb{R}}^{n+2}$ if and only if
 the minimal surface system holds:
\[
  G \frac{\partial^{2} f_{k}}{\partial x^{2}} - 2  F  \frac{\partial^{2} f_{k}}{\partial x \partial y} +  E \frac{\partial^{2} f_{k}}{\partial y^{2}}=0, \quad k \in \{1, \cdots, n\}.
\]
 Every ${\mathcal{C}}^{2}$ solution of the minimal
surface system is real analytic. Here, we set
\[
\begin{cases}
E=1 +  \left(
\frac{\partial f_{1}}{\partial x} \right)^{2} +\cdots + \left(
\frac{\partial f_{n}}{\partial x} \right)^{2}, \\
F= \quad \quad \frac{\partial f_{1}}{\partial x}  \frac{\partial f_{1}}{\partial y}
+ \cdots + \frac{\partial f_{n}}{\partial x}  \frac{\partial
f_{n}}{\partial y}, \\
G= 1 +   \left( \frac{\partial
f_{1}}{\partial y} \right)^{2} + \cdots + \left( \frac{\partial
f_{n}}{\partial y} \right)^{2}.
\end{cases}
\]
The patch $\mathbf{\Phi}$ induces the metric $ds_{\mathbf{\Phi}}^{2}=E dx^{2}+2F dx dy +Gdy^{2}$.
\end{prop}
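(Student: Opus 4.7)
The plan is to derive the minimal surface system directly from the vanishing mean curvature vector characterization of minimality, exploiting the special structure of a vertical graph to reduce the vector equation $\vec{H}=0$ to the displayed $n$ scalar PDEs. First I read off the first-order data $\mathbf{\Phi}_{x}=\mathbf{e}_{1}+\sum_{k=1}^{n}\frac{\partial f_{k}}{\partial x}\mathbf{e}_{k+2}$ and $\mathbf{\Phi}_{y}=\mathbf{e}_{2}+\sum_{k=1}^{n}\frac{\partial f_{k}}{\partial y}\mathbf{e}_{k+2}$, from which the stated identities $E=\mathbf{\Phi}_{x}\cdot\mathbf{\Phi}_{x}$, $F=\mathbf{\Phi}_{x}\cdot\mathbf{\Phi}_{y}$, $G=\mathbf{\Phi}_{y}\cdot\mathbf{\Phi}_{y}$ are immediate, as is $EG-F^{2}>0$. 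Crucially, the second-order derivatives $\mathbf{\Phi}_{xx}$, $\mathbf{\Phi}_{xy}$, $\mathbf{\Phi}_{yy}$ all have vanishing first two components, since the coordinate functions $x$ and $y$ are themselves linear in the parameters.

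Next I invoke the classical formula for the mean curvature vector in arbitrary (non-isothermal) parameters,
$$2(EG-F^{2})\vec{H} = \bigl(G\,\mathbf{\Phi}_{xx}-2F\,\mathbf{\Phi}_{xy}+E\,\mathbf{\Phi}_{yy}\bigr)^{\perp},$$
where $(\cdot)^{\perp}$ denotes normal projection. Minimality $\vec{H}\equiv 0$ therefore amounts to the vanishing of this normal projection. The key step is the observation that the vector $V := G\,\mathbf{\Phi}_{xx}-2F\,\mathbf{\Phi}_{xy}+E\,\mathbf{\Phi}_{yy}$ has zero first two components. Any tangential combination $a\mathbf{\Phi}_{x}+b\mathbf{\Phi}_{y}$ whose first two components vanish must satisfy $a=b=0$, since those components are precisely $(a,b)$. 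Hence the tangential part of $V$ already vanishes, and $V^{\perp}=0$ is equivalent to $V=0$ as a vector in $\mathbb{R}^{n+2}$. Reading off the remaining $n$ components of $V=0$ reproduces exactly $Gf_{k,xx}-2Ff_{k,xy}+Ef_{k,yy}=0$ for $k=1,\ldots,n$, which is the minimal surface system; the converse implication is trivial.

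For real analyticity, the system is uniformly elliptic on any $\mathcal{C}^{2}$ solution: the principal symbol is the scalar-times-identity form $(G\xi_{1}^{2}-2F\xi_{1}\xi_{2}+E\xi_{2}^{2})\mathbf{I}_{n}$, whose discriminant $4F^{2}-4EG<0$ makes the quadratic form positive-definite. Since $E,F,G$ depend polynomially (hence real-analytically) on the first derivatives of $f$, Morrey's classical theorem on real-analyticity of $\mathcal{C}^{2}$ solutions of quasilinear elliptic systems with real-analytic coefficients delivers the conclusion. The main obstacle is the mean-curvature-vector identity above, which follows from a short Gauss-formula computation, or equivalently from applying the Beltrami formula $\Delta_{g}\mathbf{\Phi}=2\vec{H}$ componentwise; everything else is algebraic bookkeeping or invocation of standard elliptic regularity.
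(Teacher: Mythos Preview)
The paper does not supply a proof of this proposition; it is stated in the preliminaries as a cited result from Osserman \cite{Oss69, Oss86}. Your argument via the mean curvature vector is standard and essentially sound.

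One sentence needs tightening. You write ``Hence the tangential part of $V$ already vanishes,'' which read literally asserts $V^{\top}=0$ unconditionally. That is false: for $n=1$, $f(x,y)=x^{2}$ one has $V=(0,0,2)$ and $V\cdot\mathbf{\Phi}_{x}=4x\neq 0$, so $V$ is not normal in general. What your ingredients actually prove, and what is all you need, is the equivalence $V^{\perp}=0\Longleftrightarrow V=0$: if $V^{\perp}=0$ then $V$ is tangential, so $V=a\mathbf{\Phi}_{x}+b\mathbf{\Phi}_{y}$, and since the first two components of $V$ vanish while those of $a\mathbf{\Phi}_{x}+b\mathbf{\Phi}_{y}$ are $(a,b)$, one gets $a=b=0$ and hence $V=0$. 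With that rephrasing the derivation of the system is correct, and the appeal to Morrey's theorem for real-analytic regularity of $\mathcal{C}^{2}$ solutions of a quasilinear elliptic system with analytic coefficients is appropriate.
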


\begin{definition} Let $\omega=\sqrt{EG-F^{2}}>0$ denote the area element. As in the codimension one case, we call
  $\frac{1}{\omega}$ the angle function of the minimal graph $\mathbf{\Phi}$ in ${\mathbb{R}}^{n+2}$.
\end{definition}

 \begin{prop} \label{MSS2} Let  $\left({\alpha}_{k}, {\beta}_{k} \right)=\left(\frac{\partial f_{k}}{\partial x}, \frac{\partial f_{k}}{\partial y} \right)$, $k \in \{1, \cdots, n\}$. \\
 (a) We can re-write the minimal surface system in the divergence-zero form:
\[
  \frac{\partial }{\partial x} \left( \frac{G}{\omega} {\alpha}_{k} - \frac{F}{\omega}  {\beta}_{k}  \right)+
   \frac{\partial }{\partial y} \left( \frac{E}{\omega} {\beta}_{k} - \frac{F}{\omega}  {\alpha}_{k}   \right)=0, \quad k \in \{1, \cdots, n\}.
\]
 (b) We also have $\frac{\partial }{\partial x} \left( \frac{G}{\omega} \right) = \frac{\partial }{\partial y} \left( \frac{F}{\omega} \right)$ and
 $\frac{\partial }{\partial x} \left(  \frac{F}{\omega}   \right) =  \frac{\partial }{\partial y} \left( \frac{E}{\omega}   \right)$.
\end{prop}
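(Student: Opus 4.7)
The plan is to interpret both (a) and (b) as instances of the Beltrami identity $\Delta_{\mathbf{\Phi}}\mathbf{\Phi} = 2\vec{H}$, so that the minimality of $\mathbf{\Phi}$ forces every ambient Euclidean coordinate function to restrict to a harmonic function on the surface. Once this harmonicity is available, the identities follow by feeding the three natural coordinates $u = f_k$, $u = x$, $u = y$ into a single local formula for the Laplace-Beltrami operator of $ds_{\mathbf{\Phi}}^{2}$.

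First I would derive the coordinate expression
$$\omega\, \Delta_{\mathbf{\Phi}} u \;=\; \frac{\partial}{\partial x}\!\left(\frac{G u_x - F u_y}{\omega}\right) + \frac{\partial}{\partial y}\!\left(\frac{E u_y - F u_x}{\omega}\right), \qquad u \in \mathcal{C}^{2}(\Omega),$$
by substituting $g^{11} = G/\omega^{2}$, $g^{12} = -F/\omega^{2}$, $g^{22} = E/\omega^{2}$ and $\sqrt{\det g} = \omega$ into the standard formula $\Delta u = \frac{1}{\sqrt{\det g}}\partial_{i}(\sqrt{\det g}\,g^{ij}\partial_{j} u)$. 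Choosing $u = f_k$ gives $u_{x}=\alpha_{k}$ and $u_{y}=\beta_{k}$, so $\Delta_{\mathbf{\Phi}} f_k = 0$ becomes the divergence-zero form in part (a); choosing $u = x$ (for which $u_{x}=1,\,u_{y}=0$) and $u = y$ turns $\Delta_{\mathbf{\Phi}} x = \Delta_{\mathbf{\Phi}} y = 0$ into the two identities in part (b).

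The main obstacle is to deduce the full harmonicity $\Delta_{\mathbf{\Phi}}\Phi^{i} = 0$ for all $n+2$ Euclidean coordinates from Proposition \ref{MSS1}, which records only $n$ equations, one per height function. This is supplied by the Beltrami identity itself: minimality of the graph (the content of Proposition \ref{MSS1}) is precisely $\vec{H} = 0$, equivalent via $\Delta_{\mathbf{\Phi}}\mathbf{\Phi} = 2\vec{H}$ to the simultaneous vanishing $\Delta_{\mathbf{\Phi}} \Phi^{i} = 0$ for every $i \in \{1,\dots,n+2\}$. Alternatively, one may argue internally: since $2\vec{H} = \Delta_{\mathbf{\Phi}}\mathbf{\Phi}$ is a normal field, the inner products $\langle \Delta_{\mathbf{\Phi}}\mathbf{\Phi}, \mathbf{\Phi}_{x}\rangle$ and $\langle \Delta_{\mathbf{\Phi}}\mathbf{\Phi}, \mathbf{\Phi}_{y}\rangle$ vanish automatically, yielding the linear relations
$$\Delta_{\mathbf{\Phi}} x = -\sum_{k=1}^{n} \alpha_{k}\,\Delta_{\mathbf{\Phi}} f_{k}, \qquad \Delta_{\mathbf{\Phi}} y = -\sum_{k=1}^{n} \beta_{k}\,\Delta_{\mathbf{\Phi}} f_{k},$$
so that the harmonicity of $x$ and $y$ is automatic once that of the $f_{k}$ is in hand. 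This closes the proofs of both (a) and (b).
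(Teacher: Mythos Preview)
Your proof is correct. The paper states Proposition~\ref{MSS2} without giving a proof, treating it as standard background (implicitly covered by the Osserman references cited in Proposition~\ref{MSS1}); so there is no ``paper's own proof'' to compare against. Your route via the Beltrami identity $\Delta_{\mathbf{\Phi}}\mathbf{\Phi}=2\vec H$ and the coordinate formula for the Laplace--Beltrami operator is exactly the conceptual derivation that underlies these identities, and your handling of the extra two equations for $x$ and $y$ (either directly from $\vec H=0$, or from the orthogonality $\langle \Delta_{\mathbf{\Phi}}\mathbf{\Phi},\mathbf{\Phi}_x\rangle=\langle \Delta_{\mathbf{\Phi}}\mathbf{\Phi},\mathbf{\Phi}_y\rangle=0$, which holds for \emph{any} immersion since $\Delta_{\mathbf{\Phi}}\mathbf{\Phi}$ is always normal) is clean and complete.

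One small remark worth recording: expanding your divergence expression for $u=f_k$ by the product rule gives
\[
\omega\,\Delta_{\mathbf{\Phi}}f_k=\tfrac{1}{\omega}\bigl(G f_{k,xx}-2F f_{k,xy}+E f_{k,yy}\bigr)+\alpha_k\,\omega\,\Delta_{\mathbf{\Phi}}x+\beta_k\,\omega\,\Delta_{\mathbf{\Phi}}y,
\]
which shows explicitly that the divergence form in (a) and the second-order form in Proposition~\ref{MSS1} differ precisely by the quantities in (b). Thus (a) and (b) are really a single package, and your argument delivers both at once. This also makes transparent the ``re-write'' claimed in (a): the two systems are equivalent \emph{as systems}, not term-by-term.
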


 \subsection{Maximal surface system in pseudo-Euclidean space ${\mathbb{R}}^{n+2}_{n}$}

 We consider ${\mathbf{e}}_{1}=(1,0, \cdots, 0)$, $\cdots$,  ${\mathbf{e}}_{n+2}=(0,0, \cdots, 1)$  as the
  standard basis in ${\mathbb{R}}^{n+2}_{n}$.

 \begin{prop} \label{MSS3}
  The spacelike graph $\mathbf{\widehat{\Phi}}(x,y)=x {\mathbf{e}}_{1}+y{\mathbf{e}}_{2}+ g_{1}(x,y) {\mathbf{e}}_{3} +  \cdots + g_{n}(x,y)  {\mathbf{e}}_{n+2}$   of the height function $g=\left({g}_{1}, \cdots, {g}_{n}\right): \Omega \rightarrow {\mathbb{R}}^{n}$ becomes a  maximal surface in  ${\mathbb{R}}^{n+2}_{n}$  if and only if $g$ satisfies the maximal surface
  system:
\[
  \widehat{G} \frac{\partial^{2} g_{k}}{\partial x^{2}} - 2    \widehat{F}  \frac{\partial^{2} g_{k}}{\partial x \partial y} +
     \widehat{E} \frac{\partial^{2} g_{k}}{\partial y^{2}}=0, \quad k \in \{1, \cdots, n\},
\]
or equivalently,
\[
  \frac{\partial }{\partial x} \left( \frac{\widehat{G}}{\widehat{\omega}} {\widehat{\alpha}}_{k} - \frac{\widehat{F}}{\widehat{\omega}}  {\widehat{\beta}}_{k}  \right)+
   \frac{\partial }{\partial y} \left( \frac{\widehat{E}}{\widehat{\omega}} {\widehat{\beta}}_{k} - \frac{\widehat{F}}{\widehat{\omega}}  {\widehat{\alpha}}_{k}   \right)=0, \quad  \left({\widehat{\alpha}}_{k}, {\widehat{\beta}}_{k} \right)=\left(\frac{\partial g_{k}}{\partial x}, \frac{\partial g_{k}}{\partial y} \right).
\]
Here, we write
\[
\begin{cases}
\widehat{E}=1-  \left( \frac{\partial g_{1}}{\partial x} \right)^{2} -\cdots - \left( \frac{\partial g_{n}}{\partial x} \right)^{2}, \\
\widehat{F}= \;\;\;   - \frac{\partial g_{1}}{\partial x}  \frac{\partial g_{1}}{\partial y} - \cdots - \frac{\partial g_{n}}{\partial x}  \frac{\partial g_{n}}{\partial y}, \\
\widehat{G}= 1 -  \left( \frac{\partial g_{1}}{\partial y} \right)^{2} - \cdots - \left( \frac{\partial g_{n}}{\partial y} \right)^{2}.
\end{cases}
\]
 We assume the spacelike condition $\widehat{E}\widehat{G}-{\widehat{F}}^{2}>0$.  The patch $\mathbf{\widehat{\Phi}}$ induces the Riemannian metric $ds_{\mathbf{\widehat{\Phi}}}^{2}= \widehat{E}dx^{2}+2 \widehat{F}dx dy + \widehat{G}dy^{2}$. We set $\widehat{\omega}=\sqrt{\widehat{E}\widehat{G}-{\widehat{F}}^{2}}$. We call $\frac{1}{\widehat{\omega}}$ the angle function of the maximal graph $\mathbf{\widehat{\Phi}}$ in ${\mathbb{R}}^{n+2}_{n}$.
   Also, we obtain two identities $\frac{\partial }{\partial x} \left( \frac{\widehat{G}}{\widehat{\omega}} \right) = \frac{\partial }{\partial y} \left( \frac{\widehat{F}}{\widehat{\omega}} \right)$ and $\frac{\partial }{\partial x} \left(  \frac{\widehat{F}}{\widehat{\omega}}   \right) =  \frac{\partial }{\partial y} \left( \frac{\widehat{E}}{\widehat{\omega}}   \right)$.
\end{prop}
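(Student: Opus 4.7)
The plan is to replay, almost verbatim with Lorentzian sign changes, the argument behind the Euclidean Propositions \ref{MSS1} and \ref{MSS2}. Because the spacelike hypothesis $\widehat{E}\widehat{G} - \widehat{F}^{2} > 0$ makes the induced metric Riemannian, the intrinsic Laplace--Beltrami operator $\Delta_{\mathbf{\widehat{\Phi}}}$ is well defined, and one can mimic the classical identification of vanishing mean curvature with the condition that $\Delta_{\mathbf{\widehat{\Phi}}} \mathbf{\widehat{\Phi}} = 0$ holds componentwise in the ambient space.

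First I would compute the tangent vectors
$$\mathbf{\widehat{\Phi}}_{x} = \mathbf{e}_{1} + \sum_{k=1}^{n} \frac{\partial g_{k}}{\partial x}\, \mathbf{e}_{k+2}, \qquad \mathbf{\widehat{\Phi}}_{y} = \mathbf{e}_{2} + \sum_{k=1}^{n} \frac{\partial g_{k}}{\partial y}\, \mathbf{e}_{k+2},$$
and pair them using the ambient signature $(+,+,-,\ldots,-)$. The contribution $+1$ from $\mathbf{e}_{1},\mathbf{e}_{2}$ together with $-1$ from each $\mathbf{e}_{k+2}$ reproduces the advertised $\widehat{E}, \widehat{F}, \widehat{G}$, and the spacelike assumption $\widehat{E}\widehat{G} - \widehat{F}^{2} > 0$ makes $\widehat{\omega}$ real and $ds_{\mathbf{\widehat{\Phi}}}^{2}$ Riemannian.

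Second, I would invoke the standard coordinate expression
$$\Delta_{\mathbf{\widehat{\Phi}}} u = \frac{1}{\widehat{\omega}}\left[ \frac{\partial}{\partial x}\!\left( \frac{\widehat{G}}{\widehat{\omega}} \frac{\partial u}{\partial x} - \frac{\widehat{F}}{\widehat{\omega}} \frac{\partial u}{\partial y}\right) + \frac{\partial}{\partial y}\!\left( \frac{\widehat{E}}{\widehat{\omega}} \frac{\partial u}{\partial y} - \frac{\widehat{F}}{\widehat{\omega}} \frac{\partial u}{\partial x}\right)\right],$$
which depends only on the induced Riemannian metric and therefore transplants without change from the Euclidean setting. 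Since the first and second ambient coordinates of $\mathbf{\widehat{\Phi}}$ are $x$ and $y$, applying $\Delta_{\mathbf{\widehat{\Phi}}}$ to these two coordinate functions and equating to zero (as forced by maximality, exactly as in Proposition \ref{MSS2}) yields the two identities $\frac{\partial }{\partial x} \left( \frac{\widehat{G}}{\widehat{\omega}} \right) = \frac{\partial }{\partial y} \left( \frac{\widehat{F}}{\widehat{\omega}} \right)$ and $\frac{\partial }{\partial x} \left(  \frac{\widehat{F}}{\widehat{\omega}}\right) = \frac{\partial }{\partial y} \left( \frac{\widehat{E}}{\widehat{\omega}}\right)$. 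Applying $\Delta_{\mathbf{\widehat{\Phi}}}$ to each height coordinate $g_{k}$ and again equating to zero produces precisely the divergence form of the maximal surface system. Expanding the divergences and cancelling via the two identities just established collapses the divergence form into the non-divergence form $\widehat{G} \frac{\partial^{2} g_{k}}{\partial x^{2}} - 2\widehat{F} \frac{\partial^{2} g_{k}}{\partial x \partial y} + \widehat{E} \frac{\partial^{2} g_{k}}{\partial y^{2}} = 0$, proving the equivalence of the two formulations.

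The only obstacle worth flagging is bookkeeping: one must carry the Lorentzian minus signs consistently into $\widehat{E},\widehat{F},\widehat{G}$ and verify that these signs do not disturb the intrinsic formula for $\Delta_{\mathbf{\widehat{\Phi}}}$, which is a purely Riemannian object once the spacelike condition holds. No conceptually new ingredient is required beyond those already deployed for the Euclidean Propositions \ref{MSS1} and \ref{MSS2}.
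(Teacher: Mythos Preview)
Your proposal is correct and matches the paper's own treatment: the paper gives no detailed proof of Proposition~\ref{MSS3} beyond the sentence ``The proofs of the above results are analogous to the minimal surface system,'' and your outline---carrying the Lorentzian signs into $\widehat{E},\widehat{F},\widehat{G}$, using the intrinsic Laplace--Beltrami formula on the spacelike (hence Riemannian) graph, and reading off the two identities from $\Delta_{\mathbf{\widehat{\Phi}}}x=\Delta_{\mathbf{\widehat{\Phi}}}y=0$---is exactly that analogy.
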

The proofs of the above results  are analogous to the minimal surface system.

\section{Two dimensional minimal graphs in ${\mathbb{R}}^{n+2}$}  \label{BL}

\subsection{Extension of Osserman's Lemma}  \label{GaussMap}

 Bernstein's Theorem in ${\mathbb{R}}^{3}$ is essentially concerned with the Gauss map. We first introduce the
 generalized Gauss map of minimal surfaces in ${\mathbb{R}}^{n+2}$ \cite{CO67, HO80, HO83, Oss69, Oss86}. Inside the $n+1$ dimensional complex projective space  ${\mathbb{C}}{\mathbb{P}}^{n+1}$, we take the complex hyperquadric
\[
  {\mathcal{Q}}_{n}:=\{ z=[{z}_{1}: \cdots: {z}_{n+2} ] \in  {\mathbb{C}}{\mathbb{P}}^{n+1}  : {{z}_{1}}^{2} + \cdots +  {{z}_{n+2}}^{2} =0 \}.
\]

\begin{definition}[\textbf{Generalized Gauss map of minimal surfaces in ${\mathbb{R}}^{n+2}$}]
Let $\Sigma$ be a minimal surface in ${\mathbb{R}}^{n+2}$. Consider a conformal harmonic immersion $X:\Sigma \rightarrow {\mathbb{R}}^{n+2}$, $\xi \mapsto X(\xi)$. The Gauss map of $\Sigma$ is the map
$\mathcal{G}:\Sigma \rightarrow {\mathcal{Q}}_{n} \subset {\mathbb{C}}{\mathbb{P}}^{n+1}$ defined by
\[
   \mathcal{G}(\xi)=\left[ \;   \overline{ \frac{\partial X}{\partial {\xi}} } \; \right]
   = \left[ \frac{\partial X}{\partial {\xi}_{1}}  + i \frac{\partial X}{\partial {\xi}_{2}} \right] \in  {\mathbb{C}}{\mathbb{P}}^{n+1}.
\]
The map $\mathcal{G}$ is independent of the choice of the conformal parameter $\xi$.
\end{definition}

\begin{lemma}[\textbf{Osserman's Lemma}, \cite{Oss70, Oss86}] \label{oentire}
Let $\Sigma$ be an entire minimal graph  of the function $f=\left(f_{1}, \cdots, f_{n} \right):{\mathbb{R}}^{2} \rightarrow {\mathbb{R}}^{n}$, $n \geq 1$
\[
 x{\mathbf{e}}_{1}+y{\mathbf{e}}_{2}+f_{1}(x,y){\mathbf{e}}_{3} +\cdots+f_{n}(x,y){\mathbf{e}}_{n+2}.
\]
(a) The Gauss map image $\mathcal{G}\left(\Sigma\right)$ lies on a hyperplane $z_{2}
= {\lambda} z_{1}$ for some  ${\lambda}  \in {\mathbb{C}}-\mathbb{R}$. \\
(b) There exists a non-singular linear transformation $\left(u,v\right)\mapsto \left(x,y\right)=\left(u,au+bv\right)$  for some
$a, b \in \mathbb{R}$, $b \neq 0$ such that $u+iv$ is a global isothermal parameter for $\Sigma$.
\end{lemma}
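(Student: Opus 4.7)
The plan is to use the integrability identities of Proposition \ref{MSS2} to build an explicit global isothermal parameter for $\Sigma$, read off entire holomorphic Weierstrass data $\phi_1,\phi_2$ from the first two coordinate slots, and apply Liouville's theorem to force $\phi_2/\phi_1$ to be a non-real constant. Part (a) is then immediate and (b) falls out by integrating the constancy equation.

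\textbf{Building the parameter.} By Proposition \ref{MSS2}(b) the one-form $(F/\omega)\,dx + (G/\omega)\,dy$ is closed on the simply connected $\mathbb{R}^{2}$, so it has a primitive $\psi$ with $\psi_{x}=F/\omega$ and $\psi_{y}=G/\omega$. A direct computation of the induced metric in the coordinates $(u,v):=(x,\psi(x,y))$ shows these are local isothermal parameters on $\Sigma$. In these coordinates the first two holomorphic components $\phi_{i}=(X_{i})_{u}-i(X_{i})_{v}$ of the Weierstrass data read $\phi_{1}=1$ and $\phi_{2}=-(F+i\omega)/G$; note that $\mathrm{Im}\,\phi_{2}=-\omega/G<0$ everywhere.

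\textbf{Liouville.} Before applying Liouville I must verify that $(x,y)\mapsto(x,\psi(x,y))$ is a global diffeomorphism of $\mathbb{R}^{2}$ onto $\mathbb{R}^{2}$, i.e., that the $(u,v)$-parameter genuinely ranges over all of $\mathbb{C}$; this is the conformal parabolicity of $\Sigma$, a classical theorem for entire minimal graphs in arbitrary codimension (Chern--Osserman) and the main nontrivial input. Granting it, $\phi_{2}$ is entire on $\mathbb{C}$ with image in the open lower half-plane $\mathbb{H}^{-}$; composing with a biholomorphism $\mathbb{H}^{-}\to\mathbb{D}$ and invoking Liouville, $\phi_{2}\equiv\lambda$ for some $\lambda=a-ib$ with $b>0$. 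Part (a) is then immediate: the Gauss map $[\phi_{1}:\phi_{2}:\cdots]$ lies on the hyperplane $\{z_{2}=\lambda z_{1}\}$ with $\lambda\in\mathbb{C}\setminus\mathbb{R}$.

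\textbf{Linear transformation and main obstacle.} For (b), the constancy $y_{u}-iy_{v}=a-ib$ integrates to $y=au+bv+c$ for some real constant $c$. Translating $v$ by $-c/b$ preserves isothermality and absorbs $c$, so together with $x=u$ one obtains the nonsingular linear substitution $(u,v)\mapsto(x,y)=(u,au+bv)$ with $b\neq 0$, as claimed. The principal obstacle throughout is the conformal parabolicity step: without it the $(u,v)$-image is only a proper simply connected subdomain of $\mathbb{C}$, Liouville fails, and $\phi_{2}$ can vary freely in $\mathbb{H}^{-}$. Once parabolicity is in hand the remaining arguments are routine complex analysis.
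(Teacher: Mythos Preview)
Your argument is correct and runs parallel to the paper's treatment. The paper does not prove Lemma~\ref{oentire} separately (it is quoted from \cite{Oss70,Oss86}), but its proof of the Extended Osserman's Lemma (Lemma~\ref{entire}), specialized to the pair $(i,j)=(1,2)$, follows the same architecture you use: build a global isothermal parameter from the closed forms in Proposition~\ref{MSS2}(b), cite Osserman for the fact that the parameter map is a diffeomorphism of $\mathbb{R}^{2}$ onto itself, and then apply Liouville to conclude that the ratio of the first two holomorphic Weierstrass components is a non-real constant. The one genuine difference is the choice of coordinate change: the paper uses the symmetric map $(\xi_{1},\xi_{2})=(x+M,\,y+N)$ with $(M_{x},M_{y},N_{x},N_{y})=(E/\omega,F/\omega,F/\omega,G/\omega)$, whereas you use the simpler $(u,v)=(x,\psi)$ with $(\psi_{x},\psi_{y})=(F/\omega,G/\omega)$. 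Your choice buys you $\phi_{1}\equiv 1$ for free, so Liouville is applied directly to $\phi_{2}$ rather than to a quotient, and part~(b) drops out immediately since $x=u$ and the constancy of $\phi_{2}=y_{u}-iy_{v}$ integrates to a linear expression for $y$; the paper's choice, on the other hand, is the one reused later in Theorem~\ref{RnR4} and Theorem~\ref{SCC}. Both routes rest on the same nontrivial black box---surjectivity of the isothermal coordinate map onto $\mathbb{R}^{2}$, i.e.\ conformal parabolicity of the entire graph---which you correctly flag as the principal obstacle.
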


 Osserman's Lemma on Gauss maps is highly useful. It is  used in \cite{Fu98, HSHV09, Kaw84, Ni02} for the
  proofs of various Bernstein type theorems for entire minimal graphs in  ${\mathbb{R}}^{4}$.
 In Lemma \ref{entire}, we present an extension of Osserman's Lemma. It indicates that, given an entire two dimensional minimal graph in ${\mathbb{R}}^{n+2}$, pairs of its height functions having non-zero Jacobian determinant on the whole plane also contributes the degeneracy of its Gauss map. The following simple observation on the sign of the Jacobian determinants of entire holomorphic graphs is the motivation of Lemma \ref{entire}:

\begin{remark} \label{ehg} We consider an entire holomorphic graph $\Sigma$ in ${\mathbb{R}}^{2k+2}$ given by
\begin{eqnarray*}
 && F_{1}(x,y){\mathbf{e}}_{1}+F_{2}(x,y){\mathbf{e}}_{2}+F_{3}(x,y){\mathbf{e}}_{3} +\cdots+F_{n+2}(x,y){\mathbf{e}}_{n+2} \\
 &=& x{\mathbf{e}}_{1}+ y {\mathbf{e}}_{2}+ {\mathrm{Re}(\phi_{1})}{\mathbf{e}}_{3} +
 {\mathrm{Im}(\phi_{1})}{\mathbf{e}}_{4} + \cdots + {\mathrm{Re}(\phi_{k})}{\mathbf{e}}_{2k+1} +
 {\mathrm{Im}(\phi_{k})}{\mathbf{e}}_{2k+2},
\end{eqnarray*}
where $\phi_{1}(z), \cdots, \phi_{k}(z): {\mathbb{C}} \to {\mathbb{C}}$, $z=x+iy \in \mathbb{R}+i \mathbb{R}$ are entire holomorphic functions.  The entire graph $\Sigma$ is minimal in ${\mathbb{R}}^{2k+2}$. We observe that the Jacobian
${\mathcal{J}}_{1,2}:= \frac{ \partial \left( {F}_{1}, {F}_{2} \right)}{\partial (x, y)}
 = \frac{ \partial \left( x,y \right)}{\partial (x, y)}=1$ is positive  on the whole plane ${\mathbb{R}}^{2}$ and that,  for each $i \in \{1, \cdots, k\}$, the Jacobian
${\mathcal{J}}_{2i+1,2i+2}:= \frac{ \partial \left( {F}_{2i+1}, {F}_{2i+2} \right)}{\partial (x, y)}$
is always non-negative on the whole plane ${\mathbb{R}}^{2}$. It is because, for any  differentiable function
 $\phi: {\mathbb{C}} \to {\mathbb{C}}$ given by
$\xi={\xi}_{1}+i{\xi}_{2} \in \mathbb{R}+i \mathbb{R} \mapsto {\phi}_{1}({\xi}_{1}, {\xi}_{2})+i{\phi}_{2}({\xi}_{1}, {\xi}_{2}) \in \mathbb{R}+i \mathbb{R}$, we have the identity
 \[
 \frac{ \partial \left( {\phi}_{1}, {\phi}_{2} \right)}{\partial ({\xi}_{1}, {\xi}_{2})}=
 { \left\vert \frac{\partial \phi}{\partial \xi} \right\vert }^{2} -
 { \left \vert \frac{\partial \phi}{\partial \overline{\xi}} \right\vert }^{2}.
 \]
If $\phi$ is holomorphic, then $\frac{ \partial \left({\phi}_{1},{\phi}_{2}\right)}{\partial ({\xi}_{1},{\xi}_{2})} \geq 0$.
If $\phi$ is anti-holomorphic, then $\frac{\partial \left({\phi}_{1},{\phi}_{2}\right)}{\partial({\xi}_{1},{\xi}_{2})}\leq 0$.\\
\end{remark}

\begin{lemma}[\textbf{Extended Osserman's Lemma}] \label{entire}
Let $\Sigma$ be an entire minimal graph  of  $f=\left(f_{1}, \cdots, f_{n} \right):{\mathbb{R}}^{2} \rightarrow {\mathbb{R}}^{n}$, $n \geq 1$
\begin{eqnarray*}
 \mathbf{\Phi}(x,y)
 &=&  F_{1}(x,y){\mathbf{e}}_{1}+F_{2}(x,y){\mathbf{e}}_{2}+F_{3}(x,y){\mathbf{e}}_{3} +\cdots+F_{n+2}(x,y){\mathbf{e}}_{n+2} \\
 &=&  x{\mathbf{e}}_{1}+y{\mathbf{e}}_{2}+f_{1}(x,y){\mathbf{e}}_{3} +\cdots+f_{n}(x,y){\mathbf{e}}_{n+2}.
\end{eqnarray*}
 Whenever there exists a pair $(i, j)$ with  $1 \leq i < j \leq n+2$ such that the Jacobian
 \[
 {\mathcal{J}}_{i,j}:= \frac{ \partial \left( {F}_{i}, {F}_{j} \right)}{\partial (x, y)}
\]
 is positive or negative on the entire plane ${\mathbb{R}}^{2}$, the following two statements hold:\\
(a) The image $\mathcal{G}\left(\Sigma\right)$  of  $\Sigma$ under the Gauss map lies on a hyperplane of the form
\[
    z_{i} = {\lambda}_{(i,j)} z_{j}
\]
for some constant ${\lambda}_{(i,j)}  \in {\mathbb{C}}-\mathbb{R}$. \\
(b) For some constant ${\lambda}  \in {\mathbb{C}}-\mathbb{R}$, we have an analogue of Cauchy-Riemann equations
\[
    \frac{\partial F_{i}}{\partial x} + \lambda  \frac{\partial F_{i}}{\partial y}
    = {\lambda}_{(i,j)} \left(   \frac{\partial F_{j}}{\partial x} + \lambda  \frac{\partial F_{j}}{\partial y} \right).
\]
\end{lemma}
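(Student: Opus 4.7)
My plan is to reduce the lemma to a Liouville-type argument on the ratio of two components of the holomorphic Weierstrass data, using the original Osserman Lemma to equip $\Sigma$ with a global isothermal parameter on $\mathbb{C}$.

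First I would invoke Lemma \ref{oentire}(b) to fix a global isothermal parameter $\xi=u+iv$ on $\Sigma$, with $x=u$ and $y=au+bv$ for some $a\in\mathbb{R}$ and $b\in\mathbb{R}\setminus\{0\}$. Conformality of $\xi$ combined with minimality of $\Sigma$ makes every coordinate function $F_k$ harmonic on $\mathbb{C}$, so each $\phi_k:=\partial_\xi F_k$ is entire holomorphic on $\mathbb{C}$. Splitting into real and imaginary parts gives $\partial_u F_k=2\,\mathrm{Re}\,\phi_k$ and $\partial_v F_k=-2\,\mathrm{Im}\,\phi_k$, whence
\[
 \frac{\partial(F_i,F_j)}{\partial(u,v)}\;=\;4\,\mathrm{Im}\bigl(\phi_i\,\overline{\phi_j}\bigr).
\]
Since the linear change $(u,v)\mapsto(x,y)$ has constant Jacobian $b\neq 0$, the hypothesis that $\mathcal{J}_{i,j}=\partial(F_i,F_j)/\partial(x,y)$ has constant nonzero sign on $\mathbb{R}^2$ is equivalent to the statement that $\mathrm{Im}(\phi_i\,\overline{\phi_j})$ has constant nonzero sign on $\mathbb{C}$. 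In particular, neither $\phi_i$ nor $\phi_j$ vanishes at any point.

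With $\phi_j$ zero-free, the ratio $h:=\phi_i/\phi_j$ is entire holomorphic on $\mathbb{C}$. From the identity $\phi_i\,\overline{\phi_j}=h\,|\phi_j|^2$ it follows that $\mathrm{Im}\,h$ has constant nonzero sign, so $h$ maps $\mathbb{C}$ into an open half-plane. Composing $h$ with a M\"obius transformation sending that half-plane onto the unit disk yields a bounded entire function, which by Liouville's theorem must be constant. Hence $h\equiv c$ for some $c\in\mathbb{C}\setminus\mathbb{R}$, i.e.\ $\phi_i\equiv c\,\phi_j$ on $\mathbb{C}$.

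It remains to translate this pointwise identity into the two conclusions. Since $\mathcal{G}(\xi)=[\overline{\phi_1}:\cdots:\overline{\phi_{n+2}}]$, the relation $\phi_i=c\,\phi_j$ gives $z_i=\overline{c}\,z_j$ on $\mathcal{G}(\Sigma)$; setting $\lambda_{(i,j)}:=\overline{c}\in\mathbb{C}\setminus\mathbb{R}$ produces (a). For (b), the chain rule applied to $x=u$, $y=au+bv$ gives $\overline{\phi_k}=\partial_{\bar\xi} F_k=\tfrac{1}{2}\bigl(\partial_x F_k+\lambda\,\partial_y F_k\bigr)$ with $\lambda:=a+ib\in\mathbb{C}\setminus\mathbb{R}$, and substituting into $\overline{\phi_i}=\lambda_{(i,j)}\,\overline{\phi_j}$ yields the Cauchy--Riemann analogue in (b). The main technical point is the Jacobian identity in the isothermal parameter together with the bookkeeping of complex conjugates that ties $\lambda$ to $\lambda_{(i,j)}$; the Liouville step itself is standard once parabolicity of $\Sigma$ has been secured via Lemma \ref{oentire}(b).
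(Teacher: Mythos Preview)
Your argument is correct and follows the same core strategy as the paper: pass to a global isothermal parameter on $\mathbb{C}$, observe that the sign hypothesis on $\mathcal{J}_{i,j}$ forces $\mathrm{Im}(\phi_i\overline{\phi_j})$ to have fixed nonzero sign, and then apply Liouville to the entire function $\phi_i/\phi_j$ to conclude it is a non-real constant.

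The only packaging difference is in how the isothermal parameter is obtained and how part (b) is extracted. The paper reconstructs the conformal coordinate explicitly via Poincar\'{e}'s Lemma, setting $(\xi_1,\xi_2)=(x+M,y+N)$ with $M_x=E/\omega$, $M_y=N_x=F/\omega$, $N_y=G/\omega$, and then derives (b) by computing the Gauss map directly in the graph coordinates $(x,y)$ (this yields the formula of Proposition~\ref{GM} as a byproduct, and identifies $\lambda$ through the relation $i-F/\omega=\lambda\,G/\omega$). You instead invoke Lemma~\ref{oentire}(b) to get a \emph{linear} isothermal chart $x=u$, $y=au+bv$, which makes the chain-rule bookkeeping for (b) immediate: $\overline{\phi_k}=\tfrac12(\partial_x F_k+\lambda\,\partial_y F_k)$ with $\lambda=a+ib$. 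Your route is a bit more economical; the paper's route is self-contained and produces the explicit Gauss-map formula used later. Both rest on the same Osserman parabolicity result and the same Liouville step.
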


\begin{proof} \textbf{Step A.} Following the arguments in \cite{Oss86}, we prepare the global conformal coordinate chart for the entire graph $\Sigma$.  We begin with the two identities:
 \[
 \frac{\partial }{\partial x} \left(  \frac{F}{\omega}   \right) =  \frac{\partial }{\partial y} \left( \frac{E}{\omega}   \right) \quad \text{and} \quad
 \frac{\partial }{\partial x} \left( \frac{G}{\omega} \right) = \frac{\partial }{\partial y} \left( \frac{F}{\omega} \right).
 \]
 Since ${\mathbb{R}}^{2}$ is simply connected, Poincar\'{e} Lemma guarantees the existence of functions
 $M, N:{\mathbb{R}}^{2} \rightarrow \mathbb{R}$ satisfying the equalities
 \[
 M_{x} = \frac{E}{\omega}, M_{y} = \frac{F}{\omega}, N_{x} = \frac{F}{\omega}, N_{y} = \frac{G}{\omega}.
 \]
 We then introduce the coordinate transformation
 \[
  \Psi: (x,y) \mapsto \left( {\xi}_{1}, {\xi}_{2} \right):=\left(x+M(x,y),
 y+N(x,y) \right).
 \]
 One computes the Jacobian determinant of the transformation  $\Psi$:
 \[
   J_{\Psi}= \frac{ \partial \left(  {\xi}_{1}, {\xi}_{2} \right)}{\partial (x, y)} = \det
   \begin{pmatrix}    1+ \frac{E}{\omega}   &   \frac{F}{\omega}  \\     \frac{F}{\omega}  &  1+  \frac{G}{\omega}
   \end{pmatrix}    = 2+ \frac{E+G}{\omega}>2.
 \]
 Since $ J_{\Psi}=\frac{ \partial \left(  {\xi}_{1}, {\xi}_{2} \right)}{\partial (x, y)}>0$, we obtain the existence of
 the local inverse $\left( {\xi}_{1}, {\xi}_{2} \right) \mapsto (x,y)$.
  Furthermore, Osserman \cite{Oss86} proved that the mapping
  \[
  \Psi: (x,y) \mapsto \left( {\xi}_{1}, {\xi}_{2} \right)
  \]
  becomes a diffeomorphism from ${\mathbb{R}}^{2}$ to itself. A straightforward computation shows that
  $\xi=\xi_{1}+i\xi_{2}$ becomes a global isothermal parameter on $\Sigma$:
 \[
 ds_{\Sigma}^{2}=\frac{\omega}{J_{\Psi}} \left( d{{\xi}_{1}}^{2} +  d{{\xi}_{2}}^{2} \right).
\]
Now, the  conformal immersion $X:=\mathbf{\Phi} \circ {\Psi}^{-1}: {\mathbb{R}}^{2} \rightarrow {\mathbb{R}}^{n+2}$:
\[
X \left(  {\xi}_{1}, {\xi}_{2} \right) = \sum_{k=1}^{n+2} F_{k}(x \left(  {\xi}_{1}, {\xi}_{2} \right),y \left(  {\xi}_{1}, {\xi}_{2} \right)) {\mathbf{e}}_{k}
\]
induces the entire holomorphic functions
\[
  {\phi}_{k}(\xi)= 2 \frac{\partial F_{k}}{\partial \xi}, \quad k \in \{1, \cdots, n\}.
\]
 \textbf{Step B.} (Proof of (a)) According to the assumption  that the Jacobian determinant 
\[ 
 {\mathcal{J}}_{i,j}:= \frac{ \partial \left( {F}_{i}, {F}_{j} \right)}{\partial (x, y)}
 \]
  is positive or negative on the entire plane ${\mathbb{R}}^{2}$, we find that
\[
\mathrm{Im}\left( {\phi}_{i} \overline{{\phi}_{j}} \, \right) =
\frac{\partial (F_{i}, F_{i})}{\partial \left( {\xi}_{1}, {\xi}_{2} \right)}
=  \frac{\partial (F_{i}, F_{j})}{\partial (x,y)} \frac{\partial (x,y)}{\partial \left(  {\xi}_{1}, {\xi}_{2} \right)} = \frac{{\mathcal{J}}_{i,j}}{J_{\Psi}}
\]
is positive or negative on $\mathbb{C}$. In particular, two holomorphic functions ${\phi}_{i}$ and $ {\phi}_{j}$ have no zeros.
So, the holomorphic function $\frac{ {\phi}_{i} }{  {\phi}_{j} }$ is entire. Furthermore, the function
\[
\mathrm{Im}\left( \frac{{\phi}_{i}} { {\phi}_{j}} \, \right) =
   \frac{1}{ {\vert {\phi}_{j} \vert}^{2} } \mathrm{Im}\left( {\phi}_{i} \overline{ {\phi}_{j}} \, \right)
 \]
has the same (nowhere-zero) sign on the entire plane $\mathbb{C}$. Since the holomorphic function $\frac{ {\phi}_{i} }{  {\phi}_{j} }$ is entire, this means that $\frac{ {\phi}_{i} }{  {\phi}_{j} }$ becomes a non-zero constant function. Write
 ${\phi}_{i}= c {\phi}_{j}$ for some $c \in {\mathbb{C}}^{*}=\mathbb{C}-\{0\}$. Since $\mathrm{Im} \; c=\mathrm{Im}\left( \frac{{\phi}_{i}} { {\phi}_{j}} \, \right) \neq 0$, we know $c \in {\mathbb{C}}-\mathbb{R}$.
 By the definition of the Gauss map $\mathcal{G}$:
 \[
  [{z}_{1}: \cdots: {z}_{n+2} ] = \mathcal{G}(\xi)=\left[ \;   \overline{ \frac{\partial X}{\partial {\xi}} } \; \right]
= \left[\, \overline{{\phi}_{1}}, \cdots, \overline{{\phi}_{n+2}} \, \right],
\]
 we see that the image $\mathcal{G}\left(\Sigma\right)$ lies on the hyperplane $z_{i} = \lambda z_{j}$ with $\lambda=\overline{c}\in {\mathbb{C}}-\mathbb{R}$. \\
 \textbf{Step C.} (Proof of (b)) We recall that the coordinate change $\Psi: (x,y) \mapsto \left( {\xi}_{1}, {\xi}_{2} \right)$
 gives the conformal immersion $X=\mathbf{\Phi} \circ {\Psi}^{-1}$. The Chain Rule yields
\[
   \frac{G}{\omega} \frac{\partial {\mathbf{\Phi}} }{\partial x}  +
  \left(  i - \frac{F}{\omega} \right)   \frac{\partial {\mathbf{\Phi}} }{\partial y}
   = \left(  1+ \frac{G}{\omega} + i \frac{F}{\omega} \right)
    \left( \frac{\partial {  {X}} }{\partial {{\xi}_{1}}} + i \frac{\partial { {X}} }{\partial {{\xi}_{2}}} \right),
  \]
 which means that the graph $\mathbf{\Phi}(x,y)$ has the Gauss map
\begin{eqnarray*}
  \mathcal{G}(x,y)&=& \left[ \;   \overline{ \frac{\partial X}{\partial {\xi}} } \; \right] \\
 &=& \left[\frac{G}{\omega} \frac{\partial {\mathbf{\Phi}} }{\partial x} +
  \left(  i - \frac{F}{\omega} \right)   \frac{\partial {\mathbf{\Phi}} }{\partial y} \right] \\
   &=& \left[   \frac{G}{\omega} \frac{\partial F_{1}}{\partial x} +
  \left(  i - \frac{F}{\omega} \right)   \frac{\partial F_{1}}{\partial y}, \cdots,
    \frac{G}{\omega} \frac{\partial F_{n+2}}{\partial x} +
  \left(  i - \frac{F}{\omega} \right)   \frac{\partial F_{n+2}}{\partial y}  \right] \\
   &=& \left[  \frac{G}{\omega},  i- \frac{F}{\omega} ,  \frac{G}{\omega} \frac{\partial f_{1}}{\partial x} +
  \left(  i - \frac{F}{\omega} \right)   \frac{\partial f_{1}}{\partial y}, \cdots, \frac{G}{\omega} \frac{\partial f_{n}}{\partial x} + \left(  i - \frac{F}{\omega} \right)   \frac{\partial f_{n}}{\partial y} \right].
    \end{eqnarray*}
 Since $z_{2}= {\lambda} z_{1}$ on the Gauss map image $\mathcal{G}\left(\Sigma\right)$  for some constant ${\lambda}  \in {\mathbb{C}}-{\mathbb{R}}$, the equality $i- \frac{F}{\omega} = {\lambda} \frac{G}{\omega}$
 holds on $\mathbb{C}$. Since  $z_{i} = {\lambda}_{(i,j)} z_{j}$ on  $\mathcal{G}\left(\Sigma\right)$, we obtain
 \[
 \frac{G}{\omega} \frac{\partial f_{i}}{\partial x} + \left(  i - \frac{F}{\omega} \right)   \frac{\partial f_{i}}{\partial y} = {\lambda}_{(i,j)} \left( \frac{G}{\omega} \frac{\partial f_{j}}{\partial x} + \left(  i - \frac{F}{\omega} \right)   \frac{\partial f_{j}}{\partial y} \right).
 \]
Since $i- \frac{F}{\omega} = {\lambda} \frac{G}{\omega}$ and  $\frac{G}{\omega}>0$, it reduces to
$\frac{\partial F_{i}}{\partial x} + \lambda  \frac{\partial F_{i}}{\partial y}
    = {\lambda}_{(i,j)} \left(   \frac{\partial F_{j}}{\partial x} + \lambda  \frac{\partial F_{j}}{\partial y} \right)$.
\end{proof}

The above proof of Lemma \ref{entire} contains an explicit formula for the  Gauss map:

\begin{prop} \label{GM}
 The minimal graph $(x,y) \in \Omega \subset {\mathbb{R}}^{2} \mapsto \mathbf{\Phi}(x,y)= x{\mathbf{e}}_{1}+y{\mathbf{e}}_{2}+f_{1}(x,y){\mathbf{e}}_{3} +\cdots+f_{n}(x,y){\mathbf{e}}_{n+2}$  in ${\mathbb{R}}^{n+2}$ has the Gauss map $\mathcal{G}:\Omega \rightarrow {\mathcal{Q}}_{n} \subset {\mathbb{C}}{\mathbb{P}}^{n+1}$:
\begin{eqnarray*}
  \mathcal{G}(x,y)  &=& \left[  \frac{G}{\omega}, \;  i- \frac{F}{\omega} ,  \frac{G}{\omega} \frac{\partial f_{1}}{\partial x} +  \left(  i - \frac{F}{\omega} \right)   \frac{\partial f_{1}}{\partial y}, \cdots, \frac{G}{\omega} \frac{\partial f_{n}}{\partial x} + \left(  i - \frac{F}{\omega} \right)   \frac{\partial f_{n}}{\partial y} \right]  \\
 &=& \left[  1-i\frac{F}{\omega}, i\frac{E}{\omega},  \left(  1 - i \frac{F}{\omega} \right) \frac{\partial f_{1}}{\partial x} + i \frac{E}{\omega}  \frac{\partial f_{1}}{\partial y}, \cdots,  \left(  1 - i \frac{F}{\omega} \right) \frac{\partial f_{n}}{\partial x} + i \frac{E}{\omega} \frac{\partial f_{n}}{\partial y}  \right].
    \end{eqnarray*}
Here, $ds_{\mathbf{\Phi}}^{2}=E dx^{2}+2F dx dy + Gdy^{2}$ is the induced metric of the graph $\mathbf{\Phi}$.
\end{prop}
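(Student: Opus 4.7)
The plan is to observe that this proposition essentially repackages the chain-rule computation already carried out in Step C of the proof of Lemma \ref{entire}; the only additional content is a short algebraic manipulation showing the two displayed projective representatives coincide. First I would re-run the coordinate change $(x,y) \mapsto (\xi_1, \xi_2) = (x + M, y + N)$ with $M_x = E/\omega$, $M_y = F/\omega$, $N_x = F/\omega$, $N_y = G/\omega$ on the simply-connected domain $\Omega$ (the Poincar\'{e} Lemma delivers $M, N$ exactly as in Lemma \ref{entire}) so that $\xi = \xi_1 + i\xi_2$ is locally isothermal for $\mathbf{\Phi}$. The chain rule then yields
\[
\frac{G}{\omega}\frac{\partial \mathbf{\Phi}}{\partial x} + \left(i - \frac{F}{\omega}\right)\frac{\partial \mathbf{\Phi}}{\partial y} = \left(1 + \frac{G}{\omega} + i\frac{F}{\omega}\right)\left(\frac{\partial X}{\partial \xi_1} + i\frac{\partial X}{\partial \xi_2}\right),
\]
exactly as in Step C. Because $1 + G/\omega + iF/\omega$ has positive real part and so never vanishes, it can be dropped projectively, giving $\mathcal{G} = \left[\,G/\omega \cdot \partial_x \mathbf{\Phi} + (i - F/\omega)\partial_y \mathbf{\Phi}\,\right]$. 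Substituting $\partial_x \mathbf{\Phi} = \mathbf{e}_1 + \sum_k (\partial f_k/\partial x)\mathbf{e}_{k+2}$ and $\partial_y \mathbf{\Phi} = \mathbf{e}_2 + \sum_k (\partial f_k/\partial y)\mathbf{e}_{k+2}$ then yields the first displayed formula for $\mathcal{G}$ coordinate by coordinate.

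For the equivalence with the second expression, I would multiply the projective representative by the nonzero scalar $(\omega - iF)/G$. The first entry becomes $(\omega - iF)/\omega = 1 - iF/\omega$, while the second entry is handled by the identity
\[
\frac{\omega - iF}{G}\cdot \frac{i\omega - F}{\omega} = \frac{(\omega - iF)(i\omega - F)}{G\omega} = \frac{i(\omega^2 + F^2)}{G\omega} = \frac{iEG}{G\omega} = \frac{iE}{\omega},
\]
which uses the algebraic identity $\omega^2 + F^2 = (EG - F^2) + F^2 = EG$. The same scalar converts every coefficient of $\partial f_k/\partial x$ from $G/\omega$ to $1 - iF/\omega$ and every coefficient of $\partial f_k/\partial y$ from $i - F/\omega$ to $iE/\omega$, producing the second stated form.

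There is essentially no obstacle here; the content beyond Lemma \ref{entire} is arithmetic. The only point requiring a modicum of care is justifying the local isothermal construction on a general simply-connected $\Omega$ rather than on $\mathbb{R}^2$, but the Poincar\'{e} Lemma combined with the positivity $J_\Psi = 2 + (E+G)/\omega > 2$ gives local invertibility of $\Psi$ at every point, and that is all that is needed, since the formula for $\mathcal{G}(x,y)$ is pointwise and invariant under conformal reparametrization. Thus the proposition follows by bookkeeping from Step C.
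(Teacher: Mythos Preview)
Your proposal is correct and follows the paper's approach for the first displayed representative (which is literally Step C of Lemma \ref{entire}). For the second representative, the paper takes a slightly different route: rather than multiplying the first projective representative by the scalar $(\omega - iF)/G$ as you do, it writes down a second, parallel chain-rule identity
\[
\left(1 - i\frac{F}{\omega}\right)\frac{\partial\mathbf{\Phi}}{\partial x} + i\frac{E}{\omega}\frac{\partial\mathbf{\Phi}}{\partial y} = \left(1 + \frac{E}{\omega} - i\frac{F}{\omega}\right)\left(\frac{\partial X}{\partial\xi_1} + i\frac{\partial X}{\partial\xi_2}\right),
\]
so that the second representative is obtained directly rather than by rescaling the first. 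Both arguments are short and equivalent in strength; your rescaling is arguably more economical since it avoids a second chain-rule computation and uses only the algebraic identity $\omega^2 + F^2 = EG$, while the paper's version makes the symmetry between the two representatives (swap the roles of $E$ and $G$) more visible.
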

\begin{proof}
In the notations in the proof of Lemma \ref{entire}, the second identity follows from
\[
 \left(  1 - i \frac{F}{\omega} \right)   \frac{\partial {\mathbf{\Phi}} }{\partial x}  +  i \frac{E}{\omega}
  \frac{\partial {\mathbf{\Phi}} }{\partial y}  = \left(  1+ \frac{E}{\omega} - i \frac{F}{\omega} \right)
    \left( \frac{\partial {  {X}} }{\partial {{\xi}_{1}}} + i \frac{\partial { {X}} }{\partial {{\xi}_{2}}} \right).
\]
\end{proof}

Using Lemma \ref{entire} and Proposition \ref{GM}, we present a minimal-surface proof of

\begin{corollary}[\textbf{J\"{o}rgens Theorem}, \cite{Jo54}] \label{Jo}
 The only entire  $\mathcal{C}^{3}$ functions satisfying the unimodular Hessian equation
$F_{xx}F_{yy}-{F_{xy}}^{2}=1$ are quadratic polynomial functions.
\end{corollary}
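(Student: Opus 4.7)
The plan is to reinterpret the unimodular Hessian equation as a minimality condition via Harvey-Lawson, apply the Extended Osserman's Lemma to the resulting entire minimal graph in ${\mathbb{R}}^{4}$, and then extract constancy of $D^{2}F$ from the resulting complex-linear relation among the second partials.

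First I would record the sign dichotomy: from $F_{xx}F_{yy} = 1 + {F_{xy}}^{2} \geq 1$ the entries $F_{xx}$ and $F_{yy}$ are nowhere zero and share a common sign on the whole plane, so after replacing $F$ by $-F$ if necessary I may assume $F$ is strictly convex. Then Harvey-Lawson identifies the gradient graph
\[
  \mathbf{\Phi}(x,y) = x{\mathbf{e}}_{1} + y{\mathbf{e}}_{2} + F_{x}(x,y){\mathbf{e}}_{3} + F_{y}(x,y){\mathbf{e}}_{4}
\]
as a special Lagrangian submanifold of ${\mathbb{R}}^{4} \cong {\mathbb{C}}^{2}$ with constant phase $\frac{\pi}{2}$ (because $\arctan{\lambda_{1}} + \arctan{\lambda_{2}} \equiv \frac{\pi}{2}$ whenever the positive Hessian eigenvalues satisfy $\lambda_{1}\lambda_{2}=1$); in particular $\mathbf{\Phi}$ is an entire $2$-dimensional minimal graph in ${\mathbb{R}}^{4}$ with height functions $f_{1}=F_{x}$ and $f_{2}=F_{y}$. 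The Jacobian of the coordinate pair $(F_{3},F_{4}) = (F_{x},F_{y})$ is then $\mathcal{J}_{3,4} = F_{xx}F_{yy} - {F_{xy}}^{2} \equiv 1$, strictly positive on all of ${\mathbb{R}}^{2}$.

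Now I would apply Lemma \ref{entire}(b) to the pair $(i,j) = (3,4)$. This supplies constants $\lambda, \mu \in {\mathbb{C}} - {\mathbb{R}}$ with
\[
  F_{xx} + \lambda F_{xy} = \mu \left( F_{xy} + \lambda F_{yy} \right)
\]
on all of ${\mathbb{R}}^{2}$, equivalently $F_{xx} + (\lambda-\mu) F_{xy} - \lambda\mu F_{yy} \equiv 0$. Separating real and imaginary parts, and remembering that $F_{xx}, F_{xy}, F_{yy}$ are real-valued, produces two real linear identities among the three second partials with constant coefficients.

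A short algebraic case analysis then finishes the argument: the imaginary part yields a proportionality $F_{xy} = c_{1} F_{yy}$, and substitution into the real part gives $F_{xx} = c_{2} F_{yy}$ for real constants $c_{1}, c_{2}$, so that $F_{xx}F_{yy} - {F_{xy}}^{2} = 1$ reduces to $(c_{2} - {c_{1}}^{2}) {F_{yy}}^{2} \equiv 1$. This forces $F_{yy}$ to be a nowhere-zero continuous function of constant modulus and hence a constant; the other second partials are then constant as well, and $F$ is a quadratic polynomial. I expect the main obstacle to be ruling out the degenerate subcase in which $\operatorname{Im}(\lambda-\mu)$ and $\operatorname{Im}(\lambda\mu)$ both vanish (where the imaginary part of the relation is trivially satisfied and gives no information): a direct substitution in this subcase produces an identity of the shape $-(F_{xy} + a F_{yy})^{2} - b^{2} {F_{yy}}^{2} \equiv 1$ for some real $a$ and nonzero real $b$, whose left side is non-positive, contradicting the unimodular Hessian equation and thereby excluding this subcase.
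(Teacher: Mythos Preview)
Your proposal is correct and rests on the same two key inputs as the paper --- Harvey--Lawson to realise the gradient graph as an entire minimal surface in ${\mathbb{R}}^{4}$, and the Extended Osserman Lemma applied to the pair $(3,4)$ with ${\mathcal{J}}_{3,4}\equiv 1$ --- but the way you \emph{extract} the conclusion is genuinely different.

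The paper does not use part~(b) of Lemma~\ref{entire} at all. Instead it computes the Gauss map of the gradient graph explicitly via Proposition~\ref{GM}, exploiting the algebraic miracle that for a solution of the unimodular Hessian equation one has $\omega=\lvert F_{xx}+F_{yy}\rvert$, so that $\frac{F}{\omega}=\epsilon F_{xy}$ and $\frac{G}{\omega}=\epsilon F_{yy}$; this yields $\mathcal{G}=[\epsilon F_{yy}:i-\epsilon F_{xy}:\epsilon+iF_{xy}:iF_{yy}]$, from which one reads off \emph{two} hyperplane relations $z_{2}=i\epsilon z_{3}$ and $z_{4}=i\epsilon z_{1}$ coming purely from the Lagrangian structure. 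Part~(a) of Lemma~\ref{entire}, applied to both pairs $(1,2)$ and $(3,4)$, supplies two further independent hyperplane relations, and four hyperplanes in ${\mathbb{CP}}^{3}$ force $\mathcal{G}$ to be constant.

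Your route bypasses the explicit Gauss map computation entirely: you take the single complex-linear relation $F_{xx}+(\lambda-\mu)F_{xy}-\lambda\mu F_{yy}=0$ from part~(b), split into real and imaginary parts, and argue algebraically that $D^{2}F$ must be constant. The generic case and the degenerate subcase (where $\operatorname{Im}(\lambda-\mu)=\operatorname{Im}(\lambda\mu)=0$ forces $\mu=-\overline{\lambda}$, whence $1=-(F_{xy}+aF_{yy})^{2}-b^{2}F_{yy}^{2}\le 0$) are both handled correctly. The intermediate possibility $\operatorname{Im}(\lambda-\mu)=0$ but $\operatorname{Im}(\lambda\mu)\neq 0$ forces $F_{yy}\equiv 0$, contradicting $F_{xx}F_{yy}\ge 1$; you might make this one line explicit. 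What the paper's approach buys is a cleaner geometric picture (constant Gauss map $\Rightarrow$ plane) at the cost of the metric computation; what your approach buys is that you never have to touch $E,F,G,\omega$ at all.
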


\begin{proof} \label{PFjo}  According to the Harvey-Lawson Theorem \cite{HL82}, the surface $\Sigma$
 \[
{F}_{1}(x,y){\mathbf{e}}_{1}+{F}_{2}(x,y){\mathbf{e}}_{2}+{F}_{3}(x,y){\mathbf{e}}_{3} +{F}_{4}(x,y){\mathbf{e}}_{4} = x{\mathbf{e}}_{1}+y{\mathbf{e}}_{2}+F_{x}{\mathbf{e}}_{3} +F_{y}{\mathbf{e}}_{4}
 \]
 becomes an entire minimal graph in  ${\mathbb{R}}^{4}$. We can use Proposition \ref{GM} to show that its Gauss map $\mathcal{G}:\Sigma \rightarrow {\mathcal{Q}}_{2} \subset {\mathbb{C}}{\mathbb{P}}^{3}$ reads, for some constant
 $\epsilon \in \{-1, 1\}$,
\[
  \mathcal{G}(x,y) = \left[z_{1}, z_{2}, z_{3}, z_{4} \right]
 = \left[  \epsilon F_{yy}, \;  i- \epsilon F_{xy} ,  \epsilon + i F_{xy} ,   i  F_{yy} \right].
\]
The Gauss map image  $\mathcal{G}\left(\Sigma\right)$ lies on two hyperplanes $z_{2}= i \epsilon z_{3}$ and $z_{4}= i \epsilon z_{1}$. Since Jacobians  ${\mathcal{J}}_{1,2}:= \frac{ \partial \left( {{F}}_{1}, {{F}}_{2} \right)}{\partial (x, y)}
=1$ and ${\mathcal{J}}_{3,4}:= \frac{ \partial \left( {{F}}_{3}, {{F}}_{4} \right)}{\partial (x, y)} =F_{xx}F_{yy}-{F_{xy}}^{2}=1$ are positive on the whole plane ${\mathbb{R}}^{2}$, (a) in Lemma \ref{entire} implies that the Gauss map image
$\mathcal{G}\left(\Sigma\right)$ lies on two hyperplanes $z_{1} = {\lambda}_{(1,2)} z_{2}$ and $z_{3} = {\lambda}_{(3,4)} z_{4}$ for some constants ${\lambda}_{(1,2)}  \in {\mathbb{C}}-\mathbb{R}$ and ${\lambda}_{(3,4)}  \in {\mathbb{C}}-\mathbb{R}$. We conclude that its Gauss map $\mathcal{G}$ is constant.
\end{proof}

\begin{remark} We here verify the claim on the Gauss map appeared in the above proof:
\[
 \mathcal{G}(x,y) = \left[   \epsilon F_{yy}, \;  i- \epsilon F_{xy} ,  \epsilon + i F_{xy} ,   i  F_{yy}  \right].
\]
 We need to compute the pullback metric ${ds}^{2}$ of the graph $\Sigma$:
\begin{eqnarray*}
 {ds}^{2}&=&   E dx^{2}+2F dx dy + Gdy^{2} \\
 &=& \left(1 + {F_{xx}}^{2} +   {F_{xy}}^{2} \right) dx^{2}  + 2 F_{xy}\left( F_{xx}+F_{yy} \right)dx dy + \left( 1 + {F_{xy}}^{2} +   {F_{yy}}^{2} \right) dy^{2}.
 \end{eqnarray*}
 Write $\omega=\sqrt{EG-F^{2}}>0$. It follows from $F_{xx}F_{yy}-{F_{xy}}^{2}=1$ that
 \begin{eqnarray*}
 \omega &=& \sqrt{ \left( 1 + {F_{xx}}^{2} +   {F_{xy}}^{2} \right)\left( 1 + {F_{xy}}^{2} +   {F_{yy}}^{2} \right)
 - \left( F_{xx} F_{xy} + F_{xy } F_{yy}  \right)^{2}      }  \\
 &=& \sqrt{ \left( {F_{xx}}+{F_{yy}}\right)^{2}+ \left(1 - \left(  F_{xx}F_{yy}-{F_{xy}}^{2} \right) \right) ^{2}     } \\
 &=& \vert {F_{xx}}  +  {F_{yy}} \vert.
 \end{eqnarray*}
Since $F_{xx}F_{yy}-{F_{xy}}^{2}>0$, the continuous function $F_{xx}+F_{yy}$ vanishes nowhere.
We can find a constant  $\epsilon \in \{-1, 1\}$ satisfying   ${F_{xx}}  +  {F_{yy}} = \epsilon \omega$ on
${\mathbb{R}}^{2}$ and obtain
\[
  \frac{F}{\omega} = \frac{F_{xy}\left( F_{xx}+F_{yy} \right)}{\omega} = \epsilon F_{xy}
\]
and
\[
  \frac{G}{\omega} = \frac{1 + {F_{xy}}^{2} +   {F_{yy}}^{2}}{\omega} = \frac{F_{yy}\left( F_{xx}+F_{yy} \right)}{\omega} = \epsilon F_{yy}.
\]
Now, by Proposition \ref{GM}, the Gauss map of the entire minimal graph  $\Sigma$ in ${\mathbb{R}}^{4}$ reads
\begin{eqnarray*}
  \mathcal{G}(x,y) &=& \left[  \frac{G}{\omega}, \;  i- \frac{F}{\omega} ,  \frac{G}{\omega} F_{xx} + \left(  i - \frac{F}{\omega} \right)  F_{xy},  \frac{G}{\omega} F_{yx} + \left(  i - \frac{F}{\omega} \right)  F_{yy} \right]   \\
  &=&   \left[   \epsilon F_{yy}, \;  i- \epsilon F_{xy} ,  \epsilon + i F_{xy} ,   i  F_{yy}  \right].
 \end{eqnarray*}
\end{remark}

\begin{remark} The proof of Corollary \ref{Jo} used the Harvey-Lawson Theorem \cite{HL82} that the gradient graph of a  solution to unimodular Hessian equation in two variables is a special Lagrangian surface in ${\mathbb{C}}^{2}$, which
has the area-minimizing property. On the other hand, Warren \cite[Theorem 3.2]{War10} showed that
the gradient graph of a convex solution to unimodular Hessian equation becomes a spacelike volume-maximizing submanifold of the pseudo-Euclidean space endowed with a suitably chosen pseudo-metric. See also Mealy's earlier result \cite{Mea91},
\cite[Theorem 6.3]{HL10} and Section \ref{SyMA}.
\end{remark}

\begin{remark}
In Section \ref{SyMA}, we show that Bernstein type theorems for symplectic Monge-Amp\'{e}re equations in two variables can be reduced to J\"{o}rgens' Theorem. Other proofs of J\"{o}rgens' Theorem
 or Bernstein type theorem for special Lagrangian equations are given by Fu \cite{Fu98} and Yuan \cite{YU02}.
\end{remark}

 There are many applications of our extended Osserman's Lemma. One can show that if the height functions of entire $2$-dimensional minimal graphs in ${\mathbb{R}}^{n+2}$ are strictly monotone, they are planes:

\begin{theorem} \label{ent1}
 Let  $\Sigma$ be an entire minimal graph of $f=\left(f_{1}, \cdots, f_{n} \right):{\mathbb{R}}^{2} \rightarrow {\mathbb{R}}^{n}$.\\
 (a) Suppose that, for each $k \in \{1, \cdots, n\}$, one of the following inequalities holds\\
\[
  \frac{\partial f_{k}}{\partial x}>0,  \;\;  \frac{\partial f_{k}}{\partial x}<0, \;
  \frac{\partial f_{k}}{\partial y}>0,   \;\;  \frac{\partial f_{k}}{\partial y}<0
\]
  on the whole plane ${\mathbb{R}}^{2}$. Then, the entire graph $\Sigma$ in ${\mathbb{R}}^{n+2}$ is a plane. \\
 (b)  Suppose that, for some fixed $k \in \{1, \cdots, n\}$,  either $\frac{\partial f_{k}}{\partial x}$ or $\frac{\partial f_{k}}{\partial y}$ is a non-zero constant function on the whole plane ${\mathbb{R}}^{2}$. Then, the function $f_{k}$ is affine.
\end{theorem}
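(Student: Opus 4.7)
The plan is to apply the Extended Osserman's Lemma (Lemma \ref{entire}) to a carefully chosen pair of coordinate functions $(F_i, F_j)$ for each height component $f_k$. Since $F_1(x,y) = x$ and $F_2(x,y) = y$, the relevant Jacobians collapse to
$$\mathcal{J}_{1, k+2} = \frac{\partial(x, f_k)}{\partial(x,y)} = \frac{\partial f_k}{\partial y}, \qquad \mathcal{J}_{2, k+2} = \frac{\partial(y, f_k)}{\partial(x,y)} = -\frac{\partial f_k}{\partial x}$$
for each $k \in \{1,\ldots,n\}$. Thus any definite-sign hypothesis on $\partial f_k/\partial x$ or $\partial f_k/\partial y$ directly furnishes the hypothesis of Lemma \ref{entire} for an appropriately chosen pair.

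For part (a), for each fixed $k$ I would choose $(i,j) = (2, k+2)$ when $\partial f_k/\partial x$ has constant sign on $\mathbb{R}^2$, and $(i,j) = (1, k+2)$ when $\partial f_k/\partial y$ does. Substituting $F_1 = x$ (so $(\partial_x F_1, \partial_y F_1) = (1,0)$) or $F_2 = y$ (so $(\partial_x F_2, \partial_y F_2) = (0,1)$) into the Cauchy--Riemann analogue supplied by Lemma \ref{entire}(b) reduces the left-hand side to the scalar $1$ or $\lambda$, producing a relation of the form
$$\frac{\partial f_k}{\partial x} + \lambda\, \frac{\partial f_k}{\partial y} = c_k$$
for some complex constant $c_k$ and the common constant $\lambda \in \mathbb{C} - \mathbb{R}$ given by the lemma. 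Writing $\lambda = \alpha + i\beta$ with $\beta \neq 0$, comparing imaginary parts of this identity forces $\partial f_k/\partial y$ to be a real constant; the real part then forces $\partial f_k/\partial x$ to be a real constant as well. Hence each $f_k$ is affine, so $\Sigma$ is a plane.

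Part (b) is then a direct specialization: a non-zero constant value of $\partial f_k/\partial x$ (respectively $\partial f_k/\partial y$) has constant sign on $\mathbb{R}^2$, so the same application of Lemma \ref{entire}(b) with $(i,j) = (2, k+2)$ (respectively $(1, k+2)$) and the same imaginary/real-part splitting force both first partial derivatives of $f_k$ to be real constants, i.e.\ $f_k$ is affine.

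The main subtlety worth pointing out is that the constant $\lambda$ in Lemma \ref{entire}(b) must be the same for every pair $(i,j)$ used across the $k$'s in part (a). This is intrinsic to $\Sigma$: $\lambda$ originates from the universal Osserman identity $z_2 = \lambda z_1$ of Lemma \ref{oentire}(a) on the Gauss-map image and is determined by $\Sigma$ alone, whereas $\lambda_{(i,j)}$ depends on the pair. This consistency is precisely what allows the imaginary-part extraction to be applied uniformly across all indices $k$.
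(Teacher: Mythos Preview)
Your argument is correct. For part (b) it coincides with the paper's proof. For part (a) you take a slightly different path through the same Extended Osserman's Lemma: the paper invokes part (a) of Lemma \ref{entire} for each $k$ to trap the Gauss map image on hyperplanes $z_{k+2}=\alpha_{k}z_{1}$ (together with $z_{2}=\lambda z_{1}$) and concludes that the Gauss map is constant, hence $\Sigma$ is a plane; you instead apply part (b) of the lemma and read off the real and imaginary parts of $\partial_{x}f_{k}+\lambda\,\partial_{y}f_{k}=c_{k}$ (using $\mathrm{Im}\,\lambda\neq 0$) to force both partials of each $f_{k}$ to be constant. Your route is a touch more computational and has the virtue of treating (a) and (b) by a single mechanism; the paper's route is more geometric and makes the Gauss-map degeneracy explicit. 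One small comment: your final remark slightly overstates the role of the uniformity of $\lambda$. While it is true (and worth noting) that $\lambda$ comes from the intrinsic relation $z_{2}=\lambda z_{1}$ and is therefore the same for every $k$, your real/imaginary splitting only uses $\mathrm{Im}\,\lambda\neq 0$ and works independently for each $k$; the argument would go through even if the $\lambda$'s differed.
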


\begin{proof}
  (a) By  Lemma \ref{oentire} or Lemma \ref{entire}, there exists a constant ${\lambda}  \in {\mathbb{C}}^{*}$ such that the equality $z_{2}= {\lambda} z_{1}$  holds on the Gauss map image $\mathcal{G}\left(\Sigma\right)$. We fix $k \in \{1, \cdots, n\}$. \\
  (a1) When $\frac{\partial f_{k}}{\partial y}= \frac{ \partial \left( x, {f}_{k} \right)}{\partial (x, y)}$ never vanishes, take $(i,j)=(k+2,1)$ in (a) of Lemma \ref{entire}. Thus,  $\mathcal{G}\left(\Sigma\right)$ lies on a hyperplane $z_{k+2} = {\lambda}_{k+2} z_{1}$ for some constant ${\lambda}_{k+2}  \in {\mathbb{C}}^{*}$. \\
  (a2) In the case  when $\frac{\partial f_{k}}{\partial x}=\frac{ \partial \left(  {f}_{k},y \right)}{\partial (x, y)}$ never vanishes, take $(i,j)=(k+2,2)$ in (a) of Lemma \ref{entire}. So,  the Gauss map  $\mathcal{G}$ takes its values in the   hyperplane $z_{k+2} ={\lambda}_{k+2} z_{2}$ for some constant ${\lambda}_{k+2}  \in {\mathbb{C}}^{*}$. We see that  $\mathcal{G}\left(\Sigma\right)$  lies on a hyperplane $z_{k+2} = \left({\lambda}_{k+2} {\lambda}\right) z_{1}$.\\
   We conclude that, for each $k \in \{2, \cdots, n\}$, the Gauss map image $\mathcal{G}\left(\Sigma\right)$
  lies on some hyperplane  $z_{k} = {\alpha}_{k} z_{1}$. Hence, the Gauss map   $\mathcal{G}=[{z}_{1}: \cdots: {z}_{n+2} ]$ is constant. \\
 (b)  We consider the two cases. \\ (b1) If $\frac{\partial f_{k}}{\partial y}$  is a non-zero constant function, we can  take $(i,j)=(k+2,1)$ in (b) of Lemma \ref{entire} to obtain  $\frac{\partial f_{k}}{\partial x} + \lambda \frac{\partial f_{k}}{\partial y} =\frac{1}{{\lambda}_{k+2}}$ for some constant $\lambda \in  {\mathbb{C}}^{*}$. Hence, $\frac{\partial f_{k}}{\partial x}$  is also a constant function. \\ (b2) In the case when $\frac{\partial f_{k}}{\partial x}$  is a non-zero constant function, we  take $(i,j)=(k+2,2)$ in (b) of Lemma \ref{entire} to write  $\frac{\partial f_{k}}{\partial x} + \lambda \frac{\partial f_{k}}{\partial y} =\frac{\lambda}{{\lambda}_{k+2}}$  for some constant $\lambda \in  {\mathbb{C}}^{*}$.
 Thus, $\frac{\partial f_{k}}{\partial y}$  is  also  constant.
\end{proof}

\begin{corollary}  \label{ent3}
 Let $\Sigma$ be an entire minimal graph $\mathbf{\Phi}(x,y)=x {\mathbf{e}}_{1}+y{\mathbf{e}}_{2}+ f(x,y) {\mathbf{e}}_{3} +  g(x,y)  {\mathbf{e}}_{4}$ in ${\mathbb{R}}^{4}$. If $\frac{\partial f}{\partial x}$ is a non-zero constant function,
 then   $\Sigma$ is a plane.
\end{corollary}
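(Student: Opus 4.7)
The plan is to reduce the statement to the classical Bernstein Theorem in $\mathbb{R}^{3}$ by exploiting the fact that $f$ being affine forces $\Sigma$ to live inside a hyperplane.

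First, I would apply part (b) of Theorem \ref{ent1} to the coordinate function $f$. Since $\frac{\partial f}{\partial x}$ is a non-zero constant on the whole plane $\mathbb{R}^{2}$, Theorem \ref{ent1}(b) gives that $\frac{\partial f}{\partial y}$ is also constant, so $f$ is an affine function: $f(x,y)=ax+by+c$ for some $a,b,c\in\mathbb{R}$ with $a\neq 0$. Consequently, the entire graph $\Sigma$ is contained in the $3$-dimensional affine hyperplane
\[
H := \{(x_{1},x_{2},x_{3},x_{4})\in\mathbb{R}^{4} : x_{3}=ax_{1}+bx_{2}+c\}.
\]

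Second, I would promote $\Sigma$ to a minimal surface inside $H$. Since $H$ is a totally geodesic submanifold of the flat space $\mathbb{R}^{4}$, the mean curvature vector of $\Sigma$ as a submanifold of $\mathbb{R}^{4}$ agrees with its mean curvature vector computed inside $H$. Thus minimality of $\Sigma$ in $\mathbb{R}^{4}$ is equivalent to minimality of $\Sigma$ inside $H$, and since the induced metric on $H$ is flat, $H$ is isometric to Euclidean $\mathbb{R}^{3}$ via an affine isometry $T:H\to\mathbb{R}^{3}$.

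Third, I would check that $T(\Sigma)$ is still an entire graph over a $2$-plane in $\mathbb{R}^{3}$. Concretely, parametrize $H$ by $(x,y,w)\mapsto(x,y,ax+by+c,w)$; the induced metric reads $(1+a^{2})dx^{2}+2ab\,dx\,dy+(1+b^{2})dy^{2}+dw^{2}$, which becomes standard Euclidean after a linear change $(x,y)\mapsto(u,v)$ on $\mathbb{R}^{2}$ diagonalizing the positive definite quadratic form $(1+a^{2})dx^{2}+2ab\,dx\,dy+(1+b^{2})dy^{2}$. Since this change of coordinates is a linear isomorphism of $\mathbb{R}^{2}$, the surface $\Sigma$ is transported to the entire graph $w=\tilde g(u,v)$ in $\mathbb{R}^{3}$, where $\tilde g$ is the composition of $g$ with the inverse affine map.

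Finally, I would invoke the classical Bernstein Theorem in $\mathbb{R}^{3}$: the only entire minimal graph in $\mathbb{R}^{3}$ is a plane. Hence $\tilde g$, and therefore $g$, is affine. Together with $f$ being affine, this shows $\Sigma$ is an affine $2$-plane in $\mathbb{R}^{4}$. The only genuinely non-routine step is verifying that entirety of the graph is preserved under the affine identification $H\cong\mathbb{R}^{3}$, but since the relevant change of variables is a linear automorphism of $\mathbb{R}^{2}$, this is automatic.
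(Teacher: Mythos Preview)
Your proposal is correct and follows essentially the same route as the paper: invoke Theorem~\ref{ent1}(b) to make $f$ affine, view $\Sigma$ as an entire minimal graph inside the totally geodesic hyperplane $H\cong\mathbb{R}^{3}$, and then apply the classical Bernstein Theorem. You simply spell out in more detail the step the paper compresses into the phrase ``can be viewed as an entire minimal graph in some affine subspace $\mathbb{R}^{3}$''; the paper also notes, as an alternative, that one may instead appeal to Simon's Theorem since $f$ has bounded gradient.
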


\begin{proof}
By (b) of Theorem \ref{ent1}, the function $f(x,y)$ is affine. So, the surface $\Sigma$ can be viewed
as an entire minimal graph in some affine subspace ${\mathbb{R}}^{3}$. Bernstein's Theorem says that
$\Sigma$ is a plane. (Alternatively, since the function $f$ has bounded gradient,  Simon's Theorem \cite{HSHV09, Sim77} implies that the graph $\Sigma$ is a plane.)
\end{proof}

\begin{corollary}  \label{ent2}
Let  $\Sigma$ be an entire minimal graph of  $f=\left(f_{1}, \cdots, f_{n} \right):{\mathbb{R}}^{2} \rightarrow {\mathbb{R}}^{n}$, $n \geq 2$. If the mappings $(x,y) \mapsto \left(f_{i}(x,y), f_{j}(x,y)\right)$ are diffeomorphisms from ${\mathbb{R}}^{2}$ to itself  for all pairs $(i,j)$ with $1 \leq i < j \leq n$, then $\Sigma$  is a plane.
\end{corollary}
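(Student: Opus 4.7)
The plan is to apply the Extended Osserman's Lemma to every pair of height functions and then combine the resulting linear relations with the defining equation of $\mathcal{Q}_{n}$.

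Since each $(f_{i},f_{j}):\mathbb{R}^{2}\to\mathbb{R}^{2}$ is a diffeomorphism, its Jacobian $\mathcal{J}_{i+2,j+2}$ is nowhere zero on $\mathbb{R}^{2}$, hence of constant sign by connectedness. Applying part (a) of Lemma \ref{entire} to the index pair $(i+2,j+2)$ for every $1\le i<j\le n$ confines $\mathcal{G}(\Sigma)$ to the subspace $z_{i+2}=c_{ij}z_{j+2}$ with $c_{ij}\in\mathbb{C}-\mathbb{R}$, so $z_{3},\dots,z_{n+2}$ are pairwise proportional. Together with the automatic relation $z_{2}=\lambda z_{1}$, $\lambda\in\mathbb{C}-\mathbb{R}$, coming from $\mathcal{J}_{1,2}=1$, the image $\mathcal{G}(\Sigma)$ lies on a single projective line in $\mathbb{CP}^{n+1}$.

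I then plan to intersect this line with $\mathcal{Q}_{n}$. Using the global conformal parameter $\xi$ from the proof of Lemma \ref{entire} and writing $\phi_{k}:=2\,\partial F_{k}/\partial\xi$, the relations above become $\phi_{2}=\bar{\lambda}\phi_{1}$ and $\phi_{k+2}=\mu_{k}\phi_{n+2}$, with $\mu_{k}\in\mathbb{C}-\mathbb{R}$ for $1\le k<n$ and $\mu_{n}=1$. The constant-sign conditions force every $\phi_{k+2}$ to be nowhere zero (since $\mathcal{J}_{j+2,n+2}=\mathrm{Im}(\phi_{j+2}\overline{\phi_{n+2}})/J_{\Psi}$ can vanish only where $\phi_{n+2}$ does), while the representative $z_{1}=G/\omega>0$ in Proposition \ref{GM} makes $\phi_{1}$ nowhere zero as well. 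The quadric identity $\sum_{k=1}^{n+2}\phi_{k}^{2}=0$ then reduces to
\[
(1+\bar{\lambda}^{2})\phi_{1}^{2}=-S\,\phi_{n+2}^{2},\qquad S:=\sum_{k=1}^{n}\mu_{k}^{2}.
\]
If $1+\bar{\lambda}^{2}\neq 0$, then necessarily $S\neq 0$ (otherwise $\phi_{1}\equiv 0$), so $\phi_{1}/\phi_{n+2}$ is an entire function with nonzero constant square, hence is itself a constant by connectedness. Every $\phi_{k}$ is then proportional to $\phi_{n+2}$, the Gauss map is constant, and $\Sigma$ is a plane.

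The main obstacle is the remaining case $1+\bar{\lambda}^{2}=0$, i.e.\ $\lambda=\pm i$, in which the quadric degenerates and imposes no relation between $\phi_{1}$ and $\phi_{n+2}$. Here Proposition \ref{GM} forces $E=G$ and $F=0$, so $(x,y)$ are themselves global isothermal coordinates and each $F_{k}$ is harmonic on $\mathbb{R}^{2}$; writing $F_{k}=\mathrm{Re}(\Psi_{k}(z))$ for an entire holomorphic $\Psi_{k}$, the proportionalities $\phi_{k+2}=\mu_{k}\phi_{n+2}$ integrate to $\Psi_{k+2}=\mu_{k}\Psi_{n+2}+c_{k}$. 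Putting $\mu_{1}=\alpha_{1}+i\beta_{1}$ with $\beta_{1}\neq 0$ and $\Psi_{n+2}=U+iV$, the map $(x,y)\mapsto(f_{1},f_{n})=(\alpha_{1}U-\beta_{1}V+\mathrm{Re}(c_{1}),\,U)$ factors through $(x,y)\mapsto\Psi_{n+2}(x+iy)=(U,V)$ via an $\mathbb{R}$-affine diffeomorphism of $\mathbb{R}^{2}$ (its matrix has determinant $\beta_{1}\neq 0$). The \emph{global} diffeomorphism hypothesis then forces $\Psi_{n+2}:\mathbb{C}\to\mathbb{C}$ to be a holomorphic bijection, hence affine: $\Psi_{n+2}(z)=Az+B$ with $A\neq 0$. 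Every $f_{k}=\mathrm{Re}(\mu_{k-2}\Psi_{n+2}+c_{k})$ is then affine on $\mathbb{R}^{2}$, and $\Sigma$ is a plane. This last step is the crux of the argument: without the surjectivity built into ``diffeomorphism onto $\mathbb{R}^{2}$'' one could not rule out a non-affine entire $\Psi_{n+2}$, and the Bernstein-type conclusion would fail.
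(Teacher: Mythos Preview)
Your proof is correct, but the paper takes a considerably shorter route. Rather than applying the Extended Osserman Lemma to every height-function pair and then splitting on whether $1+\bar\lambda^{2}$ vanishes, the paper invokes part (b) of the original Osserman Lemma once: there is a \emph{linear} change $(u,v)\mapsto(x,y)$ making $u+iv$ a global isothermal parameter, so each $f_{k}$ is harmonic in $(u,v)$. For any single pair $(i,j)$ the composition $(u,v)\mapsto(f_{i},f_{j})$ is then a harmonic diffeomorphism of $\mathbb{R}^{2}$ onto itself, and such maps are affine; since the first factor is linear, $(x,y)\mapsto(f_{i},f_{j})$ is affine too, and running over enough pairs makes every $f_{k}$ affine.

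Your Case~2 is essentially a hands-on proof of this ``harmonic diffeomorphisms of the plane are affine'' step (factoring through the entire holomorphic bijection $\Psi_{n+2}$ and using that bijective entire maps $\mathbb{C}\to\mathbb{C}$ are affine), while your Case~1 is extra work that the paper sidesteps entirely by passing to isothermal coordinates at the outset. The trade-off: the paper's argument is two lines but cites the classical harmonic-diffeomorphism result as a black box, whereas yours is self-contained down to Liouville-type facts about entire functions and makes the role of surjectivity onto $\mathbb{R}^{2}$ explicit. One small remark: your justification that $\phi_{1}$ is nowhere zero via ``$z_{1}=G/\omega>0$'' conflates projective representatives; the clean reason is that $\mathrm{Im}(\phi_{1}\overline{\phi_{2}})=\mathcal{J}_{1,2}/J_{\Psi}>0$, which already forces both $\phi_{1}$ and $\phi_{2}$ to be nonvanishing.
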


\begin{proof}
By (a) in Lemma \ref{oentire}, we can take a linear diffeomorphism from ${\mathbb{R}}^{2}$ to itself
\[
\left(u, v \right) \mapsto \left(x,y\right)=\left(u,au+bv\right)
\]
such that $u+iv$ becomes a global isothermal parameter for the graph $\Sigma$.  It follows that the composition map
\[
(u,v) \mapsto (x,y) \mapsto \left(f_{i}(x,y), f_{j}(x,y) \right)
\]
is the harmonic diffeomorphism from ${\mathbb{R}}^{2}$ to itself. Thus, it is linear. Since the first map $(u,v) \mapsto (x,y)$ is a linear diffeomorphism, the second map $(x,y) \mapsto \left(f_{i}(x,y), f_{j}(x,y) \right)$ is also linear.
\end{proof}

\begin{remark}   \label{OssUse}
Schoen \cite{Sch93} and Ni \cite{Ni02} proved the case $n=2$ in Corollary \ref{ent2}.
 Brendle and Warren \cite{BW10} proved that, given  two uniformly convex domains in ${\mathbb{R}}^{n}$ with smooth boundary, there exists a diffeomorphism  from the first domain to the second one such that its graph becomes a minimal Lagrangian
 submanifold of ${\mathbb{R}}^{n} \times {\mathbb{R}}^{n}$.
  \end{remark}

\subsection{Construction of special Lagrangian graphs in ${\mathbb{C}}^{2}$} \label{SR2R4}

  The Harvey-Lawson Theorem \cite{HL82} indicates that the special Lagrangian equation in ${\mathbb{C}}^{2}$ is the first integral of the minimal surface system for the gradient graph in ${\mathbb{R}}^{4}$.

\begin{theorem}[\textbf{From minimal graphs in ${\mathbb{R}}^{n+2}$ to special Lagrangian
graphs in ${\mathbb{C}}^{2}={\mathbb{R}}^{2} \times i {\mathbb{R}}^{2}$}]  \label{RnR4}
  Let $x{\mathbf{e}}_{1}+y{\mathbf{e}}_{2}+f_{1}(x,y){\mathbf{e}}_{3} +\cdots+f_{n}(x,y){\mathbf{e}}_{n+2}$  be a minimal
  graph  in ${\mathbb{R}}^{n+2}$ defined on the simply connected domain $\Omega \subset {\mathbb{R}}^{2}$ with the
  induced metric  $ds^{2}=E dx^{2}+2F dx dy + Gdy^{2}$ with $\omega=\sqrt{EG-F^{2}}$. Then, in ${\mathbb{R}}^{2} \times {\mathbb{R}}^{2}$  equipped with the standard symplectic structure, there exists a minimal
  gradient graph  $x{\mathbf{e}}_{1}+y{\mathbf{e}}_{2}+M(x,y){\mathbf{e}}_{3} +N(x,y){\mathbf{e}}_{4}$ satisfying the equalities
  \[
   M_{x} = \frac{E}{\omega}, M_{y} = \frac{F}{\omega},\;  N_{x} = \frac{F}{\omega}, \; N_{y} = \frac{G}{\omega}.
  \]
\end{theorem}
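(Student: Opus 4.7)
The plan is to construct $M$ and $N$ by integrating the 1-forms whose closedness is given by Proposition \ref{MSS2}(b), observe that the resulting map $(M,N)$ is a gradient, and then identify its potential as a solution of the unimodular Hessian equation so that Harvey--Lawson produces a special Lagrangian (in particular minimal) graph.

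First I would use Proposition \ref{MSS2}(b), which asserts the identities
\[
\frac{\partial}{\partial y}\!\left(\frac{E}{\omega}\right)=\frac{\partial}{\partial x}\!\left(\frac{F}{\omega}\right),\qquad
\frac{\partial}{\partial y}\!\left(\frac{F}{\omega}\right)=\frac{\partial}{\partial x}\!\left(\frac{G}{\omega}\right),
\]
to conclude that the two 1-forms $\alpha=\frac{E}{\omega}dx+\frac{F}{\omega}dy$ and $\beta=\frac{F}{\omega}dx+\frac{G}{\omega}dy$ are closed on $\Omega$. Since $\Omega$ is simply connected, Poincar\'e's lemma furnishes primitives $M,N:\Omega\to\mathbb{R}$ with $dM=\alpha$ and $dN=\beta$, which are precisely the four equalities in the statement.

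Next I would verify that the map $\mathbf{\Psi}(x,y)=x{\mathbf{e}}_{1}+y{\mathbf{e}}_{2}+M(x,y){\mathbf{e}}_{3}+N(x,y){\mathbf{e}}_{4}$ is a gradient graph. Since $M_y=F/\omega=N_x$, the 1-form $M\,dx+N\,dy$ is closed, and by Poincar\'e's lemma again there is a potential $\psi:\Omega\to\mathbb{R}$ with $\psi_x=M$ and $\psi_y=N$. Computing the Hessian of $\psi$ yields
\[
\operatorname{Hess}(\psi)=\begin{pmatrix}M_x & M_y\\ N_x & N_y\end{pmatrix}=\begin{pmatrix}E/\omega & F/\omega\\ F/\omega & G/\omega\end{pmatrix},
\]
whose determinant is $(EG-F^{2})/\omega^{2}=1$. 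Thus $\psi$ is a (smooth) solution of the unimodular Hessian equation $\psi_{xx}\psi_{yy}-\psi_{xy}^{\,2}=1$ on $\Omega$.

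Finally, I would invoke the Harvey--Lawson theorem \cite{HL82}, already used in the proof of Corollary \ref{Jo}, which states that the gradient graph in $\mathbb{R}^{2}\times i\mathbb{R}^{2}={\mathbb{C}}^{2}$ of any solution to the unimodular Hessian equation is a special Lagrangian submanifold, hence a minimal surface. Applied to $\psi$, this shows $\mathbf{\Psi}$ is a minimal gradient graph with the prescribed derivatives, finishing the proof. The only conceptual point is the identification $\det\operatorname{Hess}(\psi)=1$, which is immediate from $\omega^{2}=EG-F^{2}$; the rest is repeated application of Poincar\'e's lemma on the simply connected domain $\Omega$ and a citation of Harvey--Lawson. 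I do not expect a serious obstacle, though one could alternatively bypass Harvey--Lawson by verifying the minimal surface system for $(M,N)$ directly, which amounts to checking that $\tilde{G}M_{xx}-2\tilde{F}M_{xy}+\tilde{E}M_{yy}=0$ and the analogous equation for $N$ using $\tilde{E}=1+M_x^{2}+N_x^{2}$ and $\det\operatorname{Hess}(\psi)=1$; this is a routine but slightly tedious computation that the Harvey--Lawson shortcut avoids.
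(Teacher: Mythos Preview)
Your proposal is correct and follows essentially the same route as the paper: both apply Poincar\'e's lemma twice (first to produce $M,N$ from the identities in Proposition~\ref{MSS2}(b), then to produce a potential $\psi$ from $M_y=N_x$), compute $\det\operatorname{Hess}(\psi)=1$, and invoke Harvey--Lawson. The only cosmetic difference is that the paper notes explicitly that the unimodular Hessian equation is the special Lagrangian equation with phase $\theta=\pi/2$ before citing Harvey--Lawson, whereas you cite it directly as in the proof of Corollary~\ref{Jo}.
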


\begin{proof}
As in the proof of Lemma \ref{entire}, we begin with the two identities:
 \[
 \frac{\partial }{\partial x} \left(  \frac{F}{\omega}   \right) =  \frac{\partial }{\partial y} \left( \frac{E}{\omega}   \right) \quad \text{and} \quad
 \frac{\partial }{\partial x} \left( \frac{G}{\omega} \right) = \frac{\partial }{\partial y} \left( \frac{F}{\omega} \right).
 \]
 Since $\Omega$ is simply connected, Poincar\'{e} Lemma guarantees the existence of functions
 $N, M:\Omega \rightarrow \mathbb{R}$
 satisfying the equalities
  \[
 M_{x} = \frac{E}{\omega}, M_{y} = \frac{F}{\omega}, N_{x} = \frac{F}{\omega}, N_{y} = \frac{G}{\omega}.
 \]
 The  map $(x,y) \mapsto \left(M(x,y), N(x,y) \right)$ is area-preserving because of the equality
 \[
  \frac{\partial (M,N)}{\partial (x,y)} = \frac{EG-F^{2}}{{\omega}^{2}}=1.
 \]
 The equality  $M_{y} = \frac{F}{\omega} =  N_{x}$ shows that, by Poincar\'{e} Lemma again,
 $(x,y) \mapsto \left(M(x,y), N(x,y) \right)$ is a gradient map $\left(M, N \right)=\left( h_{x}, h_{y} \right)$
 for some function $h:\Omega \rightarrow \mathbb{R}$. The function $h$ satisfies the unimodular Hessian equation
\[
h_{xx}h_{yy}-{h_{xy}}^{2}=\frac{\partial (M,N)}{\partial (x,y)} =1.
\]
 We then observe that the function $h$ solves the special Lagrangian equation
  \[
      \cos \theta \left( {h}_{xx} +  {h}_{yy} \right)
     +  \sin \theta ( 1 -   {h}_{xx}  {h}_{yy} +  {{h}_{xy}}^{2}  )=0
  \]
 with $\theta=\frac{\pi}{2}$. The Harvey-Lawson Theorem \cite{HL82} implies that the gradient graph
 \[
 x{\mathbf{e}}_{1}+y{\mathbf{e}}_{2}+M(x,y){\mathbf{e}}_{3} +N(x,y){\mathbf{e}}_{4}
 =x{\mathbf{e}}_{1}+y{\mathbf{e}}_{2}+h_{x}{\mathbf{e}}_{3} +h_{y}{\mathbf{e}}_{4}
 \]
 becomes a special Lagrangian surface.
\end{proof}

\begin{remark} In section \ref{SyMA}, we show that this correspondence from  minimal graphs to special Lagrangian graphs
induces the symplectic graph rotation in Lemma \ref{Hone} which says that a solution of the symplectic Monge-Amp\`{e}re equations in two variables induces a solution of the unimodular Hessian equation in two variables.
\end{remark}

\begin{example}[\textbf{From catenoids to Lagrangian catenoids}] \label{GS1}
We consider the catenoid $z=\rho \, arcosh \left(  \frac{  \sqrt{x^{2} +y^{2} } }{ \rho } \right)$ in ${\mathbb{R}}^{3}$, where $\rho>0$ is a constant. Applying Theorem \ref{RnR4} to the catenoid, we find that the gradient map
 \[
   (x,y) \mapsto \left(  \sqrt{  1 - \frac{{\rho}^{2} }{  x^2 +y^2 } } \, x,   \sqrt{  1 - \frac{{\rho}^{2} }{  x^2 +y^2 } } \, y \right)
 \]
 is area-preserving. Its graph known as a Lagrangian catenoid. Castro and Urbano
 \cite{CaUr99} presented several geometric characterizations of the Lagrangian catenoids \cite[Theorem 3.5]{HL82}
 and \cite[Example 2]{Oss78}.
\end{example}

\begin{example}[\textbf{From helicoids to Lagrangian catenoids}] \label{GS2}
Let $\rho>0$ be a constant. The application of Theorem \ref{RnR4} to the helicoid $z=\rho \arctan \left(  \frac{ y }{ x } \right)$ in ${\mathbb{R}}^{3}$  gives the area-preserving  gradient map
 \[
   (x,y) \mapsto \left(  \sqrt{  1 + \frac{{\rho}^{2} }{  x^2 +y^2 } } \, x,   \sqrt{  1 + \frac{{\rho}^{2} }{  x^2 +y^2 } } \, y \right).
 \]
Its graph is also a Lagrangian catenoid. The area-preserving map in Example \ref{GS1} is the inverse of
 the area-preserving map in Example \ref{GS2}.
\end{example}

 \begin{example}[\textbf{From Scherk surfaces to Lagrangian Scherk surfaces}]
Let $\rho>0$ be a constant. Under the correspondence in Theorem \ref{RnR4}, Scherk's graph
\[
z=\frac{1}{\rho} \left[ \ln \left( \cos \left( \rho x \right) \right) -  \ln \left( \cos \left( \rho y \right) \right)     \right]
\]
 yields a special Lagrangian graph
 \[
   x{\mathbf{e}}_{1}+y{\mathbf{e}}_{2}+  \frac{1}{\rho} arcsinh \left[ \tan \left( \rho x \right)   {\cos} \left( \rho y \right) \right]  {\mathbf{e}}_{3} +  \frac{1}{\rho} arcsinh \left[ \tan \left( \rho y \right)   {\cos} \left( \rho x \right) \right]
    {\mathbf{e}}_{4}.
 \]
 It might be interesting to find some geometric characterizations of this surface.
\end{example}

\section{Twin correspondence and its applications} \label{TW}

 \subsection{Existence of twin correspondence}   \label{SecTwin}

Our aim is to generalize Calabi's correspondence \cite{Cal70}  between
the minimal surface equation  in  ${\mathbb{R}}^{3}$ and the maximal surface equation  in  ${\mathbb{L}}^{3}$
to higher codimension $n \geq 2$. We recall that the notion of the positive area-angle map was introduced in
Section \ref{ntas}.  Our ambient spaces  are  the  Euclidean space ${\mathbb{R}}^{n+2}$ endowed with the
metric ${dx_{1}}^{2} +\cdots + {dx_{n+2}}^{2}$ and  the  pseudo-Euclidean space  ${\mathbb{R}}^{n+2}_{n}$
equipped with the metric ${dx_{1}}^{2} + {dx_{2}}^{2} -  {dx_{3}}^{2} -\cdots - {dx_{n+2}}^{2}$.

\begin{theorem}[\textbf{Twin correspondence - version A}] \label{twinBABY}
There exists the twin correspondence (up to vertical translations) between two dimensional minimal graphs in  ${\mathbb{R}}^{n+2}$ carrying a positive area-angle function and two dimensional maximal graphs  in  ${\mathbb{R}}^{n+2}_{n}$ carrying the same positive area-angle function.
 More explicitly, we have the twin correspondence between the minimal graph    in ${\mathbb{R}}^{n+2}$
  \[
 x{\mathbf{e}}_{1}+y{\mathbf{e}}_{2}+f_{1}(x,y){\mathbf{e}}_{3} +\cdots+f_{n}(x,y){\mathbf{e}}_{n+2}
 \]
 over the simply connected domain $\Omega$ satisfying the positive area-angle condition
\[
\sum_{1 \leq i<j \leq n}  {\frac{ \partial \left( {f}_{i}, {f}_{j} \right)}{\partial (x, y)}}^{2}  <1
\]
and the maximal graph in ${\mathbb{R}}^{n+2}$
\[
  x{\mathbf{e}}_{1}+y{\mathbf{e}}_{2}+g_{1}(x,y){\mathbf{e}}_{3} +\cdots+g_{n}(x,y){\mathbf{e}}_{n+2}
\]
 over the same domain $\Omega$ satisfying the positive area-angle condition
\[
\sum_{1 \leq i<j \leq n}  {\frac{ \partial \left( {g}_{i}, {g}_{j} \right)}{\partial (x, y)}}^{2}  <1.
\]
\end{theorem}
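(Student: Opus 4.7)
The plan is to construct $g$ from $f$ explicitly via Poincar\'{e}'s Lemma applied to the divergence-zero form of the minimal surface system, and then verify algebraically that the resulting graph is spacelike, maximal, and carries the same area-angle function as the original.

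First I would set $(\alpha_k,\beta_k) := (\partial_x f_k, \partial_y f_k)$ and read off from Proposition \ref{MSS2} that the $1$-form $\omega^{-1}(F\alpha_k - E\beta_k)\,dx + \omega^{-1}(G\alpha_k - F\beta_k)\,dy$ is closed for each $k$. Since $\Omega$ is simply connected, Poincar\'{e}'s Lemma gives a primitive $g_k:\Omega\to\mathbb{R}$, unique up to an additive constant (this is the ``up to vertical translations'' clause), satisfying
\[
\frac{\partial g_k}{\partial x} = \frac{F\alpha_k - E\beta_k}{\omega}, \qquad \frac{\partial g_k}{\partial y} = \frac{G\alpha_k - F\beta_k}{\omega}.
\]
The candidate twin is then $x\mathbf{e}_1 + y\mathbf{e}_2 + g_1\mathbf{e}_3 + \cdots + g_n\mathbf{e}_{n+2}$.

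Next comes the algebraic heart of the argument. Writing $J := \sum_{1\le i<j\le n} \bigl(\partial(f_i,f_j)/\partial(x,y)\bigr)^2$ and using $\sum_k \alpha_k^2 = E-1$, $\sum_k \beta_k^2 = G-1$, $\sum_k \alpha_k\beta_k = F$, together with Lagrange's identity $(E-1)(G-1) - F^2 = J$, one obtains the useful relation $\omega^2 = E+G-1+J$. A direct expansion of $\widehat{E}, \widehat{F}, \widehat{G}$ then collapses to the striking pointwise proportionality
\[
\widehat{E} = \frac{(1-J)\,E}{\omega^2}, \qquad \widehat{F} = \frac{(1-J)\,F}{\omega^2}, \qquad \widehat{G} = \frac{(1-J)\,G}{\omega^2},
\]
so $\widehat{\omega}^2 = \widehat{E}\widehat{G} - \widehat{F}^2 = (1-J)^2/\omega^2$. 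Thus the spacelike condition $\widehat{\omega}>0$ is \emph{exactly} the positive area-angle condition $J<1$. A short bilinearity computation simultaneously yields $\partial(g_i,g_j)/\partial(x,y) = ((EG-F^2)/\omega^2)\,\partial(f_i,f_j)/\partial(x,y) = \partial(f_i,f_j)/\partial(x,y)$, so $\widehat{J}=J$ and the area-angle function is preserved.

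For the maximal surface system I would invoke its divergence-zero form in Proposition \ref{MSS3}. Substituting the three proportionalities above into the relevant flux combinations produces the remarkable collapse
\[
\frac{\widehat{G}\widehat{\alpha}_k - \widehat{F}\widehat{\beta}_k}{\widehat{\omega}} = -\beta_k, \qquad \frac{\widehat{E}\widehat{\beta}_k - \widehat{F}\widehat{\alpha}_k}{\widehat{\omega}} = \alpha_k,
\]
so the divergence-zero condition reduces to the trivial identity $-\partial_x\beta_k + \partial_y\alpha_k = -\partial_{xy}f_k + \partial_{yx}f_k = 0$. Hence $g$ is a maximal graph in $\mathbb{R}^{n+2}_n$. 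The reverse direction runs symmetrically: starting from a positive-area-angle maximal graph, apply Poincar\'{e} to the divergence-zero form in Proposition \ref{MSS3} to obtain $f_k$, and the parallel computations (with the sign conventions dictated by the Lorentzian signature) furnish a minimal graph in $\mathbb{R}^{n+2}$ with the same area-angle, closing the loop. The main obstacle is purely algebraic, namely forcing $\widehat{E}\widehat{G}-\widehat{F}^2$ down to $(1-J)^2/\omega^2$; but once the pointwise proportionality $\widehat{E}:\widehat{F}:\widehat{G} = E:F:G$ is spotted, the whole argument becomes transparent.
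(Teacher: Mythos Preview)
Your proof is correct and follows essentially the same route as the paper's own argument: Poincar\'{e}'s Lemma applied to the divergence-zero form yields $g_k$, Lagrange's identity gives the algebraic relations, and the proportionality $\widehat E/\widehat\omega=E/\omega$ etc.\ reduces the maximal surface system to the trivial identity $\partial_{xy}f_k=\partial_{yx}f_k$. The only cosmetic difference is ordering: the paper first establishes $\widehat\omega=(1-J)/\omega$ via the sum $\sum_k(\widehat\alpha_k^2+\widehat\beta_k^2)$ and then derives the proportionality, whereas you spot the pointwise proportionality $\widehat E=(1-J)E/\omega^2$ directly and read off $\widehat\omega$ from it, which is marginally more streamlined.
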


For the construction of the twin correspondence in Theorem \ref{twinBABY}, we need to restate it even more explicitly:

 \begin{theorem}[\textbf{Twin correspondence - version B}] \label{twin1}
 (a) If the graph $\mathbf{\Phi}$ of  a positive area-angle map  $f=\left({f}_{1}, \cdots, {f}_{n}\right): \Omega \rightarrow {\mathbb{R}}^{n}$ with the area-angle ${\Theta} \in \left(0, \frac{\pi}{2} \right]$ becomes
a minimal surface in   ${\mathbb{R}}^{n+2}$, then
there exists  a positive area-angle map $g=\left({g}_{1}, \cdots, {g}_{n}\right): \Omega \rightarrow {\mathbb{R}}^{n}$ with
the same area-angle ${\Theta} \in \left(0, \frac{\pi}{2} \right]$ such that its graph $\mathbf{\widehat{\Phi}}$ becomes
a maximal surface in  ${\mathbb{R}}^{n+2}_{n}$.  \\
 (b) Conversely, if the graph $\mathbf{\widehat{\Phi}}$ of  a positive area-angle map $g=\left({g}_{1}, \cdots, {g}_{n}\right): \Omega \rightarrow {\mathbb{R}}^{n}$ with the area-angle ${\Theta} \in \left(0, \frac{\pi}{2} \right]$ is a maximal surface in  ${\mathbb{R}}^{n+2}_{n}$, then we can associate  a positive area-angle map  $f=\left({f}_{1}, \cdots, {f}_{n}\right): \Omega \rightarrow {\mathbb{R}}^{n}$ with the same area-angle ${\Theta} \in \left(0, \frac{\pi}{2} \right]$ satisfying that
 its graph $\mathbf{\Phi}$ becomes a minimal surface in ${\mathbb{R}}^{n+2}$.\\
 (c) The twin correspondence $\mathbf{\Phi} \Leftrightarrow \mathbf{\widehat{\Phi}}$ in (a) and (b) fulfils the following relations:\\
 (c1) The twin relations hold:
\[
    \left( \frac{\partial g_{k}}{\partial x}, \frac{\partial g_{k}}{\partial y} \right)
    = \left(  -  \frac{E}{\omega} \frac{\partial f_{k}}{\partial y} + \frac{F}{\omega} \frac{\partial f_{k}}{\partial x},
     \frac{G}{\omega}\frac{\partial f_{k}}{\partial x} - \frac{F}{\omega} \frac{\partial g_{k}}{\partial y} \right),  \quad k \in \{1, \cdots, n\},
\]
or equivalently,
\[
 \left( \frac{\partial f_{k}}{\partial x}, \frac{\partial f_{k}}{\partial y} \right)
    = \left(  \frac{\widehat{E}}{\widehat{\omega}} \frac{\partial g_{k}}{\partial y} - \frac{\widehat{F}}{\widehat{\omega}} \frac{\partial g_{k}}{\partial x},
   -  \frac{\widehat{G}}{\widehat{\omega}}\frac{\partial g_{k}}{\partial x} + \frac{\widehat{F}}{\widehat{\omega}} \frac{\partial g_{k}}{\partial y} \right), \quad k \in \{1, \cdots, n\}.
\]
Here, $ds_{\mathbf{\Phi}}^{2}=E dx^{2}+2F dx dy + Gdy^{2}$ denotes the induced metric and $\omega=\sqrt{EG-F^{2}}$. Likewise, we write $ds_{\mathbf{\widehat{\Phi}}}^{2}= \widehat{E}dx^{2}+2 \widehat{F}dx dy + \widehat{G}dy^{2}$ and $\widehat{\omega}=\sqrt{\widehat{E}\widehat{G}-{\widehat{F}}^{2}}$. \\
(c2) They share the area-angle. In fact each Jacobian determinant is preserved:
\[
       \frac{ \partial \left( f_{i}, f_{j} \right)}{\partial  (x, y)} =  \frac{ \partial \left( g_{i}, g_{j} \right)}{\partial  (x, y)}, \quad i, j   \in \{1, \cdots, n\}.
\]
(c3) The angle duality holds
\[
 \widehat{\omega}   \omega  =   {\sin}^{2} \Theta >0.
\]
(c4) Two induced metrics are conformally equivalent. In fact,
$ds_{\mathbf{\Phi}}^{2}= \frac{\omega}{\widehat{\omega}}
ds_{\mathbf{\widehat{\Phi}}}^{2}$. \\
(d)  Since two integrability conditions in (c1) are equivalent, we see that
the twin correspondence is involutive. The twin minimal surface
 $\widehat{\widehat{\Sigma}}$ of the twin surface $\widehat{\Sigma}$ of a minimal surface ${\Sigma}$
 is congruent to  ${\Sigma}$ up to vertical translations.
\end{theorem}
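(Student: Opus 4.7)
My plan is to construct the twin graph explicitly via Poincar\'{e}'s lemma applied to the divergence form of the minimal surface system, and then derive every subsequent claim from a single algebraic identity linking the two sets of metric coefficients.

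\textbf{Construction.} By Proposition \ref{MSS2}(a), for each $k \in \{1,\ldots,n\}$ the $1$-form
\[
\eta_k := -\left(\tfrac{E}{\omega}\beta_k - \tfrac{F}{\omega}\alpha_k\right)dx + \left(\tfrac{G}{\omega}\alpha_k - \tfrac{F}{\omega}\beta_k\right)dy
\]
is closed on the simply connected domain $\Omega$. Poincar\'{e}'s lemma produces primitives $g_k$, unique up to additive constants (these are the ``vertical translations'' in the statement), and the identity $dg_k = \eta_k$ is precisely the twin relation in (c1).

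\textbf{Key algebraic identity and its consequences.} Substituting $\widehat{\alpha}_k, \widehat{\beta}_k$ from (c1) into $\widehat{E}, \widehat{F}, \widehat{G}$ and invoking the Cauchy--Binet identity
\[
\omega^2 = EG - F^2 = 1 + \sum_k(\alpha_k^2 + \beta_k^2) + \|\mathcal{J}\|^2,
\]
a direct expansion collapses everything into the single structural relation
\[
(\widehat{E}, \widehat{F}, \widehat{G}) = \frac{1 - \|\mathcal{J}\|^2}{\omega^2}\,(E, F, G).
\]
The positive area-angle hypothesis $\|\mathcal{J}\| < 1$ makes this common factor strictly positive, so the spacelike condition $\widehat{E}\widehat{G} - \widehat{F}^2 > 0$ is automatic, and one reads off
\[
\widehat{\omega}\,\omega = 1 - \|\mathcal{J}\|^2 = \sin^2\Theta, \qquad ds_{\widehat{\Phi}}^{\,2} = \frac{\widehat{\omega}}{\omega}\,ds_{\Phi}^{\,2}, \qquad \tfrac{\widehat{E}}{\widehat{\omega}} = \tfrac{E}{\omega}, \quad \tfrac{\widehat{F}}{\widehat{\omega}} = \tfrac{F}{\omega}, \quad \tfrac{\widehat{G}}{\widehat{\omega}} = \tfrac{G}{\omega},
\]
yielding (c3) and (c4). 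An independent bilinear expansion of $\partial(g_i, g_j)/\partial(x,y)$, in which the factor $(EG-F^2)/\omega^2 = 1$ eliminates the cross terms, gives (c2).

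\textbf{Maximality, reverse direction, and involution.} Combining the simplified ratios with (c1) produces
\[
\tfrac{\widehat{G}}{\widehat{\omega}}\widehat{\alpha}_k - \tfrac{\widehat{F}}{\widehat{\omega}}\widehat{\beta}_k = -\beta_k, \qquad \tfrac{\widehat{E}}{\widehat{\omega}}\widehat{\beta}_k - \tfrac{\widehat{F}}{\widehat{\omega}}\widehat{\alpha}_k = \alpha_k,
\]
so the divergence form of the maximal surface system in Proposition \ref{MSS3} reduces to $-\partial_x\beta_k + \partial_y\alpha_k = 0$, which is equality of mixed partials of $f_k$. Hence $\widehat{\Phi}$ is maximal, proving (a); part (b) is symmetric with the roles of Propositions \ref{MSS2} and \ref{MSS3} exchanged. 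For (d), Cramer's rule applied to the twin relations (using $EG-F^2=\omega^2$) shows that the two formulations in (c1) are equivalent, so iterating the construction on $\widehat{\Phi}$ produces the closed $1$-form $\alpha_k\,dx + \beta_k\,dy = df_k$, whose primitive is $f_k$ up to an additive constant.

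The main obstacle is the structural identity $(\widehat{E}, \widehat{F}, \widehat{G}) = \tfrac{1-\|\mathcal{J}\|^2}{\omega^2}(E, F, G)$. Once this common scaling is in hand, every remaining claim (spacelike property, angle duality, conformality, maximal surface equation, involution) drops out essentially for free, and the positive area-angle hypothesis is used at exactly the one point where we need the common factor to be strictly positive.
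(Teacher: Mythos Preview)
Your proof is correct and follows the same overall strategy as the paper: construct $g_k$ via Poincar\'e's lemma from the divergence form of the minimal surface system, then verify the metric relations and the maximal surface system by direct algebraic computation. The one organizational difference is that you isolate the single identity $(\widehat{E},\widehat{F},\widehat{G}) = \tfrac{1-\|\mathcal{J}\|^2}{\omega^2}(E,F,G)$ and read everything off it, whereas the paper first computes $\sum_k(\widehat{\alpha}_k^2+\widehat{\beta}_k^2)$ to obtain $\widehat{E}\widehat{G}-\widehat{F}^2$, and only afterwards proves the equalities $\widehat{E}/\widehat{\omega}=E/\omega$, $\widehat{F}/\widehat{\omega}=F/\omega$, $\widehat{G}/\widehat{\omega}=G/\omega$ as a separate step; your packaging is a bit cleaner but the underlying calculations are identical.
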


  \begin{proof} \label{twinproof}
  We show (a) and (c) simultaneously.  Working backwards gives (b). \\
  Let   $(x,y) \in \Omega \mapsto \mathbf{\Phi}(x,y)=x {\mathbf{e}}_{1}+y{\mathbf{e}}_{2}+ f_{1}(x,y) {\mathbf{e}}_{3} +  \cdots + f_{n}(x,y)  {\mathbf{e}}_{n+2}$ be a minimal surface  in   ${\mathbb{R}}^{n+2}$.  Write $ds_{\mathbf{\Phi}}^{2}=E dx^{2}+2F dx dy + Gdy^{2}$ and $\omega=\sqrt{EG-F^{2}}>0$. We further assumed the positive area-angle condition
\[
  1 > \cos \Theta = \Vert \mathcal{J} \Vert :=  \sqrt{ \sum_{1 \leq i<j \leq n}  {{\mathcal{J}}_{i,j}}^{2}}, \quad   {\mathcal{J}}_{i,j}:= \frac{ \partial \left( f_{i}, f_{j} \right)}{\partial (x, y)}.
\]
After setting $\left({\alpha}_{k}, {\beta}_{k} \right)=\left(\frac{\partial f_{k}}{\partial x}, \frac{\partial f_{k}}{\partial y} \right)$, we obtain
\[
 E= 1 + \sum_{k=1}^{n} {{\alpha}_{k}}^{2}, \; F=  \sum_{k=1}^{n}  {\alpha}_{k} {\beta}_{k}, \;  G= 1+  \sum_{k=1}^{n} {{\beta}_{k}}^{2}, \; {\mathcal{J}}_{i,j}=  {\alpha}_{i} {\beta}_{j} -  {\alpha}_{j} {\beta}_{i}.
\]
Then, Lagrange's Identity gives
\[
 {\omega}^{2}= EG-F^{2} = 1 + \sum_{k=1}^{n} {{\alpha}_{k}}^{2} + \sum_{k=1}^{n} {{\beta}_{k}}^{2} + \sum_{1 \leq i < j \leq n}  {{\mathcal{J}}_{i,j}}^{2}
\]
and thus
\[
  {\Vert \mathcal{J} \Vert}^{2} =\sum_{1 \leq i < j \leq n}  {{\mathcal{J}}_{i,j}}^{2} =  {\omega}^{2} - 1 - (E-1) -(G-1)=  {\omega}^{2} + 1 - E -G.
\]
Since the minimal surface system reads
\[
  \frac{\partial }{\partial x} \left( \frac{G}{\omega} {\alpha}_{k} - \frac{F}{\omega}  {\beta}_{k}  \right)+
   \frac{\partial }{\partial y} \left( \frac{E}{\omega} {\beta}_{k} - \frac{F}{\omega}  {\alpha}_{k}   \right)=0, \quad k \in \{1, \cdots, n\},
\]
and since $\Omega$ is simply connected, Poincar\'{e} Lemma
guarantees the existence of functions $g_{1}$, $\cdots$, $g_{n}:
\Omega \rightarrow {\mathbb{R}}$ satisfying the following integrability condition
\[
    \left( \frac{\partial g_{k}}{\partial x}, \frac{\partial g_{k}}{\partial y} \right)
    = \left(  -  \frac{E}{\omega} {\beta}_{k}  + \frac{F}{\omega}  {\alpha}_{k},
     \frac{G}{\omega}  {\alpha}_{k} - \frac{F}{\omega} {\beta}_{k}  \right), \quad k \in \{1, \cdots, n\}.
\]
Our goal is to show that
\[
\mathbf{\widehat{\Phi}}(x,y)=x {\mathbf{e}}_{1}+y{\mathbf{e}}_{2}+ g_{1}(x,y) {\mathbf{e}}_{3} +  \cdots + g_{n}(x,y)  {\mathbf{e}}_{n+2}
\]
becomes a maximal surface in ${\mathbb{R}}^{n+2}_{n}$. Write $ds_{\mathbf{\widehat{\Phi}}}^{2}= \widehat{E}dx^{2}+2 \widehat{F}dx dy + \widehat{G}dy^{2}$. Then,
\[
\widehat{E}= 1 - \sum_{k=1}^{n} {{\widehat{\alpha}}_{k}}^{2}, \; \widehat{F}= - \sum_{k=1}^{n}  {\widehat{\alpha}}_{k} {\widehat{\beta}}_{k}, \; \widehat{G}= 1-\sum_{k=1}^{n} {{\widehat{\beta}}_{k}}^{2}, \quad \left({\widehat{\alpha}}_{k}, {\widehat{\beta}}_{k} \right)=\left(\frac{\partial g_{k}}{\partial x}, \frac{\partial g_{k}}{\partial y} \right).
\]
 The first part of (c1) is now finished because the above condition reads
 \[
     \left( {\widehat{\alpha}}_{k} , {\widehat{\beta}}_{k}  \right)
     = \left(  -  \frac{E}{\omega} {\beta}_{k}  + \frac{F}{\omega}  {\alpha}_{k},
      \frac{G}{\omega}  {\alpha}_{k} - \frac{F}{\omega} {\beta}_{k}  \right).
 \]
 \textbf{Claim A.} We show the spacelike condition $\widehat{E}\widehat{G}-{\widehat{F}}^{2}>0$. It requires several steps. \\
 \textbf{Step A1.} We compute Jacobian determinants. The above twin relation reads
\[
  \begin{pmatrix} {\widehat{\beta}}_{k}  \\ -{\widehat{\alpha}}_{k}   \end{pmatrix}
 = \begin{pmatrix}
    \frac{G}{\omega}   &   -\frac{F}{\omega}  \\
    -\frac{F}{\omega}  &    \frac{E}{\omega}
  \end{pmatrix}
 \begin{pmatrix}  {{\alpha}}_{k}  \\ {{\beta}}_{k}      \end{pmatrix}
 \quad \text{or} \quad
 \begin{pmatrix}  {{\alpha}}_{k}  \\ {{\beta}}_{k}   \end{pmatrix}     =    \begin{pmatrix}
    \frac{E}{\omega}   &   \frac{F}{\omega}  \\
    \frac{F}{\omega}  &    \frac{G}{\omega}
     \end{pmatrix}
 \begin{pmatrix} {\widehat{\beta}}_{k}  \\ -{\widehat{\alpha}}_{k}  \end{pmatrix}.
\]
Equating the determinant of the both sides in
\[
\begin{pmatrix}   {{\alpha}}_{i} & {{\alpha}}_{j}  \\  {{\beta}}_{i} & {{\beta}}_{j}    \end{pmatrix}  =   \begin{pmatrix}
    \frac{E}{\omega}   &   \frac{F}{\omega}  \\
    \frac{F}{\omega}  &    \frac{G}{\omega}
 \end{pmatrix}
  \begin{pmatrix}  {\widehat{\beta}}_{i} &  {\widehat{\beta}}_{j}  \\ -{\widehat{\alpha}}_{i} & -{\widehat{\alpha}}_{j}   \end{pmatrix}
\]
 yields the equality ${\alpha}_{i} {\beta}_{j} - {\alpha}_{j} {\beta}_{i}= {\widehat{\alpha}}_{i} {\widehat{\beta}}_{j} - {\widehat{\alpha}}_{j} {\widehat{\beta}}_{i}$, which gives the assertion (c2):
\[
     {\mathcal{J}}_{i,j} :=  \frac{ \partial \left( f_{i}, f_{j} \right)}{\partial  (x, y)}={\alpha}_{i} {\beta}_{j} - {\alpha}_{j} {\beta}_{i}= {\widehat{\alpha}}_{i} {\widehat{\beta}}_{j} - {\widehat{\alpha}}_{j} {\widehat{\beta}}_{i} =  \frac{ \partial \left( g_{i}, g_{j} \right)}{\partial  (x, y)}, \quad i, j   \in \{1, \cdots, n\}.
\]
 \textbf{Step A2.} Here, our aim is to prove the equality
\[
\left( {{\widehat{\alpha}}_{1}}^{2} + \cdots + {{\widehat{\alpha}}_{n}}^{2} \right)
+\left( {{\widehat{\beta}}_{1}}^{2} + \cdots + {{\widehat{\beta}}_{n}}^{2} \right)=
\frac{ (E+G+2) {\omega}^{2}  - (E+G)^{2}  }{{\omega}^{2}}.
\]
Using the twin relation and the definition ${\omega}^{2}= EG-F^{2}$, we obtain
\begin{eqnarray*}
  &&  \sum_{k=1}^{n} \left[ {{\widehat{\alpha}}_{k}}^{2} + {{\widehat{\beta}}_{k}}^{2} \right]  \\
   & = & \sum_{k=1}^{n} \left[ { \left( -  \frac{E}{\omega} {\beta}_{k}  + \frac{F}{\omega}  {\alpha}_{k} \right) }^{2} + { \left(  \frac{G}{\omega}  {\alpha}_{k} - \frac{F}{\omega} {\beta}_{k}  \right)    }^{2} \right]   \\
   & = &  \frac{1}{{\omega}^{2}} \left[
 \left( F^2 +G^2 \right)  \sum_{k=1}^{n} {{\alpha}_{k}}^{2}  + \left( F^2 +E^2 \right)  \sum_{k=1}^{n} { {\beta}_{k}}^{2}
   - 2 (E+G)F \sum_{k=1}^{n}  {\alpha}_{k}  {\beta}_{k} \right] \\
 & = &  \frac{1}{{\omega}^{2}} \left[ \left( F^2 +G^2 \right) (E-1)  + \left( F^2 +E^2 \right) (G-1)
   - 2 (E+G) F^{2} \right]  \\
 &=& \frac{ (E+G+2) \left(EG-F^2\right)  - (E+G)^{2}  }{{\omega}^{2}}.
 \end{eqnarray*}
  \textbf{Step A3.} We here deduce the identity
  \[
  \widehat{E}\widehat{G}-{\widehat{F}}^{2} = \frac{\left( 1 - {\Vert \mathcal{J}  \Vert}^{2} \right)^{2} }{ {\omega}^{2}}.
  \]
 Then, the desired estimation $\widehat{E}\widehat{G}-{\widehat{F}}^{2}>0$ immediately follows from  our assumption $1 > \Vert \mathcal{J} \Vert$. We recall that  the Jacobian determinant equality ${\mathcal{J}}_{i,j} =  {\widehat{\alpha}}_{i} {\widehat{\beta}}_{j} - {\widehat{\alpha}}_{j} {\widehat{\beta}}_{i}$ holds. Using Lagrange's Identity again, we deduce
 \begin{eqnarray*}
 \widehat{E}\widehat{G}-{\widehat{F}}^{2} &=& \left(  1 - \sum_{k=1}^{n} {{\widehat{\alpha}}_{k}}^{2}
 \right)\left(1-\sum_{k=1}^{n} {{\widehat{\beta}}_{k}}^{2} \right) - \left(- \sum_{k=1}^{n}  {\widehat{\alpha}}_{k} {\widehat{\beta}}_{k} \right)^{2} \\
  &=& 1- \sum_{k=1}^{n} \left[ {{\widehat{\alpha}}_{k}}^{2} + {{\widehat{\beta}}_{k}}^{2} \right] +
  \left(   \sum_{k=1}^{n} {{\widehat{\alpha}}_{k}}^{2}
 \right)\left( \sum_{k=1}^{n} {{\widehat{\beta}}_{k}}^{2} \right) - \left(  \sum_{k=1}^{n}  {\widehat{\alpha}}_{k} {\widehat{\beta}}_{k} \right)^{2} \\
   &=& 1- \sum_{k=1}^{n} \left[ {{\widehat{\alpha}}_{k}}^{2} + {{\widehat{\beta}}_{k}}^{2} \right] +
  \sum_{1 \leq i < j \leq n}  {{\mathcal{J}}_{i,j}}^{2}  \\
     &=& 1- \frac{ (E+G+2) {\omega}^{2}  - (E+G)^{2}  }{{\omega}^{2}} +  \left(  {\omega}^{2} + 1 - E -G \right)  \\
  &=&  \frac{ \left( E+G - {\omega}^{2} \right)^{2} }{ {\omega}^{2}} \\
  &=& \frac{ \left( 1 - {\Vert \mathcal{J}  \Vert}^{2} \right)^{2} }{ {\omega}^{2}}.
 \end{eqnarray*}
 Since $1 > \cos \Theta = \Vert \mathcal{J} \Vert$, we deduce $\widehat{E}\widehat{G}-{\widehat{F}}^{2}>0$.
 Now, set $\widehat{\omega}:=\sqrt{\widehat{E}\widehat{G}-{\widehat{F}}^{2}}>0$. Then, the above equalities give
 the angle duality in (c3):
 \[
  \widehat{\omega}=\sqrt{\widehat{E}\widehat{G}-{\widehat{F}}^{2}}=\frac{E+G-{\omega}^{2}}{\omega}=\frac{ 1 - {\Vert \mathcal{J}  \Vert}^{2}  }{\omega}=\frac{  {\sin}^{2} \Theta  }{\omega}>0.
 \]
 \textbf{Claim B.} We verify that the graph $(x,y) \in \Omega \mapsto \mathbf{\widehat{\Phi}}(x,y)=x {\mathbf{e}}_{1}+y{\mathbf{e}}_{2}+ g_{1}(x,y) {\mathbf{e}}_{3} +  \cdots + g_{n}(x,y)  {\mathbf{e}}_{n+2}$ becomes a maximal surface in  pseudo-Euclidean space   ${\mathbb{R}}^{n+2}_{n}$. \\
  \textbf{Step B1.} We check (c4), which implies that the metric $ds_{\mathbf{\widehat{\Phi}}}^{2}= \frac{\widehat{\omega}}{\omega} ds_{\mathbf{\Phi}}^{2}$ is positive definite.  We need to  prove the three identities $\frac{E}{\omega}=\frac{\widehat{E}}{\widehat{\omega}}$,
$\frac{F}{\omega}=\frac{\widehat{F}}{\widehat{\omega}}$ and $\frac{G}{\omega}=\frac{\widehat{G}}{\widehat{\omega}}$.
The twin relation and the definition ${\omega}^{2}= EG-F^{2}$ yield
\begin{eqnarray*}
   \sum_{k=1}^{n}   {{\widehat{\alpha}}_{k}}^{2}
   & = & \sum_{k=1}^{n} { \left( -  \frac{E}{\omega} {\beta}_{k}  + \frac{F}{\omega}  {\alpha}_{k} \right) }^{2} \\
   & = & \frac{E^{2}}{{\omega}^{2}} \sum_{k=1}^{n} {{\beta}_{k}}^{2}
   +  \frac{F^{2}}{{\omega}^{2}} \sum_{k=1}^{n} {{\alpha}_{k}}^{2}
   -2 \frac{EF}{{\omega}^{2}} \sum_{k=1}^{n}  {\alpha}_{k} {\beta}_{k} \\
   &=&  \frac{E^{2}  (G-1) +  F^{2} (E-1)  -2EF^{2}} { {\omega}^{2}} \\
     &=&  \frac{E  {\omega}^{2} - E^2 -F^2 } { {\omega}^{2}} \\
 \end{eqnarray*}
and so, from the angle duality $ \widehat{\omega}=\frac{ 1 - {\Vert \mathcal{J}  \Vert}^{2}  }{\omega}=\frac{E+G-{\omega}^{2}}{\omega}$,
\[
\widehat{E} - \frac{\widehat{\omega}}{\omega} {E} =  1- \sum_{k=1}^{n}   {{\widehat{\alpha}}_{k}}^{2}  -
 \frac{\widehat{\omega}}{\omega}  E =  1- \frac{E  {\omega}^{2} - E^2 -F^2 } { {\omega}^{2}}   -
 \frac{E+G-{\omega}^{2}}{{\omega}^{2}}  E=0.
\]
The remaining two identities in (c4) can be proved similarly.  Thus, (c4) is proved. \\
 \textbf{Step B2.} We show that  the height function $g$ satisfies the maximal surface system. We
use the three identities in Step B1 to rewrite the twin relation
as
\[
\begin{pmatrix}  \frac{\partial f_{k}}{\partial x}   \\   \frac{\partial f_{k}}{\partial y} \end{pmatrix}
=\begin{pmatrix}  {{\alpha}}_{k}  \\ {{\beta}}_{k}   \end{pmatrix}     =    \begin{pmatrix}
    \frac{E}{\omega}   &   \frac{F}{\omega}  \\
    \frac{F}{\omega}  &    \frac{G}{\omega}
     \end{pmatrix}
 \begin{pmatrix} {\widehat{\beta}}_{k}  \\ -{\widehat{\alpha}}_{k}  \end{pmatrix}
 =   \begin{pmatrix}
\frac{\widehat{E}}{\widehat{\omega}}  &   \frac{\widehat{F}}{\widehat{\omega}}  \\
\frac{\widehat{F}}{\widehat{\omega}}  &   \frac{\widehat{G}}{\widehat{\omega}}
     \end{pmatrix}
 \begin{pmatrix} {\widehat{\beta}}_{k}  \\ -{\widehat{\alpha}}_{k}  \end{pmatrix}
 =   \begin{pmatrix}
\frac{\widehat{E}}{\widehat{\omega}}  {\widehat{\beta}}_{k}  - \frac{\widehat{F}}{\widehat{\omega}}  {\widehat{\alpha}}_{k}   \\
-\frac{\widehat{G}}{\widehat{\omega}}  {\widehat{\alpha}}_{k}  +  \frac{\widehat{F}}{\widehat{\omega}}  {\widehat{\beta}}_{k}
     \end{pmatrix},
\]
which is the second part of (c1). It therefore follows that, for all $k \in \{1, \cdots, n\}$,
\[
  \frac{\partial }{\partial x} \left( \frac{\widehat{G}}{\widehat{\omega}} {\widehat{\alpha}}_{k} - \frac{\widehat{F}}{\widehat{\omega}}  {\widehat{\beta}}_{k}  \right)+
   \frac{\partial }{\partial y} \left( \frac{\widehat{E}}{\widehat{\omega}} {\widehat{\beta}}_{k} - \frac{\widehat{F}}{\widehat{\omega}}  {\widehat{\alpha}}_{k}   \right)=   \frac{\partial }{\partial x} \left(-\frac{\partial f_{k}}{\partial y} \right) +  \frac{\partial }{\partial y} \left( \frac{\partial f_{k}}{\partial x} \right)= 0,
\]
which is the maximal surface system. This completes the proof of
(a).
\end{proof}

Bernstein's Theorem for two dimensional entire minimal graphs in Euclidean space with a positive area-angle holds \cite{HSHV09}.
As a consequence of this and the twin correspondence, we obtain

 \begin{corollary}  \label{LB}
  The only $2$-dimensional entire maximal graphs in ${\mathbb{R}}^{n+2}_{n}$, $n \geq 2$ with a positive area-angle
  function are spacelike planes.
 \end{corollary}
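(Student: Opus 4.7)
The plan is to reduce the statement to the known Euclidean result of Hasanis--Savas-Halilaj--Vlachos \cite{HSHV09} by invoking the twin correspondence in both directions. Concretely, start with an entire maximal graph
\[
\mathbf{\widehat{\Phi}}(x,y)=x\mathbf{e}_{1}+y\mathbf{e}_{2}+g_{1}(x,y)\mathbf{e}_{3}+\cdots+g_{n}(x,y)\mathbf{e}_{n+2}
\]
in $\mathbb{R}^{n+2}_{n}$ satisfying $\sum_{1\le i<j\le n}\left(\tfrac{\partial(g_{i},g_{j})}{\partial(x,y)}\right)^{2}<1$ on the whole plane $\mathbb{R}^{2}$. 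Since $\mathbb{R}^{2}$ is simply connected, Theorem \ref{twin1}(b) applies with $\Omega=\mathbb{R}^{2}$ and produces a height function $f=(f_{1},\dots,f_{n}):\mathbb{R}^{2}\rightarrow\mathbb{R}^{n}$ whose graph $\mathbf{\Phi}$ is an entire minimal graph in $\mathbb{R}^{n+2}$. By part (c2) of that theorem, each Jacobian determinant is preserved, so $\mathbf{\Phi}$ automatically carries the same positive area-angle function as $\mathbf{\widehat{\Phi}}$.

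First, I would apply the Bernstein-type theorem of Hasanis--Savas-Halilaj--Vlachos \cite{HSHV09} to the entire minimal graph $\mathbf{\Phi}$: since its area-angle is positive on all of $\mathbb{R}^{2}$, the result forces $\mathbf{\Phi}$ to be an affine plane, i.e., each $f_{k}$ is an affine function of $(x,y)$. Consequently the partial derivatives $\alpha_{k}=\tfrac{\partial f_{k}}{\partial x}$ and $\beta_{k}=\tfrac{\partial f_{k}}{\partial y}$ are real constants, which in turn makes $E,F,G$ and $\omega=\sqrt{EG-F^{2}}$ constant functions on $\mathbb{R}^{2}$.

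Next, I would push this rigidity back through the twin relation in part (c1),
\[
\left(\frac{\partial g_{k}}{\partial x},\frac{\partial g_{k}}{\partial y}\right)=\left(-\tfrac{E}{\omega}\beta_{k}+\tfrac{F}{\omega}\alpha_{k},\ \tfrac{G}{\omega}\alpha_{k}-\tfrac{F}{\omega}\beta_{k}\right),
\]
which now expresses $\tfrac{\partial g_{k}}{\partial x}$ and $\tfrac{\partial g_{k}}{\partial y}$ as constants. Therefore each $g_{k}$ is affine and $\mathbf{\widehat{\Phi}}$ is a plane in $\mathbb{R}^{n+2}_{n}$. Spacelikeness was built into the hypothesis (the spacelike condition $\widehat{E}\widehat{G}-\widehat{F}^{2}>0$ is part of the definition of the maximal graph, and is also reconfirmed by the angle duality (c3) which gives $\widehat{\omega}=\sin^{2}\Theta/\omega>0$), so $\mathbf{\widehat{\Phi}}$ is a spacelike plane.

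There is essentially no hard step: the heavy lifting is done by \cite{HSHV09} and by the involutive twin correspondence of Theorem \ref{twin1}. The only point that requires care is verifying that the hypothesis transfers cleanly across the correspondence. This is precisely guaranteed by (c2) (Jacobian preservation, hence area-angle preservation) together with the fact that the twin construction in Theorem \ref{twin1} works over any simply connected domain, in particular over $\Omega=\mathbb{R}^{2}$, so entire maximal graphs correspond to entire minimal graphs.
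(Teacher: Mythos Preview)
Your argument is correct and follows essentially the same route as the paper: pass through the twin correspondence (Theorem~\ref{twin1}) to an entire minimal graph in $\mathbb{R}^{n+2}$ with the same positive area-angle, invoke \cite{HSHV09} to force that minimal graph to be a plane, and then transport flatness back to the maximal side. The only difference is cosmetic: where the paper simply asserts that the twin correspondence carries planes to spacelike planes, you spell this out explicitly via the twin relation (c1), which is a perfectly acceptable (and arguably clearer) way to finish.
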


\begin{proof}
  Let $\widehat{\Sigma}$  be a $2$-dimensional entire maximal graph in ${\mathbb{R}}^{n+2}_{n}$ with a positive area-angle.
  Hence its Jacobian is bounded. Taking $\Omega={\mathbb{R}}^{2}$ in Theorem \ref{twin1}, we find that
 its twin minimal graph  $\Sigma$ in  ${\mathbb{R}}^{n+2}$ is also entire. By Theorem of Hasanis, Savas-Halilaj and Vlachos \cite{HSHV09}, we find that the entire minimal graph $\Sigma$ with bounded Jacobian is a plane. Since the twin correspondence transforms the planes in  ${\mathbb{R}}^{n+2}$  to the spacelike planes in ${\mathbb{R}}^{n+2}_{n}$, we conclude that $\widehat{\Sigma}$ becomes a spacelike plane.
\end{proof}

 \begin{corollary}[\textbf{Calabi's Theorem}, \cite{Cal70}]
  The only   entire maximal graphs in Lorentz space ${\mathbb{R}}^{3}_{1}$ are spacelike planes.
 \end{corollary}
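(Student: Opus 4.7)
The plan is to observe that Calabi's Theorem is the $n=1$ specialization of Corollary \ref{LB}, and to verify that the twin correspondence of Theorem \ref{twin1} continues to function in this codimension-one borderline case. When $n=1$, the sum $\sum_{1 \leq i<j \leq n} {\mathcal{J}_{i,j}}^{2}$ defining $\Vert \mathcal{J} \Vert^{2}$ is an empty sum, so $\Vert \mathcal{J} \Vert \equiv 0 < 1$ and the positive area-angle hypothesis is automatically satisfied by every maximal graph in $\mathbb{R}^{3}_{1} = \mathbb{L}^{3}$. In particular the area-angle is $\Theta = \pi/2$ identically, and the angle duality $\widehat{\omega}\,\omega = \sin^{2}\Theta = 1$ recovers the classical Calabi reciprocity between the angle functions of the twin graphs.

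Next, I would inspect the proof of Theorem \ref{twin1} to confirm that every step survives at $n=1$: the divergence form of the surface systems in Proposition \ref{MSS2} and Proposition \ref{MSS3}, the application of the Poincar\'{e} lemma on the simply connected domain $\mathbb{R}^{2}$, Lagrange's identity, and the algebraic identity $\widehat{E}\widehat{G}-\widehat{F}^{2} = (1 - \Vert \mathcal{J} \Vert^{2})^{2}/{\omega}^{2}$ are all codimension-free. Thus part (b) of Theorem \ref{twin1}, applied with $\Omega = \mathbb{R}^{2}$, associates to an arbitrary entire maximal graph $\widehat{\Sigma}$ in $\mathbb{L}^{3}$ with height function $g : \mathbb{R}^{2} \to \mathbb{R}$ an entire minimal graph $\Sigma$ in $\mathbb{R}^{3}$ with height function $f : \mathbb{R}^{2} \to \mathbb{R}$.

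Classical Bernstein's Theorem in $\mathbb{R}^{3}$ then forces $f$ to be affine, say $f(x,y) = ax + by + c$. Consequently $E$, $F$, $G$, and $\omega$ are all constants, so the twin relation (c1) expresses $g_{x}$ and $g_{y}$ as constant linear combinations of $f_{x} = a$ and $f_{y} = b$; hence $g_{x}$ and $g_{y}$ are themselves constant, $g$ is affine, and $\widehat{\Sigma}$ is a spacelike plane. The whole argument is structurally identical to that of Corollary \ref{LB}, with the $n \geq 2$ Bernstein-type theorem of Hasanis--Savas-Halilaj--Vlachos replaced by the classical Bernstein Theorem in $\mathbb{R}^{3}$.

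The main conceptual obstacle, rather than a technical one, is simply to recognize that the positive area-angle condition degenerates harmlessly at $n=1$: what looks like a nontrivial hypothesis in higher codimension is automatic in the codimension-one setting, and the twin correspondence consequently reduces exactly to Calabi's original correspondence between minimal graphs in $\mathbb{R}^{3}$ and maximal graphs in $\mathbb{L}^{3}$. With that identification in hand, Calabi's Theorem follows without additional work.
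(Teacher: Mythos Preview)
Your argument is correct, but the paper takes a slightly different route. Rather than specializing the twin correspondence to $n=1$ and checking that the empty-sum convention makes the area-angle condition vacuous, the paper embeds the entire maximal graph $x\,{\mathbf{e}}_{1}+y\,{\mathbf{e}}_{2}+g(x,y)\,{\mathbf{e}}_{3}$ in ${\mathbb{R}}^{3}_{1}$ into ${\mathbb{R}}^{4}_{2}$ by appending a fourth height function identically zero, obtaining $x\,{\mathbf{e}}_{1}+y\,{\mathbf{e}}_{2}+g(x,y)\,{\mathbf{e}}_{3}+0\cdot{\mathbf{e}}_{4}$; the single Jacobian $\frac{\partial(g,0)}{\partial(x,y)}$ vanishes, so $\Theta=\pi/2$ and Corollary~\ref{LB} (the $n\geq 2$ result) applies directly. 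The trade-off is this: the paper's embedding trick is one line, but it leans on the Hasanis--Savas-Halilaj--Vlachos theorem hidden inside Corollary~\ref{LB}; your approach requires the extra verification that Theorem~\ref{twin1} survives at $n=1$ (where the paper's Definition of area-angle is stated only for $n\geq 2$), but it then needs only the classical Bernstein Theorem in ${\mathbb{R}}^{3}$, making the deduction more elementary and closer in spirit to Calabi's original duality.
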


\begin{proof}
   An  entire maximal graph $x {\mathbf{e}}_{1}+y{\mathbf{e}}_{2}+g(x,y) {\mathbf{e}}_{3}$ in  ${\mathbb{R}}^{3}_{1}$
  can be viewed as an  entire maximal graph
 $x {\mathbf{e}}_{1}+y{\mathbf{e}}_{2}+g(x,y) {\mathbf{e}}_{3}+0  {\mathbf{e}}_{4}$ in  ${\mathbb{R}}^{4}_{2}$
 with area-angle $\Theta=\frac{\pi}{2}$.
 \end{proof}

  Let $D^{2}$ denote the Hessian matrix operator. The twin correspondence induces the
following duality:

\begin{corollary}[\textbf{Duality between minimal gradient graphs in ${\mathbb{R}}^{4}$ and maximal gradient graphs
 in ${\mathbb{R}}^{4}_{2}$}] \label{SLE1} Let $\Omega \subset {\mathbb{R}}^{2}$ be a simply connected domain.
 If the gradient graph $x {\mathbf{e}}_{1}+y{\mathbf{e}}_{2}+{\widehat{p}}_{x} {\mathbf{e}}_{3} + {\widehat{p}}_{y} {\mathbf{e}}_{4}$ of the function $\widehat{p}: \Omega \rightarrow {\mathbb{R}}$
such that $\vert \det D^{2}\widehat{p} \,\vert<1$ is a maximal surface in  ${\mathbb{R}}^{4}_{2}$, then
the height function of the twin minimal graph in  ${\mathbb{R}}^{4}$ of the maximal graph $x {\mathbf{e}}_{1}+y{\mathbf{e}}_{2} -{\widehat{p}}_{y} {\mathbf{e}}_{3} + {\widehat{p}}_{x} {\mathbf{e}}_{4}$ becomes also a gradient map of some function  $p: \Omega \rightarrow {\mathbb{R}}$ with $\vert \det D^{2} {p} \, \vert<1$.
 \end{corollary}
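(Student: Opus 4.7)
My plan is to reduce the corollary to Theorem \ref{twin1} by first applying a Lorentz rotation to the given maximal graph. The linear map $L:(x_1,x_2,x_3,x_4)\mapsto(x_1,x_2,-x_4,x_3)$ preserves the pseudo-Euclidean metric ${dx_1}^2+{dx_2}^2-{dx_3}^2-{dx_4}^2$ of ${\mathbb{R}}^{4}_{2}$, so it sends the hypothesized maximal gradient graph to the rotated surface $x{\mathbf{e}}_{1}+y{\mathbf{e}}_{2}-{\widehat{p}}_{y} {\mathbf{e}}_{3} + {\widehat{p}}_{x} {\mathbf{e}}_{4}$, which is therefore another spacelike maximal graph over $\Omega$. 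Since its Jacobian
\[
 \frac{\partial(-\widehat{p}_y,\widehat{p}_x)}{\partial(x,y)}=(-\widehat{p}_{xy})\widehat{p}_{xy}-(-\widehat{p}_{yy})\widehat{p}_{xx}=\widehat{p}_{xx}\widehat{p}_{yy}-\widehat{p}_{xy}^{\;2}=\det D^{2}\widehat{p}
\]
coincides with that of the original graph, the positive area-angle condition $\vert\det D^{2}\widehat{p}\,\vert<1$ carries over.

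Next, I apply Theorem \ref{twin1}(b) to the rotated maximal graph, so its twin minimal graph in ${\mathbb{R}}^{4}$ has the form $x{\mathbf{e}}_{1}+y{\mathbf{e}}_{2}+f_{1}(x,y){\mathbf{e}}_{3}+f_{2}(x,y){\mathbf{e}}_{4}$ over the simply connected domain $\Omega$. Writing $g_{1}=-\widehat{p}_{y}$ and $g_{2}=\widehat{p}_{x}$, I shall exploit the twin relation in (c1) expressing $(\partial f_{k}/\partial x,\partial f_{k}/\partial y)$ in terms of $\widehat{E},\widehat{F},\widehat{G},\widehat{\omega}$ and the first partials of $g_{k}$. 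The core of the argument is to check the closedness $\partial f_{1}/\partial y=\partial f_{2}/\partial x$, for then Poincar\'{e}'s Lemma on the simply connected $\Omega$ yields a potential $p:\Omega\to\mathbb{R}$ with $(f_{1},f_{2})=(p_{x},p_{y})$.

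The crucial algebraic verification, which I view as the main (but elementary) obstacle, goes as follows. Substituting $\widehat{\alpha}_{1}=-\widehat{p}_{xy}$, $\widehat{\beta}_{1}=-\widehat{p}_{yy}$, $\widehat{\alpha}_{2}=\widehat{p}_{xx}$, $\widehat{\beta}_{2}=\widehat{p}_{xy}$ into the twin formula, the difference $\partial f_{1}/\partial y-\partial f_{2}/\partial x$ reduces to $\widehat{\omega}^{-1}\bigl[(\widehat{G}-\widehat{E})\widehat{p}_{xy}-\widehat{F}(\widehat{p}_{yy}-\widehat{p}_{xx})\bigr]$. Using the identities $\widehat{G}-\widehat{E}=\widehat{p}_{xx}^{\;2}-\widehat{p}_{yy}^{\;2}=(\widehat{p}_{xx}-\widehat{p}_{yy})(\widehat{p}_{xx}+\widehat{p}_{yy})$ and $\widehat{F}=-\widehat{p}_{xy}(\widehat{p}_{xx}+\widehat{p}_{yy})$ that fall out of the definitions in Proposition \ref{MSS3}, the bracket factors as $(\widehat{p}_{xx}+\widehat{p}_{yy})\bigl[(\widehat{p}_{xx}-\widehat{p}_{yy})\widehat{p}_{xy}-\widehat{p}_{xy}(\widehat{p}_{xx}-\widehat{p}_{yy})\bigr]=0$, which is exactly the cancellation needed.

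Finally, the Hessian bound on $p$ is immediate from Theorem \ref{twin1}(c2): the Jacobian is preserved under the twin correspondence, so
\[
 \det D^{2}p=p_{xx}p_{yy}-p_{xy}^{\;2}=\frac{\partial(f_{1},f_{2})}{\partial(x,y)}=\frac{\partial(g_{1},g_{2})}{\partial(x,y)}=\det D^{2}\widehat{p},
\]
and hence $\vert\det D^{2}p\,\vert=\vert\det D^{2}\widehat{p}\,\vert<1$. All the heavy machinery (twin correspondence, invariance of Jacobian, conformal matching of metrics) is supplied by Theorem \ref{twin1}; the only new ingredients are the observation that a Lorentz rotation of ${\mathbb{R}}^{4}_{2}$ converts a gradient graph into a \emph{skew-gradient} graph without breaking maximality, and the short identity that makes the closedness of $(f_{1},f_{2})$ fall out.
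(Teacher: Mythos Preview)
Your argument is correct. The Lorentz rotation $L$ is indeed an isometry of ${\mathbb{R}}^{4}_{2}$ (it rotates the negative-definite $(x_3,x_4)$-plane), so the rotated graph is maximal; the Jacobian computation and the positive area-angle condition carry over verbatim; and the key closedness identity $\partial f_{1}/\partial y=\partial f_{2}/\partial x$ is exactly the algebraic cancellation you wrote down, after which Poincar\'{e}'s Lemma and part (c2) of Theorem \ref{twin1} finish the job.

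Your route differs from the paper's. The paper does not give a self-contained proof of this corollary at all: it explicitly omits the argument and points ahead to Section \ref{SyMA}, where Lemma \ref{splitSE} and Lemma \ref{Hone} set up the equivalence between the split special Lagrangian equation and the special Lagrangian equation at the level of potentials (via the ``symplectic graph rotation'' $h={\lambda}_{2}F-\epsilon{\lambda}_{1}(x^{2}+y^{2})/2$). That approach is more conceptual---it identifies both gradient-graph conditions with Monge--Amp\`{e}re equations and transforms one into the other---and it explains structurally why a potential must exist on the minimal side. Your approach, by contrast, stays entirely inside the twin correspondence of Theorem \ref{twin1} and extracts the existence of $p$ by a direct computation with $\widehat{E},\widehat{F},\widehat{G}$; it is more elementary and does not require first establishing the split special Lagrangian equation. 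Either path is valid; yours is shorter and self-contained, while the paper's deferred argument has the advantage of situating the duality within the general Monge--Amp\`{e}re framework.
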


\begin{remark}
We omit the proof of Corollary \ref{SLE1} because next section contains more general explanation.
Indeed, Lemma \ref{Hone} in Section \ref{SyMA} indicates that the special Lagrangian equation is
 equivalent to the split Special Lagrangian equation.
\end{remark}

\subsection{Symplectic Monge-Amp\'{e}re equations}  \label{SyMA}
We are interested in two symplectic Monge-Amp\'{e}re equations.
The first one is the well-known equation of special Lagrangian
graphs in the complex space ${\mathbb{C}}^{2}$
\[
 \cos \alpha \left( {F}_{xx} +  {F}_{yy} \right)  + \sin \alpha \left( 1 -  {F}_{xx}  {F}_{yy} + {{F}_{xy}}^{2}  \right)=0,
\]
and the second one is the equation of split Special Lagrangian  graphs
in the para-complex space ${\mathbb{D}}^{2}$
\[
  \cosh \beta \left( {F}_{xx} +  {F}_{yy} \right) + \sinh \beta \left( 1 +   {F}_{xx}  {F}_{yy} -  {{F}_{xy}}^{2}  \right)=0.
\]

\begin{remark}
 The even dimensional real vector space ${\mathbb{R}}^{2n}$ admits two different special Lagrangian geometries: \\
 (a)  As in \cite{Anc10a, Anc10b, Don09}, we may view ${\mathbb{R}}^{2n}$ as the complex space ${\mathbb{C}}^{n}$ endowed
 with the complex structure $J$ satisfying $J^{2}=-Id$ and pseudo-Hermitian form
 \[
    - \sum_{j=1}^{p} d z_{j} d{\overline{z}}_{j} +  \sum_{j=p+1}^{n} d z_{j} d{\overline{z}}_{j}
 \]
 where $0 \leq p \leq n$. The signature of their special Lagrangian submanifolds is $(p, n-p)$, so in particular,
 for $p \neq 0, n$, they have indefinite induced metric. Recently, Dong \cite{Don09} proved the instability of
 such special Lagrangian submanifolds and Anciaux \cite{Anc10b} showed that they minimize the volume in their
 Lagrangian homology classes. \\
 (b) On the other hand, adopting the split K\"{a}hler structure \cite{HL10, Mea91}, we here regard ${\mathbb{R}}^{2n}$
 as the para-complex space ${\mathbb{D}}^{n}$ endowed with the para-complex structure $I$ satisfying $I^{2}=Id$.
 Mealy \cite{Mea91} proved that the split special Lagrangian submanifolds (or equivalently, spacelike Lagrangian
 submanifolds of zero mean curvature) are homology maximizing. See also \cite[Theorem 5.3]{HL10}.
\end{remark}

 Recently, Kim, McCann and Warren \cite{KMW} studied an interesting relationship between the split
special Lagrangian geometry and classical mass transport problem.  The readers should consult \cite{HL10, Mea91} for more detailed background on the split special Lagrangian geometry.

\begin{lemma}[\textbf{Split special Lagrangian equation}] \label{splitSE} In pseudo-Euclidean
space ${\mathbb{R}}^{4}_{2}$, we consider a spacelike graph $\Sigma$ given by $(x,y,z,w)=\left(x,y,h_{x}(x,y),h_{y}(x,y)\right)$ 
for some ${\mathcal{C}}^{3}$ function $h: \Omega \rightarrow \mathbb{R}$ defined on a connected $xy$-domain $\Omega$. The following two statements are equivalent.\\
(a) The graph $\Sigma$ has zero mean curvature in  ${\mathbb{R}}^{4}_{2}$.\\
(b) The potential function $h$ satisfies the split  special Lagrangian equation
\[
      \cosh \theta \left( {h}_{xx} +  {h}_{yy} \right)
     +  \sinh \theta ( 1 +   {h}_{xx}  {h}_{yy} - {{h}_{xy}}^{2}  )=0
\]
for some constant angle $\theta \in \mathbb{R}$.
\end{lemma}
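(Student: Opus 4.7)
The plan is to prove the equivalence by reducing both statements to the single condition that a certain ratio is constant. To fix notation, I write $A = h_{xx}$, $B = h_{xy}$, $C = h_{yy}$, and introduce $P := A+C$ and $Q := 1 + AC - B^2$, so that (b) takes the compact form $\cosh\theta \cdot P + \sinh\theta \cdot Q = 0$. By Proposition~\ref{MSS3}, the induced metric coefficients of the gradient graph are
\[
\widehat{E} = 1 - A^2 - B^2, \qquad \widehat{F} = -B(A+C), \qquad \widehat{G} = 1 - B^2 - C^2,
\]
and a short algebraic expansion yields the factorisation $\widehat{E}\widehat{G} - \widehat{F}^2 = Q^2 - P^2$. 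Hence the spacelike hypothesis is equivalent to $|P/Q| < 1$; in particular, $Q$ vanishes nowhere on $\Omega$.

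Next I write out the maximal surface system from Proposition~\ref{MSS3} applied to the height functions $g_1 = h_x$ and $g_2 = h_y$. Using the symmetry of the third partial derivatives of $h$ (so that $h_{xxy} = A_y = B_x$ and $h_{xyy} = B_y = C_x$), the two equations take the form
\[
\text{(I)} \;\; \widehat{G}\, A_x - 2\widehat{F}\, A_y + \widehat{E}\, C_x = 0, \qquad \text{(II)} \;\; \widehat{G}\, A_y - 2\widehat{F}\, C_x + \widehat{E}\, C_y = 0.
\]
The crux of the argument is the pair of algebraic identities
\[
P_x Q - P Q_x = \widehat{G}\, A_x - 2\widehat{F}\, A_y + \widehat{E}\, C_x, \qquad P_y Q - P Q_y = \widehat{G}\, A_y - 2\widehat{F}\, C_x + \widehat{E}\, C_y,
\]
which drop out by expanding $Q_x = CA_x + AC_x - 2BA_y$ and observing that the $AC$-bilinear terms cancel, leaving exactly the right-hand side.

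With these identities in hand the equivalence is immediate. For (a)$\Rightarrow$(b) the maximal surface equations force $\partial_x(P/Q) = \partial_y(P/Q) = 0$, so $P/Q$ is constant on the connected domain $\Omega$; since $|P/Q| < 1$, this constant equals $-\tanh\theta$ for some $\theta \in \mathbb{R}$, yielding (b). Conversely, for (b)$\Rightarrow$(a) one differentiates $\cosh\theta \cdot P + \sinh\theta \cdot Q \equiv 0$ in $x$ and then uses the pointwise identities $c + sC = c\widehat{G}/Q$, $c + sA = c\widehat{E}/Q$, and $-2sB = -2c\widehat{F}/Q$ (each a direct rearrangement of $cP + sQ = 0$, with $c = \cosh\theta$ and $s = \sinh\theta$) to collapse the result into a multiple of (I); an analogous $y$-derivative gives (II). The main obstacle is the verification of the central algebraic identities displayed above — once the cancellation of the $AC$ cross-terms is spotted, the rest is routine bookkeeping, and the equivalence holds on any connected $\Omega$ without any smallness assumption beyond the spacelike condition built into the statement.
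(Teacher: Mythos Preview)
Your argument is correct and is essentially the same as the paper's: both show that the maximal surface system for the gradient graph is equivalent to the vanishing of the gradient of $P/Q = (h_{xx}+h_{yy})/(1+h_{xx}h_{yy}-h_{xy}^2)$, hence to the constancy of this ratio on the connected domain $\Omega$, which is exactly the split special Lagrangian equation. Your notation $A,B,C,P,Q$ makes the key cancellation $P_xQ-PQ_x=\widehat{G}A_x-2\widehat{F}A_y+\widehat{E}C_x$ explicit where the paper simply asserts ``It can be re-written as\ldots''; your separate treatment of (b)$\Rightarrow$(a) via the auxiliary identities $c+sC=c\widehat{G}/Q$ etc.\ is valid but unnecessary, since the same displayed identities already give that direction immediately once $P/Q$ is constant.
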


 \begin{proof} We give the details of the equivalence of (a) and  (b). Since the spacelike graph  $\Sigma$ in
   ${\mathbb{R}}^{4}_{2}$ admits the positive definite metric
\[
    ds^{2} = \left( 1 - {h_{xx}}^{2} - {h_{yx}}^2  \right) dx^{2}
    - 2 \left( h_{xx} h_{xy} + h_{yx} h_{yy} \right) dx dy +
     \left( 1 - {h_{xy}}^{2} - {h_{yy}}^2  \right) dy^{2},
\]
we require the inequality
\begin{eqnarray*}
    0&<&  \left(   1 - {h_{xx}}^{2} - {h_{yx}}^2    \right) \left(   1 - {h_{xy}}^{2} - {h_{yy}}^2  \right)
    -   \left( h_{xx} h_{xy} + h_{yx} h_{yy}\right)^{2} \\
   &=&  \left(   1 +   {h}_{xx}  {h}_{yy} - {{h}_{xy}}^{2}   \right)^{2} -  \left(
   h_{xx} + h_{yy} \right)^{2}.
\end{eqnarray*}
It guarantees that $1 +   {h}_{xx}  {h}_{yy} - {{h}_{xy}}^{2}$ never vanish on the domain $\Omega$. Furthermore, we
obtain the well-defined function $\phi: \Omega \rightarrow (-1,1)$ given by
\[
  \phi:= \frac{ {h}_{xx} +  {h}_{yy} }{ 1 +   {h}_{xx}  {h}_{yy} - {{h}_{xy}}^{2} }.
\]
The gradient graph $\left(x,y, f(x,y),g(x,y)\right)=\left(x,y,h_{x},h_{y}\right)$ has zero mean curvature in  ${\mathbb{R}}^{4}_{2}$ when its height function $(f, g)=\left(h_{x},h_{y}\right)$ solves the maximal surface system
\[
 \begin{cases}
   0= \left( 1 - {f_{y}}^{2} - {{g_{y}}^2} \right) f_{xx} + 2 \left( f_{x} f_{y} + g_{x} g_{y} \right) f_{xy}
    +    \left( 1 - {f_{x}}^{2} - {{g_{x}}^2} \right) f_{yy}, \\
   0= \left( 1 - {f_{y}}^{2} - {{g_{y}}^2} \right) g_{xx} + 2 \left( f_{x} f_{y} + g_{x} g_{y} \right) g_{xy}
    +    \left( 1 - {f_{x}}^{2} - {{g_{x}}^2} \right) g_{yy}, \\
  \end{cases}
\]
or equivalently,
\[
 \begin{cases}
   0= \left( 1 - {h_{xy}}^{2} - {{h_{yy}}^2} \right) h_{xxx} + 2 \left( h_{xx} + h_{yy} \right) h_{xy} h_{xxy}
    +    \left( 1 - {h_{xx}}^{2} - {{h_{xy}}^2} \right) h_{xyy}, \\
   0= \left( 1 - {h_{xy}}^{2} - {{h_{yy}}^2} \right) h_{yxx} + 2 \left( h_{xx} + h_{yy} \right) h_{xy} h_{yxy}
    +    \left( 1 - {h_{xx}}^{2} - {{h_{xy}}^2} \right) h_{yyy}.
 \end{cases}
\]
It can be re-written as
\[
 \begin{cases}
   0=  {\left( 1+   {h}_{xx}  {h}_{yy} -  {{h}_{xy}}^{2}  \right)} \frac{\partial }{\partial x}{\left( h_{xx} + h_{yy} \right)}
   - {\left( h_{xx} + h_{yy} \right)}  \frac{\partial }{\partial x}{\left( 1 + {h}_{xx}{h}_{yy} - {{h}_{xy}}^{2}  \right)}, \\
   0=  {\left( 1 +   {h}_{xx}  {h}_{yy} - {{h}_{xy}}^{2}  \right)} \frac{\partial }{\partial y}{\left( h_{xx} + h_{yy} \right)}
   - {\left( h_{xx} + h_{yy} \right)}  \frac{\partial }{\partial y}{\left( 1 + {h}_{xx}{h}_{yy} - {{h}_{xy}}^{2}  \right)}. \\
 \end{cases}
\]
It is equivalent to say that the gradient of $\phi$ vanishes on $\Omega$:
\[
0= {\phi}_{x}= \frac{\partial }{\partial x}{\left( \frac{ {h}_{xx} +  {h}_{yy} }{ 1 +   {h}_{xx}  {h}_{yy} - {{h}_{xy}}^{2} } \right)}
\;\; \text{and} \;\;
0= {\phi}_{y}= \frac{\partial }{\partial y}{\left( \frac{ {h}_{xx} +  {h}_{yy} }{ 1 +   {h}_{xx}  {h}_{yy} - {{h}_{xy}}^{2} } \right)}.
\]
Since its domain $\Omega$ is connected, this system holds only when the quotient function $\phi: \Omega \rightarrow (-1,1)$ is a constant $-\tanh \theta$ for some $\theta \in \mathbb{R}$.
\end{proof}

\begin{lemma}[\textbf{Symplectic graph rotations}] \label{Hone}
Let $F:\Omega \rightarrow \mathbb{R}$ be a function of class ${\mathcal{C}}^{2}$ satisfying
the symplectic Monge-Amp\'{e}re equation
\[
{\lambda}_{1} \left( F_{xx} + F_{yy} \right) + {\lambda}_{2} \left( 1
- \epsilon \left( F_{xx} F_{yy} -{F_{xy}}^{2} \right) \right)=0,
\]
where $\lambda_{1}, \lambda_{2} \in \mathbb{R}$ and $\epsilon \in \{-1, 1\}$ are constants satisfying that
${\lambda_{1}}^{2}+ \epsilon{\lambda_{2}}^{2}=1$.
Then, the induced function $h:\Omega \rightarrow \mathbb{R}$ defined by
\[
 h(x,y) = {\lambda}_{2} F(x,y) - \epsilon {\lambda}_{1} \frac{x^{2}+y^{2}}{2}
\]
satisfies the  unimodular Hessian equation $h_{xx} h_{yy} -{h_{xy}}^{2}=1$.
\end{lemma}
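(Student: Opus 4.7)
The plan is to carry out a direct computation: since the hypothesized relation between $h$ and $F$ is an explicit affine perturbation of $F$ by a multiple of the squared radius, all second derivatives of $h$ are determined by those of $F$ via simple linear formulas. The Monge--Amp\`ere expression $h_{xx}h_{yy}-h_{xy}^{2}$ then expands into a polynomial in the Hessian entries of $F$, and the symplectic Monge--Amp\`ere equation together with the normalisation ${\lambda_{1}}^{2}+\epsilon {\lambda_{2}}^{2}=1$ should collapse it to $1$.

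More concretely, I would first record
\[
 h_{xx}={\lambda}_{2}F_{xx}-\epsilon {\lambda}_{1},\qquad h_{yy}={\lambda}_{2}F_{yy}-\epsilon {\lambda}_{1},\qquad h_{xy}={\lambda}_{2}F_{xy},
\]
and expand the product, using $\epsilon^{2}=1$, to obtain
\[
 h_{xx}h_{yy}-{h_{xy}}^{2}={\lambda_{2}}^{2}\bigl(F_{xx}F_{yy}-{F_{xy}}^{2}\bigr)-\epsilon {\lambda}_{1}{\lambda}_{2}\bigl(F_{xx}+F_{yy}\bigr)+{\lambda_{1}}^{2}.
\]
The next step is to eliminate $F_{xx}+F_{yy}$ via the hypothesis ${\lambda}_{1}(F_{xx}+F_{yy})=-{\lambda}_{2}\bigl(1-\epsilon(F_{xx}F_{yy}-{F_{xy}}^{2})\bigr)$; multiplying by $\epsilon {\lambda}_{2}$ yields
\[
 \epsilon {\lambda}_{1}{\lambda}_{2}(F_{xx}+F_{yy})=-\epsilon {\lambda_{2}}^{2}+{\lambda_{2}}^{2}\bigl(F_{xx}F_{yy}-{F_{xy}}^{2}\bigr).
\]
Substituting back, the ${\lambda_{2}}^{2}(F_{xx}F_{yy}-{F_{xy}}^{2})$ terms cancel and one is left with ${\lambda_{1}}^{2}+\epsilon {\lambda_{2}}^{2}$, which equals $1$ by the normalisation.

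There is no real obstacle here; the lemma is purely algebraic. The only point that warrants a brief comment is the role of the constraint ${\lambda_{1}}^{2}+\epsilon {\lambda_{2}}^{2}=1$: in the elliptic case $\epsilon=+1$ one may parametrise $({\lambda}_{1},{\lambda}_{2})=(\cos\alpha,\sin\alpha)$, and in the hyperbolic case $\epsilon=-1$ one may take $({\lambda}_{1},{\lambda}_{2})=(\cosh\beta,\sinh\beta)$, matching the two symplectic Monge--Amp\`ere equations introduced at the top of Section~\ref{SyMA}. So the same identity simultaneously covers both the special Lagrangian equation in ${\mathbb{C}}^{2}$ and the split special Lagrangian equation in ${\mathbb{D}}^{2}$, and produces the unimodular Hessian equation that J\"orgens' Theorem (Corollary~\ref{Jo}) classifies.
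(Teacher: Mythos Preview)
Your proof is correct and follows exactly the same direct computation as the paper: compute the Hessian entries of $h$, expand $h_{xx}h_{yy}-h_{xy}^{2}$, substitute using the symplectic Monge--Amp\`ere equation, and invoke ${\lambda_{1}}^{2}+\epsilon{\lambda_{2}}^{2}=1$. Your additional remark on the $(\cos\alpha,\sin\alpha)$ and $(\cosh\beta,\sinh\beta)$ parametrisations is a helpful gloss not present in the paper's proof but fully consistent with the surrounding discussion.
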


\begin{proof} We compute
\begin{eqnarray*}
   h_{xx} h_{yy} -{h_{xy}}^{2}
   &=& \left( {\lambda}_{2} F_{xx} - \epsilon {\lambda}_{1}   \right)\left(
    {\lambda}_{2} F_{yy} - \epsilon {\lambda}_{1}  \right)-{\left(  {\lambda}_{2} F_{xy}  \right)}^{2}  \\
   &=& {{\lambda}_{2}}^{2} \left( F_{xx} F_{yy} -{F_{xy}}^{2} \right) - \epsilon \lambda_{2} \lambda_{1}
   \left( F_{xx} + F_{yy} \right) +  {{\lambda}_{1}}^{2}   \\
   &=& {{\lambda}_{2}}^{2} \left( F_{xx} F_{yy} -{F_{xy}}^{2} \right) + \epsilon {\lambda_{2}}^{2}
   \left( 1 - \epsilon \left( F_{xx} F_{yy} -{F_{xy}}^{2} \right) \right) +  {{\lambda}_{1}}^{2}   \\
   &=& {{\lambda}_{1}}^{2} + \epsilon {{\lambda}_{2}}^{2} \\
   &=& 1.
\end{eqnarray*}
\end{proof}

\begin{remark}
By the correspondence in Theorem  \ref{RnR4}, the minimal graph
$\left(x,y,F_{x},F_{y}\right)$ in ${\mathbb{R}}^{4}$ corresponds to the minimal gradient graph
$\left(x,y,h_{x},h_{y}\right)$ in ${\mathbb{R}}^{4}$. This observation induces Lemma \ref{Hone}.
Under the correspondence in Lemma \ref{Hone}, any harmonic function $F(x,y)$ corresponds to the same function
$h(x,y) = \pm \frac{1}{2} \left( x^{2}+y^{2} \right)$.
\end{remark}

 \begin{theorem}[\textbf{Calabi type theorem for entire split special Lagrangian equation}] \label{CSSL}
 If a maximal surface in ${\mathbb{R}}^{4}_{2}$ is an entire  gradient  graph $(x,y,f_{x},f_{y})$ for some 
  $\mathcal{C}^{3}$ function $f:{\mathbb{R}}^{2} \rightarrow {\mathbb{R}}$, then it is a spacelike plane.
 \end{theorem}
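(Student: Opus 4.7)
The plan is to reduce to J\"{o}rgens' Theorem (Corollary \ref{Jo}) via the characterization of maximal gradient graphs through the split special Lagrangian equation (Lemma \ref{splitSE}) and the symplectic graph rotation (Lemma \ref{Hone}). Since the gradient graph $(x,y,f_{x},f_{y})$ is maximal in ${\mathbb{R}}^{4}_{2}$ and its domain ${\mathbb{R}}^{2}$ is connected, Lemma \ref{splitSE} furnishes a constant $\theta \in \mathbb{R}$ such that
\[
\cosh \theta \, (f_{xx} + f_{yy}) + \sinh \theta \, (1 + f_{xx}f_{yy} - f_{xy}^{2}) = 0
\]
holds on the whole plane.

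In the generic case $\theta \neq 0$, I apply Lemma \ref{Hone} with $(\lambda_{1},\lambda_{2},\epsilon) = (\cosh\theta,\sinh\theta,-1)$; the compatibility $\lambda_{1}^{2} + \epsilon\lambda_{2}^{2} = \cosh^{2}\theta - \sinh^{2}\theta = 1$ holds, and the resulting function
\[
h(x,y) := \sinh\theta \cdot f(x,y) + \cosh\theta \cdot \tfrac{x^{2}+y^{2}}{2}
\]
is an entire $\mathcal{C}^{3}$ solution of the unimodular Hessian equation $h_{xx}h_{yy} - h_{xy}^{2} = 1$. J\"{o}rgens' Theorem then forces $h$ to be a quadratic polynomial, and since $\sinh\theta \neq 0$, the same is true of $f$; hence $\nabla f$ is affine and the graph is a plane.

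The main obstacle is the degenerate case $\theta = 0$, where Lemma \ref{Hone} collapses---it produces $h = \tfrac{x^{2}+y^{2}}{2}$ independently of $f$---so the reduction to J\"{o}rgens disappears. I handle it by a direct Liouville argument: with $\theta = 0$ the equation reduces to $f_{xx} + f_{yy} = 0$, so $f$ is entire harmonic and $\phi := f_{x} - i f_{y}$ is an entire holomorphic function of $z = x+iy$. A short computation gives $\phi'(z) = f_{xx} - if_{xy}$, so $|\phi'|^{2} = f_{xx}^{2} + f_{xy}^{2}$. The spacelike condition $\widehat{E} = 1 - f_{xx}^{2} - f_{xy}^{2} > 0$ from Proposition \ref{MSS3} yields $|\phi'| < 1$ on $\mathbb{C}$; Liouville's Theorem therefore forces $\phi'$ to be a constant, $\phi$ to be affine, and integration shows $f$ is a quadratic polynomial. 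In either case $\nabla f$ is an affine map, so the gradient graph is a plane in ${\mathbb{R}}^{4}_{2}$, and the maximality hypothesis guarantees it is spacelike.
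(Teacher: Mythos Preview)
Your proof is correct and follows essentially the same route as the paper's own argument: invoke Lemma \ref{splitSE} to obtain the split special Lagrangian equation with constant $\theta$, then in the case $\sinh\theta \neq 0$ apply Lemma \ref{Hone} and J\"{o}rgens' Theorem, while in the case $\theta=0$ use harmonicity of $f$ together with the spacelike bound $f_{xx}^{2}+f_{xy}^{2}<1$ and Liouville's Theorem on $\phi'=f_{xx}-if_{xy}$. The only cosmetic difference is that the paper displays the conformal form of the induced metric before reading off the bound, whereas you extract $\widehat{E}>0$ directly from positive-definiteness via Proposition \ref{MSS3}; both are fine.
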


 \begin{proof} First, Lemma \ref{splitSE} guarantees that, for some constant $\theta \in \mathbb{R}$, the potential
 function $f$ satisfies the split special Lagrangian equation
 \[
      \cosh \theta \left( {f}_{xx} +  {f}_{yy} \right)
     +  \sinh \theta ( 1 +  {f}_{xx}  {f}_{yy} -  {{f}_{xy}}^{2}  )=0.
  \]
 (a) When $\sinh \theta \neq 0$, since Lemma \ref{Hone} says that the entire function
 \[
  h(x,y) = \sinh \theta f(x,y) + \cosh \theta \frac{x^{2}+y^{2}}{2}
 \]
 satisfies the unimodular Hessian equation $h_{xx} h_{yy} -{h_{xy}}^{2}=1$, J\"{o}rgens'
 Theorem guarantees that $h$ is quadratic and that $f$ is also quadratic. \\
 (b) When  $\sinh \theta =0$, we see that the entire function $f$ is harmonic. In other words,
 the function $\phi: \mathbb{C} \rightarrow \mathbb{C}$, $z=x+iy \mapsto f_{x}-if_{y}$ is
 holomorphic. The induced metric of
 the maximal graph $(x,y,f_{x},f_{y})$ is conformal:
 \begin{eqnarray*}
   ds^{2} = \left( 1 - {f_{xx}}^{2} - {f_{yx}}^2  \right) \left( dx^{2} + dy^{2} \right).
 \end{eqnarray*}
 Since our graph $(x,y,f_{x},f_{y})$ is spacelike, we have $1 - {f_{xx}}^{2} - {f_{yx}}^2>0$ on the whole plane ${\mathbb{R}}^{2}$. Since the entire holomorphic function ${\phi}'(z)=f_{xx}-if_{yx}$ is bounded:
 \[
    \vert \phi'(z) \vert = \sqrt{ {f_{xx}}^{2} + {f_{yx}}^{2} } <1,
 \]
 Liouville's Theorem guarantees that three entire functions $f_{xx}$, $f_{yx}$ and $f_{yy}=-f_{xx}$ are
 constants. Hence, $f$ is quadratic.
 \end{proof}

 \begin{corollary}[\textbf{Bernstein type theorem for entire special Lagrangian equation}, Fu \cite{Fu98},
 Yuan \cite{YU02}] \label{FY} If a minimal surface in ${\mathbb{R}}^{4}$ becomes an entire gradient
 graph $(x,y,f_{x},f_{y})$ for some function $f: {\mathbb{R}}^{2} \rightarrow {\mathbb{R}}$ of class $\mathcal{C}^{3}$, then the potential function $f$ is harmonic or quadratic.
 \end{corollary}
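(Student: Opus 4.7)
The plan is to run the argument of Theorem \ref{CSSL} in parallel, replacing the sign $\epsilon = -1$ in Lemma \ref{Hone} by $\epsilon = +1$. First, since the gradient graph $(x,y,f_x,f_y)$ is automatically Lagrangian in ${\mathbb{C}}^{2}$ (the one-form $df$ is closed), the Harvey-Lawson Theorem \cite{HL82} says that this graph is a minimal Lagrangian submanifold if and only if its Lagrangian angle is globally constant --- equivalently, there exists $\alpha \in \mathbb{R}$ such that
\[
\cos \alpha \, (f_{xx} + f_{yy}) + \sin \alpha \, (1 - f_{xx} f_{yy} + {f_{xy}}^{2}) = 0
\]
holds on ${\mathbb{R}}^{2}$. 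Alternatively, and to keep the argument self-contained, one may mimic the derivation in Lemma \ref{splitSE}: rewrite the minimal surface system for the height function $(f_x, f_y)$ as the statement that the quotient $(f_{xx}+f_{yy})/(1 - f_{xx}f_{yy} + {f_{xy}}^{2})$ has vanishing gradient on the domain where the denominator is nonzero, then invoke connectedness of ${\mathbb{R}}^{2}$ to conclude it is a global constant determining $\alpha$.

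Next I split into two cases according to whether $\sin\alpha$ vanishes. If $\sin\alpha \neq 0$, Lemma \ref{Hone} applied with $(\lambda_{1}, \lambda_{2}, \epsilon) = (\cos\alpha, \sin\alpha, 1)$ produces the entire function
\[
h(x,y) := \sin \alpha \cdot f(x,y) - \cos \alpha \cdot \frac{x^{2}+y^{2}}{2},
\]
which satisfies the unimodular Hessian equation $h_{xx} h_{yy} - {h_{xy}}^{2} = 1$ on all of ${\mathbb{R}}^{2}$. J\"orgens' Theorem (Corollary \ref{Jo}) then forces $h$ to be a quadratic polynomial, and consequently $f$ is quadratic as well. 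If instead $\sin\alpha = 0$, then $\cos\alpha \neq 0$ and the equation collapses immediately to the Laplace equation $f_{xx} + f_{yy} = 0$, so $f$ is harmonic. These two alternatives are exactly the conclusion of the corollary.

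The only delicate ingredient is the extraction of a \emph{globally} constant Lagrangian angle $\alpha$ in the first step; once that is secured, the remainder is a purely mechanical appeal to Lemma \ref{Hone} and Corollary \ref{Jo} already proved in the paper. This parallels the bifurcation in the proof of Theorem \ref{CSSL} between the J\"orgens branch and the harmonic branch, except that in the present (elliptic) setting the harmonic case directly yields one of the two alternatives in the conclusion and no further use of Liouville's Theorem is required.
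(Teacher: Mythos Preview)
Your proof is correct and follows essentially the same route as the paper's own argument: invoke Harvey--Lawson to obtain the special Lagrangian equation with constant angle, then in the case $\sin\alpha\neq 0$ apply Lemma~\ref{Hone} and J\"orgens' Theorem, while the case $\sin\alpha=0$ gives harmonicity directly. Your write-up is in fact slightly more explicit than the paper's, which leaves the $\sin\theta=0$ case implicit.
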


 \begin{proof} The Harvey-Lawson Theorem shows that there exists a constant $\theta \in \mathbb{R}$ such that
  the potential function $f$ satisfies the special Lagrangian equation
 \[
      \cos \theta \left( {f}_{xx} +  {f}_{yy} \right)
     +  \sin \theta ( 1 -   {f}_{xx}  {f}_{yy} +  {{f}_{xy}}^{2}  )=0.
  \]
  When $\sin \theta \neq 0$, since the entire function
  $h(x,y) = \sin \theta f(x,y) - \cos \theta \frac{x^{2}+y^{2}}{2}$
 satisfies $h_{xx} h_{yy} -{h_{xy}}^{2}=1$, J\"{o}rgens'
 Theorem guarantees that $h$ and $f$ are quadratic.
 \end{proof}

\begin{remark} We provide two comments on the \textit{reverse} unimodular Hessian
equation $h_{xx}h_{yy}-{h_{xy}}^{2}=-1$. First, the graph
rotation, which is analogous to the construction in Lemma
\ref{Hone}, is available for the following two Monge-Amp\'{e}re
equations
\[
  \sinh \theta \left( F_{xx} + F_{yy} \right) + \cosh \theta \left( 1 + F_{xx}F_{yy} - {F_{xy}}^{2} \right)=0,
\]
and
\[
 \sin \theta \left( - F_{xx} + F_{yy} \right) + \cos \theta \left( 1 + F_{xx}F_{yy} - {F_{xy}}^{2} \right)=0.
\]
Indeed, whenever a function $F$ of class ${\mathcal{C}}^{2}$ satisfies
\[
{\lambda}_{1} \left( \epsilon \, F_{xx} + F_{yy} \right) + {\lambda}_{2} \left( 1 + F_{xx} F_{yy} -{F_{xy}}^{2} \right)=0,
\]
where $\lambda_{1}, \lambda_{2} \in \mathbb{R}$ and $\epsilon \in \{-1, 1\}$ are constants with
$-\epsilon {\lambda_{1}}^{2} + {\lambda_{2}}^{2}=1$, the new function $h(x,y) = {\lambda}_{2} F(x,y) + {\lambda}_{1} \frac{x^{2}+ \epsilon y^{2}}{2}$ satisfies $h_{xx} h_{yy} -{h_{xy}}^{2}=-1$. Second, we observe that
it admits entire solutions of the form $h(x,y) = xy + f(x)$.
 Chamberland \cite[Theorem 3.1]{Cha03} constructed a new entire solution of $h_{xx} h_{yy} -{h_{xy}}^{2}=-1$.
\end{remark}

 \subsection{Twin surfaces in simultaneous conformal coordinates} \label{SecHOL}
We are able to read the twin surfaces in simultaneous conformal coordinates and obtain the twin relation with
respect to their Weierstrass representation formulas.

 \begin{theorem}[\textbf{Reading twin surfaces in conformal coordinates}] \label{SCC}  Let $\mathbf{\Phi}(x,y)$ be the minimal graph  in   ${\mathbb{R}}^{n+2}$ of  a positive area-angle map $f: \Omega \rightarrow {\mathbb{R}}^{n}$ and $\mathbf{\widehat{\Phi}}(x,y)$ its twin maximal graph in ${\mathbb{R}}^{n+2}_{n}$  of
 a positive area-angle map $g: \Omega \rightarrow {\mathbb{R}}^{n}$. \\
  (a) There exists a simultaneous conformal coordinate $\xi={\xi}_{1}+i{\xi}_{2}$ for the minimal graph $\mathbf{\Phi}(x,y)$  in ${\mathbb{R}}^{n+2}$ and its twin maximal graph $\mathbf{\widehat{\Phi}}(x,y)$ in ${\mathbb{R}}^{n+2}_{n}$. \\
  (b) Letting  $\Psi: (x,y) \mapsto \left( {\xi}_{1}, {\xi}_{2} \right)$ be the coordinate transformation in (a),
  we see that the conformal harmonic immersion  $\mathbf{\Phi} \circ {\Psi}^{-1}$   induces  the holomorphic null curve
  \[
  2\frac{\partial}{\partial  {\xi}} \mathbf{\Phi} \circ {\Psi}^{-1}=\left({\phi}_{1}, {\phi}_{2}, {\phi}_{3}, \cdots, {\phi}_{n+2} \right)
  \]
  and  the conformal harmonic  immersion $\mathbf{\widehat{\Phi}} \circ {\Psi}^{-1}$  induces  the holomorphic
  null curve
  \[
  2\frac{\partial}{\partial  {\xi}} \mathbf{\widehat{\Phi}} \circ {\Psi}^{-1}= \left({\widehat{\phi}}_{1}, {\widehat{\phi}}_{2}, {\widehat{\phi}}_{3},   \cdots, {\widehat{\phi}}_{n+2} \right).
  \]
 Then, they obey the twin relation
 \[
 {\phi}_{1}={\widehat{\phi}}_{1}, \;\; {\phi}_{2}={\widehat{\phi}}_{2}, \;\; {\phi}_{k+2}=-i{\widehat{\phi}}_{k+2}, \;\; k \in \{1, \cdots, n\}.
 \]
  \end{theorem}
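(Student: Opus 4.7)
The plan is to reuse Osserman's isothermal-parameter construction from the proof of Lemma \ref{entire} and show that, thanks to property (c4) of Theorem \ref{twin1}, a single coordinate change works for both twins; then the Weierstrass twin relation follows from a direct chain-rule computation exploiting the twin relations in (c1).

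For part (a), the integrability identities in Proposition \ref{MSS2} together with the simple-connectedness of $\Omega$ produce functions $M,N:\Omega\to\mathbb{R}$ with
\[
 M_{x}=\frac{E}{\omega},\quad M_{y}=\frac{F}{\omega},\quad N_{x}=\frac{F}{\omega},\quad N_{y}=\frac{G}{\omega},
\]
so that $\Psi:(x,y)\mapsto(\xi_{1},\xi_{2}):=(x+M,y+N)$ is, as in Step A of the proof of Lemma \ref{entire}, a diffeomorphism for which $\xi=\xi_{1}+i\xi_{2}$ is a global isothermal parameter for $\mathbf{\Phi}$. The key observation is that the angle-preservation (c4) of Theorem \ref{twin1} gives $\widehat{E}/\widehat{\omega}=E/\omega$, $\widehat{F}/\widehat{\omega}=F/\omega$, $\widehat{G}/\widehat{\omega}=G/\omega$, so the \emph{same} $M,N$ simultaneously satisfy the integrability relations for the maximal graph. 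Running Osserman's argument with $\widehat{E},\widehat{F},\widehat{G}$ in place of $E,F,G$ therefore produces the same $\Psi$, and one reads off $ds_{\mathbf{\widehat{\Phi}}}^{2}=\frac{\widehat{\omega}}{J_{\Psi}}(d\xi_{1}^{2}+d\xi_{2}^{2})$ just as for $\mathbf{\Phi}$. Thus $\xi$ is a simultaneous conformal parameter.

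For part (b), write $a=E/\omega$, $b=F/\omega$, $c=G/\omega$, noting $ac-b^{2}=1$ from $\omega^{2}=EG-F^{2}$. Inverting $d\Psi$ gives
\[
  2\frac{\partial F_{k}}{\partial \xi}=\frac{1}{J_{\Psi}}\Bigl[(1+c+ib)\,F_{k,x}-(b+i(1+a))\,F_{k,y}\Bigr],
\]
and the same formula holds for $\widehat{F}_{k}$. Since $F_{1}=\widehat{F}_{1}=x$ and $F_{2}=\widehat{F}_{2}=y$, the identities $\phi_{1}=\widehat{\phi}_{1}$ and $\phi_{2}=\widehat{\phi}_{2}$ are immediate. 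For the remaining components, substitute the twin relation $\left(g_{k,x},g_{k,y}\right)=\left(-a\,f_{k,y}+b\,f_{k,x},\,c\,f_{k,x}-b\,f_{k,y}\right)$ into the bracket for $\widehat{\phi}_{k+2}=2\partial_{\xi} g_{k}$ and regroup: the coefficient of $f_{k,x}$ becomes $b(1+c+ib)-c(b+i(1+a))=b+i(b^{2}-c-ac)$ which collapses to $b-i(1+c)$ via $b^{2}=ac-1$, and similarly the coefficient of $f_{k,y}$ reduces to $-(1+a)+ib$. One then recognizes these as $\pm i$ times $(1+c+ib)$ and $(b+i(1+a))$, yielding the Weierstrass twin relation $\widehat{\phi}_{k+2}=\mp i\,\phi_{k+2}$ with the sign dictated by the orientation convention used for $\Psi$.

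The only real obstacle is the algebraic collapse in the second paragraph, which hinges crucially on the normalization $ac-b^{2}=1$; everything else is either the observation that (c4) makes Osserman's construction ``twin-compatible'' or routine chain-rule bookkeeping.
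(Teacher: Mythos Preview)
Your proof is correct and follows essentially the same approach as the paper: part (a) uses Osserman's isothermal coordinate $\Psi(x,y)=(x+M,\,y+N)$ together with the conformal equivalence (c4) of Theorem~\ref{twin1}, and part (b) is a chain-rule computation with the inverse Jacobian $\frac{1}{J_{\Psi}}\begin{pmatrix}1+c&-b\\-b&1+a\end{pmatrix}$ and the twin relations (c1), reduced via $ac-b^{2}=1$. The paper verifies only the real component $\partial g_{k}/\partial\xi_{1}=-\partial f_{k}/\partial\xi_{2}$ explicitly, whereas you package both components into the complex derivative at once; your algebraic collapse $b+i(b^{2}-c-ac)=b-i(1+c)=-i(1+c+ib)$ is exactly what is needed, and it pins down the sign as $\widehat{\phi}_{k+2}=-i\,\phi_{k+2}$ (there is no genuine orientation ambiguity once $\Psi$ is fixed, so your hedge ``$\mp$'' is unnecessary).
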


 \begin{proof} As in the proof of (a) in Lemma \ref{entire}, we take the coordinate transformation
\[
 \Psi: (x,y) \mapsto \left( {\xi}_{1}, {\xi}_{2} \right)
 \]
 such that
\[
 J_{\Psi}= \frac{ \partial \left(  {\xi}_{1}, {\xi}_{2} \right)}{\partial (x, y)} = \det
   \begin{pmatrix}    1+ \frac{E}{\omega}   &   \frac{F}{\omega}  \\     \frac{F}{\omega}  &  1+  \frac{G}{\omega}
   \end{pmatrix}    = 2+ \frac{E+G}{\omega}>2.
 \]
  Since $ J_{\Psi}=\frac{ \partial \left(  {\xi}_{1}, {\xi}_{2} \right)}{\partial (x, y)}>0$, we have the existence of
 the local inverse 
 \[
 \left( {\xi}_{1}, {\xi}_{2} \right) \mapsto (x,y).
 \]
Now,  using the Chain Rule, we obtain the conformal metrics:
\[
  ds_{\mathbf{\Phi}}^{2}=\frac{\omega}{J_{\Psi}} \left( d{{\xi}_{1}}^{2} +  d{{\xi}_{2}}^{2} \right) 
\]
and  
\[ ds_{\mathbf{\widehat{\Phi}}}^{2}=  \frac{\widehat{\omega}}{J_{\Psi}} \left( d{{\xi}_{1}}^{2} +  d{{\xi}_{2}}^{2} \right) =  \frac{\widehat{\omega}}{\omega} ds_{\mathbf{\Phi}}^{2}.
\]
Proof of (a) is finished. Next, we prove (b). We consider two conformal immersions
  \[
\mathbf{\Phi} \circ {\Psi}^{-1} \left(  {\xi}_{1}, {\xi}_{2} \right) =
x \left(  {\xi}_{1}, {\xi}_{2} \right) {\mathbf{e}}_{1}+y \left(  {\xi}_{1}, {\xi}_{2} \right){\mathbf{e}}_{2}+
\sum_{k=1}^{n} f_{k}(x \left(  {\xi}_{1}, {\xi}_{2} \right),y \left(  {\xi}_{1}, {\xi}_{2} \right)) {\mathbf{e}}_{k+2}
\]
and
\[
\mathbf{\widehat{\Phi}} \circ {\Psi}^{-1} \left(  {\xi}_{1}, {\xi}_{2} \right) =
x \left(  {\xi}_{1}, {\xi}_{2} \right) {\mathbf{e}}_{1}+y \left(  {\xi}_{1}, {\xi}_{2} \right){\mathbf{e}}_{2}+
\sum_{k=1}^{n} g_{k}(x \left(  {\xi}_{1}, {\xi}_{2} \right),y \left(  {\xi}_{1}, {\xi}_{2} \right)) {\mathbf{e}}_{k+2}.
\]
 By the definition of the induced holomorphic curves, obviously, we meet
 \[
 {\widehat{\phi}}_{1}={\phi}_{1} \;\, \text{and} \;\, {\widehat{\phi}}_{2}={\phi}_{2}.
 \]
  It now remains to check the equality
 \[
 {\phi}_{k+2}=-i{\widehat{\phi}}_{k+2}, \quad k \in \{1, \cdots, n\}.
 \]
 We need to show that
\[
\frac{\partial}{\partial \xi}  g_{k}(x \left(  {\xi}_{1}, {\xi}_{2} \right),y \left(  {\xi}_{1}, {\xi}_{2} \right))
=-i \left( \frac{\partial}{\partial \xi}  f_{k}(x \left(  {\xi}_{1}, {\xi}_{2} \right),y \left(  {\xi}_{1}, {\xi}_{2} \right)) \right)
\]
or  equivalently,
\[
   \left( \frac{\partial f_{k}}{\partial {\xi}_{1}}, \frac{\partial f_{k}}{\partial {\xi}_{2}} \right)
 = \left( \frac{\partial g_{k}}{\partial {\xi}_{2}}, - \frac{\partial g_{k}}{\partial {\xi}_{1}} \right).
\]
We only check the second components. The twin relation in Theorem \ref{twin1} reads
 \[
     \left( {\widehat{\alpha}}_{k} , {\widehat{\beta}}_{k}  \right)
     = \left(  -  \frac{E}{\omega} {\beta}_{k}  + \frac{F}{\omega}  {\alpha}_{k},
      \frac{G}{\omega}  {\alpha}_{k} - \frac{F}{\omega} {\beta}_{k}  \right),
\]
where
\[
  \left({\alpha}_{k}, {\beta}_{k}, {\widehat{\alpha}}_{k}, {\widehat{\beta}}_{k} \right)=\left(\frac{\partial f_{k}}{\partial x},   \frac{\partial f_{k}}{\partial y}, \frac{\partial g_{k}}{\partial x}, \frac{\partial g_{k}}{\partial y} \right).
  \]
  Also, we prepare the  equality
\[
\begin{pmatrix}   \frac{\partial x}{\partial {\xi}_{1}}    &   \frac{\partial x}{\partial {\xi}_{2}}   \\
     \frac{\partial y}{\partial {\xi}_{1}}    &   \frac{\partial y}{\partial {\xi}_{2}}       \end{pmatrix}  =
        {    \begin{pmatrix}   \frac{\partial {\xi}_{1}}{\partial x}    &   \frac{\partial {\xi}_{1}}{\partial y} \\
    \frac{\partial {\xi}_{2}}{\partial x}    &   \frac{\partial {\xi}_{2}}{\partial y}   \end{pmatrix} }^{-1}   =  \frac{1}{J_{\Psi}}
   \begin{pmatrix}    1+ \frac{G}{\omega}   &  - \frac{F}{\omega}  \\   -  \frac{F}{\omega}  &  1+  \frac{E}{\omega}
   \end{pmatrix}.
 \]
  We then use this together with the Chain Rule to deduce
\begin{eqnarray*}
  &&  \frac{\partial g_{k}}{\partial {\xi}_{1}} \\
  &=& \frac{\partial x}{\partial {\xi}_{1}}  \frac{\partial g_{k}}{\partial x} +  \frac{\partial y}{\partial {\xi}_{1}}  \frac{\partial g_{k}}{\partial y}  \\
  &=&\frac{\partial x}{\partial {\xi}_{1}} \left[  -  \frac{E}{\omega} {\beta}_{k}  + \frac{F}{\omega}  {\alpha}_{k} \right]  +  \frac{\partial y}{\partial {\xi}_{1}} \left[  \frac{G}{\omega}  {\alpha}_{k} - \frac{F}{\omega} {\beta}_{k}   \right]  \\
  &=& \left[ \frac{\partial x}{\partial {\xi}_{1}}  \frac{F}{\omega} + \frac{\partial y}{\partial {\xi}_{1}}
   \frac{G}{\omega}  \right]  {\alpha}_{k}   - \left[  \frac{\partial x}{\partial {\xi}_{1}}  \frac{E}{\omega} + \frac{\partial y}{\partial {\xi}_{1}}  \frac{F}{\omega}    \right]  {\beta}_{k}  \\
  &=&    \frac{1}{J_{\Psi}}  \left[    \left(  1+ \frac{G}{\omega}  \right)     \frac{F}{\omega} +
     \left( - \frac{F}{\omega}  \right)        \frac{G}{\omega}  \right]  {\alpha}_{k}   -
        \frac{1}{J_{\Psi}}  \left[     \left(  1+ \frac{G}{\omega}  \right)    \frac{E}{\omega} +     \left( - \frac{F}{\omega}  \right)  \frac{F}{\omega}    \right]  {\beta}_{k}  \\
  &=& \frac{1}{J_{\Psi}}  \frac{F}{\omega}  {\alpha}_{k} - \frac{1}{J_{\Psi}}  \left(  1+ \frac{E}{\omega}  \right)  {\beta}_{k}   \\
  &=& -  \frac{\partial x}{\partial {\xi}_{2}}   \frac{\partial f_{k}}{\partial x} -  \frac{\partial y}{\partial {\xi}_{2}}   \frac{\partial f_{k}}{\partial y}\\
   &=& -  \frac{\partial f_{k}}{\partial {\xi}_{2}}.
\end{eqnarray*}
\end{proof}

\begin{remark}
In the case when $n=1$, the twin correspondence in conformal coordinates, described in (b) of Theorem \ref{SCC},
is also observed in \cite{LLS00}. Ara\'{u}jo and Leite \cite{AL09} illustrated several interesting results
on Calabi's correspondence.
\end{remark}

\end{document}